\newcommand{\cH}{\mathcal{H}}
\newcommand{\cN}{\mathcal{N}}
\newcommand{\cP}{\mathcal{P}}
\newcommand{\bN}{\mathbb{N}}
\newcommand{\bR}{\mathbb{R}}
\newtheorem{theorem}{Theorem}[section]
\newtheorem{lemma}[theorem]{Lemma}
\theoremstyle{definition}
\newtheorem{definition}[theorem]{Definition}
\newtheorem{example}[theorem]{Example}
\newtheorem{inductive lemma}[theorem]{Inductive Lemma}
\newtheorem{remark}[theorem]{Remark}
\title[]{\Large Refinements of the holonomic approximation lemma}
\begin{document}
\maketitle

\begin{center}
\Large  Daniel \'Alvarez-Gavela \let\thefootnote\relax\footnote{The author was partially supported by NSF grant DMS-1505910.} \\  $ $  \\
\large \itshape Stanford University
\end{center}
\begin{abstract}
The holonomic approximation lemma of Eliashberg and Mishachev is a powerful tool in the philosophy of the $h$-principle. By carefully keeping track of the quantitative geometry behind the holonomic approximation process, we establish several refinements of this lemma. Gromov's idea from convex integration of working `one pure partial derivative at a time' is central to the discussion. We give applications of our results to flexible symplectic and contact topology.
 \end{abstract}
\onehalfspacing
\tableofcontents

\section{Introduction and main results}
\subsection{Classical holonomic approximation}\label{Classical holonomic approximation}
We begin by briefly recalling the holonomic approximation lemma, which is the starting point of our paper. Given a fibre bundle $p:X \to M$, where $X$ and $M$ are smooth manifolds, consider the bundle $p^r: X^{(r)} \to M$ of $r$-jets of $p$. The fibre of $p^r$ over a point $x \in M$ consists of equivalence classes of germs of sections $h:Op(x)  \to X$ of $p$, where two germs are identified if they agree up to order $r$ at the point $x$. Throughout we use Gromov's notation $Op(A)$ to denote an arbitrarily small but unspecified open neighborhood of a subset $A \subset M$. If $X=M \times N$ is a trivial bundle, then sections of $p$ are the same as maps from $M$ to $N$ and we usually write $X^{(r)}=J^r(M,N)$. 

Given a section $h:U \to X$ of $p$ defined over an open subset $U \subset M$, we denote by $j^{r}(h):U \to X^{(r)}$ the section of $p^r$ consisting of the $r$-jets of $h$. Sections of $p^r$ of the form $j^{r}(h)$ are called holonomic.  It is generally impossible to globally approximate an arbitrary section $s:M \to X^{(r)}$ of $p^r$ by a holonomic section. Nevertheless, the approximation can always be achieved in a deformed neighborhood of any reasonable stratified subset of positive codimension. For simplicity we restrict our discussion to the following class of stratified subsets.
 
 \begin{definition} A closed subset $K \subset M$ is called a polyhedron if it is a subcomplex of some smooth triangulation of $M$. 
 \end{definition}
 
 The classical holonomic approximation lemma was first stated and proved in \cite{EM01} by Eliashberg and Mishachev. They give numerous applications in their book \cite{EM02}. Holonomic approximation is closely related to the method of continuous sheaves discovered by Gromov in his thesis \cite{G69} and further explored in his book \cite{G86}. Both of these techniques greatly generalize the immersion theory of Smale-Hirsch-Phillips \cite{H59}, \cite{P67}, \cite{S59}. The precise statement that we wish to recall reads as follows.
 
 \begin{theorem}[holonomic approximation lemma]\label{classical holonomic approximation}
 Let $s:M \to X^{(r)}$ be a section of the $r$-jet bundle of a fibre bundle $p:X \to M$ and let $K \subset M$ be a polyhedron of positive codimension. Then there exists an isotopy $F_t:M \to M$ and a holonomic section $\hat{s}:Op\big(F_1(K) \big) \to X^{(r)}$ such that the following properties hold.
 \begin{itemize}
 \item $\hat{s}$ is $C^0$-close to $s$ on $Op\big(F_1(K)\big)$.
 \item $F_t$ is $C^0$-small.
 \end{itemize}
 \end{theorem}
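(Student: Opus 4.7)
The plan is to induct on the dimension of cells in a smooth triangulation of $M$ for which $K$ is a subcomplex. At stage $k$ the inductive hypothesis will provide a $C^0$-small isotopy $F_t^{(k)}$ of $M$ and a holonomic section $\hat{s}^{(k)}$ on $Op\big(F_1^{(k)}(K^{(k)})\big)$ that is $C^0$-close to $s$, where $K^{(k)}$ denotes the $k$-skeleton of $K$. The successive isotopies will be arranged to be supported near the newly processed cells, so as not to spoil earlier work.

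The base case $k=0$ requires no isotopy. At each vertex $v$, in local coordinates and a local trivialization of $X$, the jet $s(v)$ is represented by a polynomial of degree at most $r$; taking $h$ to be this polynomial yields a holonomic approximation on a sufficiently small neighborhood of $v$ by continuity of $s$. Choosing the neighborhoods of distinct vertices to be pairwise disjoint gives $\hat{s}^{(0)}$.

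For the inductive step, pick a $k$-cell $\sigma$ and work in local coordinates $(x_1,\ldots,x_n)$ in which $\sigma \subset \{x_n = 0\}$; this is possible because $K$ has positive codimension. The task is to extend the holonomic section defined on $Op(F_1^{(k-1)}(\partial \sigma))$ to a neighborhood of a small perturbation of $\sigma$. The central technique, due to Eliashberg and Mishachev, is the wavy wiggle: replace $\sigma$ by a perturbed cell $\sigma'$ whose $x_n$-coordinate is a wave along some tangential direction of small amplitude $\epsilon$ and even smaller wavelength $\delta \ll \epsilon$. Over a thin tubular neighborhood of $\sigma'$, define $h$ by an $x_n$-directional Taylor expansion of $s$ of order $r$ from $\sigma'$; the rapid oscillation of the tangent plane of $\sigma'$, combined with the thinness of the tube, controls the $C^0$-error of $j^r(h)$ against $s$. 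Patching with $\hat{s}^{(k-1)}$ is done via a partition of unity in a collar of $\partial \sigma'$.

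The main obstacle is the quantitative coordination of scales. The amplitude $\epsilon$ controls the $C^0$-size of the isotopy and must be chosen first, below the prescribed tolerance. The wavelength $\delta$ must then be chosen much smaller than $\epsilon$ so that the tangent plane of $\sigma'$ oscillates rapidly enough to absorb the ``integrability defect'' between the jet values prescribed by $s$ and the actual derivatives of any candidate $h$ along $\sigma'$. The tube thickness must be chosen yet smaller, so that the Taylor remainder sits within the $C^0$-tolerance, and finally the partition-of-unity collar must be thin enough not to reintroduce a larger error. Once these parameters are coordinated for one cell, a standard argument using pairwise disjoint neighborhoods processes all $k$-cells in parallel, and iterating from $k = 1$ up to $k = \dim K$ produces the desired global $F_t$ and $\hat{s}$.
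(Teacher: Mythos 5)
Your proposal is correct and is essentially the Eliashberg--Mishachev argument that the paper quotes for this statement (it cites \cite{EM01}, \cite{EM02} rather than reproving it, and recovers it as the $l=0$ case of Theorem \ref{Approximation of $l$-holonomic sections}). The paper's own refined proofs in Sections \ref{Localization of the problem}--\ref{holonomic approximation with controlled cutoff} follow exactly your scheme --- induction over the skeleton with holonomic trivialization to localize, a sinusoidal wiggle of amplitude $\varepsilon$ and wavelength $\delta \ll \varepsilon$, Taylor-polynomial interpolation along the wiggle, and a cutoff whose derivatives are balanced against the approximation error --- so no further comparison is needed.
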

 \begin{remark} $ $
 \begin{enumerate} 
 \item More precisely, the $C^0$-closeness statement means that for any choice of Riemannian metric on $X$ and for any $\varepsilon>0$ there exist $F_t$ and $\hat{s}$ as in the statement of the theorem such that $\text{dist}_{C^0}(\hat{s}, s)< \varepsilon$ on $Op\big(F_1(K) \big)$ with respect to the choice of metric. Similar remarks apply below whenever we talk about the $C^0$-closeness of two maps.
\item We say that an isotopy $F_t$ is $C^0$-small if it is $C^0$-close to the identity.
 
 \item The holonomic approximation lemma also holds in relative and parametric form, see \cite{EM02} for details. 
 \end{enumerate}
 \end{remark}

\begin{figure}[h]
\includegraphics[scale=0.6]{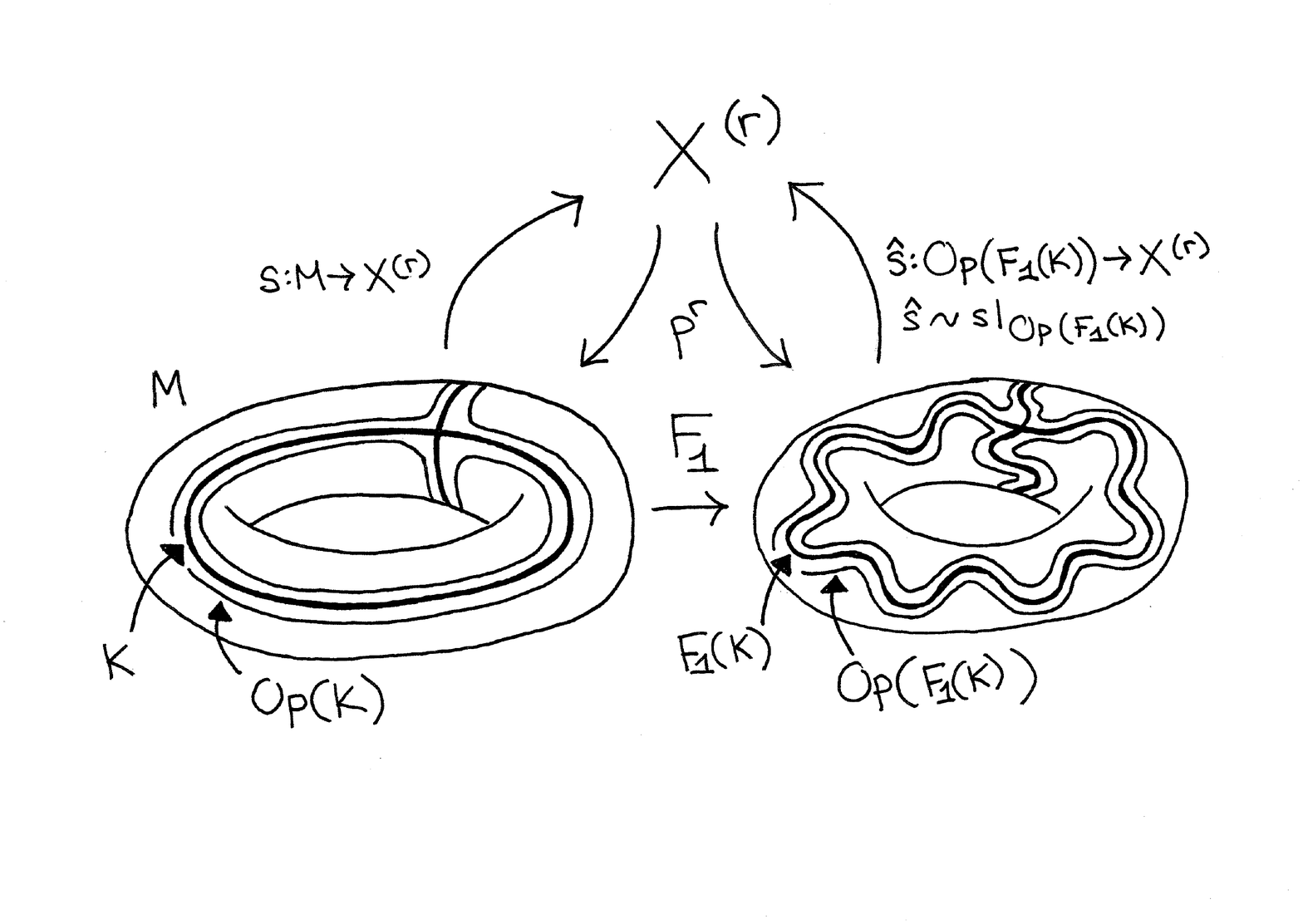}
\caption{The isotopy $F_t$ is $C^0$-small but typically wiggles the subset $K$ wildly inside $M$.}
\label{holonomic approximation}
\end{figure}

\subsection{Improved holonomic approximation}\label{Improved holonomic approximation} The holonomic approximation lemma \ref{classical holonomic approximation} is an extremely useful tool for proving $h$-principles. However, it turns out that for certain applications a stronger statement is needed. The goal of this paper is to prove several refinements of the method of holonomic approximation, which we formulate in this section. In Section \ref{Parametric versions} we state the parametric versions. In Section \ref{Applications to symplectic topology} we explain how these refinements yield new flexibility results in symplectic and contact topology. 

Recall that for $0 \leq l <r$ there are maps $p^r_l:X^{(r)} \to X^{(l)}$ which forget higher order information. The projection $p^r_l$ gives $X^{(r)}$ the structure of an affine bundle over $X^{(l)}$. Given a section $s:A \to X^{(r)}$ of $p^r$ defined over any subset $A \subset M$, we call $s^{(l)}=p^r_l \circ s : A \to X^{(l)}$ the $l$-jet component of $s$. 
\begin{definition} A section $s: U \to X^{(r)}$ of $ \,p^r$ defined over an open subset $U \subset M$ is called $l$-holonomic if $s^{(l)}$ is a holonomic section of $p^l$. 
\end{definition}
Our first refinement of the holonomic approximation lemma states that if we start with an $l$-holonomic section $s$ of $p^r$, then it is possible to carry out the holonomic approximation process on $s$ while ensuring global control of the $l$-jet component.

 \begin{theorem}[holonomic approximation lemma for $l$-holonomic sections]\label{Approximation of $l$-holonomic sections}
 Let $s:M \to X^{(r)}$ be a section of the $r$-jet bundle of a fibre bundle $p:X \to M$ and let $K \subset M$ be a polyhedron of positive codimension. Suppose that for some $l< r$ the section $s$ is $l$-holonomic. Then there exists an isotopy $F_t:M \to M$ and a holonomic section $\hat{s}:M \to X^{(r)}$ such that the following properties hold.
 \begin{itemize}
 \item $\hat{s}$ is $C^0$-close to $s$ on $Op\big(F_1(K)\big)$.
 \item $\hat{s}^{(l)}$ is $C^0$-close to $s^{(l)}$ on all of $M$.
 \item $F_t$ is $C^0$-small.
 \item $F_t=id_M$ and $\hat{s}^{(l)}=s^{(l)}$ outside of a slightly bigger neighborhood $Op(K) \supset Op\big(F_1(K)\big)$.
 \end{itemize}
 \end{theorem}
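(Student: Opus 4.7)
Since $s$ is $l$-holonomic, we can write $s^{(l)} = j^l(f)$ for some globally defined section $f : M \to X$ of $p$. In a local trivialization, $s$ prescribes at each $x \in M$ a polynomial of degree $r$ in the fibre coordinates whose Taylor coefficients of order $\leq l$ coincide with those of $f$, while the coefficients of orders $l+1, \ldots, r$ are arbitrary smooth functions of $x$. The theorem therefore reduces to producing a single section $\hat{f} : M \to X$ with $j^l(\hat{f})$ $C^0$-close to $j^l(f)$ everywhere, $\hat{f} = f$ outside a slightly larger neighborhood of $K$, and $j^r(\hat{f})$ $C^0$-close to $s$ on $Op(F_1(K))$. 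Setting $\hat{s} := j^r(\hat{f})$ will then satisfy all four bullet points.

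The plan is to construct $\hat{f} = f + g$ for a $C^l$-small correction $g$ supported in $Op(K)$ which adjusts only the derivatives of $f$ of orders $l+1, \ldots, r$ to match those prescribed by $s$ near $F_1(K)$. I fix a sufficiently fine smooth triangulation of $M$ in which $K$ is a subcomplex and build $g$ by induction on the skeleta of $K$. At each stage I use the same isotopy $F_t$ as in Theorem \ref{classical holonomic approximation} to deform the relevant simplex into a thin neighborhood of a coordinate affine subspace, so that the pure partial derivatives of $j^r(f)$ which must be modified all lie in the normal directions. Gromov's ``one pure partial derivative at a time'' technique then realizes each missing derivative by an oscillating correction of the form $(\varepsilon/2\pi)^{k} A(x) \cos(2\pi x_i / \varepsilon) \chi(x)$: such a function contributes a $k$-th pure partial derivative in the $x_i$-direction of magnitude $\pm A(x)\chi(x)$ modulo lower-order error, while its $C^l$-norm is $O(\varepsilon^{k-l})$ and tends to zero as $\varepsilon \to 0$ whenever $k > l$. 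This is precisely where the $l$-holonomy hypothesis is used: because the only derivatives we must modify are of order strictly greater than $l$, the corrections can be chosen $C^l$-small globally, and $g$ has $j^l$-norm as small as we wish.

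The main obstacle is to sequence the inductive steps so that the accumulated correction $g$ remains $C^l$-small, the $C^0$-approximation of $s$ by $j^r(f+g)$ already achieved on lower-dimensional strata is not destroyed by subsequent steps, and the support of $g$ stays inside the prescribed neighborhood $Op(K)$ of $K$. This is handled in the standard fashion: the oscillation scales $\varepsilon$ are taken to decrease very rapidly as the induction progresses, all corrections associated to a fixed simplex (one for each multi-index $\alpha$ with $l < |\alpha| \leq r$) are packaged into a single smooth function, and nested cutoffs are used so that the correction over a simplex $\sigma$ lives in a thin neighborhood of $F_1(\sigma)$ disjoint from the neighborhoods supporting corrections on lower-dimensional faces. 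The exact conclusion $\hat{s}^{(l)} = s^{(l)}$ outside $Op(K)$ follows automatically from the support condition on $g$. The relative and parametric refinements then follow from the standard formal arguments used in the proof of Theorem \ref{classical holonomic approximation}.
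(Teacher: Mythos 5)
Your global reduction---construct a single correction $g$ supported in $Op(K)$ with $\|g\|_{C^l}$ small and $j^r(f+g)$ close to $s$ near $F_1(K)$, using the $l$-holonomy hypothesis to make the correction $C^l$-small---is sound, and the paper itself remarks that such a direct route is possible. But the local mechanism you propose does not produce the required approximation. First, the claim that after deforming a simplex ``the pure partial derivatives which must be modified all lie in the normal directions'' is false: $s$ prescribes arbitrary values for every multi-index $\alpha$ with $l<|\alpha|\leq r$, including pure derivatives in directions tangent to the simplex and mixed derivatives, and the tangential ones are exactly what forces the wiggling (already for $r=1$, prescribing $\partial_1 h=1$ with $h=0$ along $[-1,1]\times\{0\}$ is impossible without an isotopy). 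Second, the ansatz $(\varepsilon/2\pi)^{k}A(x)\cos(2\pi x_i/\varepsilon)\chi(x)$ has $k$-th pure $x_i$-derivative equal, up to lower order, to $A\chi\cos(2\pi x_i/\varepsilon+k\pi/2)$, whose sign oscillates with period $\varepsilon$ in $x_i$; it matches the prescribed value only in thin slabs around the crests. When $x_i$ is tangent to the stratum, no $C^0$-small isotopy can confine $F_1(K)$ to those slabs, since $F_1(K)$ still projects onto the whole $x_i$-range. (The role of $F_t$ in the classical proof is not to straighten the simplex but to thread it transversally between the interpolation regions.) Third, oscillation in a single coordinate direction only controls the pure derivative in that direction; you give no mechanism for mixed multi-indices.

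The mechanism that actually works, both in Eliashberg--Mishachev and here, is ``freeze and interpolate'': one freezes the full Taylor polynomial of $s$ at a $\delta$-net of base points along the stratum and interpolates between adjacent base polynomials in a direction normal to the stratum; the locally defined pieces glue only over the wiggled neighborhood $U=F_1(\{|x_m|<\varepsilon/4\})$, so all derivatives up to order $r$ are matched near $F_1(K)$ with no sign problem. The new content of the theorem is then the control of the cutoff, which the paper obtains by (i) reducing to $l=r-1$ (Lemma \ref{reduction to $l=r-1$}), (ii) decomposing the order-$r$ discrepancy into primitive pieces via the identity $X_1\cdots X_r=\tfrac{(-1)^r}{r!}\sum_U(-1)^{|U|}\bigl(\sum_{u\in U}X_u\bigr)^r$, so that each piece is an $r$-th power of one linear form and the wiggle can be taken parallel to its kernel---this is also how mixed derivatives are handled---and (iii) a transversality adjustment where that kernel is nearly tangent to $K$. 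A direct proof along your lines would have to replace the cosine ansatz by the freeze-and-interpolate formula and then establish the cutoff estimates of Sections \ref{transversality adjustment} and \ref{holonomic approximation with controlled cutoff}.
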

 
 \begin{remark}\label{remarks on l-holonomic approximation} $ $
 \begin{enumerate}
 \item Every section $s:M \to X^{(r)}$ of $p^r$ is $0$-holonomic. Therefore, taking $l=0$ in Theorem \ref{Approximation of $l$-holonomic sections} we recover the classical Theorem \ref{classical holonomic approximation}.
\item Theorem \ref{Approximation of $l$-holonomic sections} also holds in relative form. To be precise, if $s$ is already holonomic on $Op(A)$ for some closed subset $A \subset M$, then we can arrange it so that $F_t=id_M$ and $\hat{s}=s$ on $Op(A)$. The same comment applies to Theorem \ref{Approximation of perp-holonomic sections} below.
\end{enumerate}
 \end{remark}
Our second refinement concerns a specific class of $(r-1)$-holonomic sections of $p^r$, which we call $\perp$-holonomic.  Informally, we can describe a $\perp$-holonomic section as a section of $p^r$ which differs from a holonomic section only by the formal analogue of a pure order $r$ partial derivative.  We show that when the holonomic approximation process is applied to a $\perp$-holonomic section it is not only possible to globally control the $(r-1)$-jet component, but it is also possible to globally control the order $r$ information complementary to this formal pure order $r$ partial derivative. The precise statement is most cleanly phrased in terms of the bundle $X^{\perp}$, which we define below. This bundle was first introduced in by Gromov in  \cite{G86} in the context of convex integration, where the language of $\perp$-jets is used to construct iterated convex hull extensions of partial differential relations. A thorough exposition of the theory of convex integration, including details on the geometry of $X^{\perp}$, can be found in Spring's book \cite{S98}. 

 Let $\tau \subset TM$ be a hyperplane field on $M$. We associate to $\tau$ a bundle $p^{\perp}:X^{\perp} \to M$ in the following way. The fibre of $p^{\perp}$ over a point $x \in M$ consists of equivalence classes of germs of sections $h:Op(x) \to X$ of $p$, where two germs are identified if their $(r-1)$-jets $y=j^{r-1}(h)(x) \in X^{(r-1)}$ at the point $x$ are the same and moreover if the restriction of their tangent maps $d\big (j^{r-1}(h) \big) :T_xM \to T_{y}X^{(r-1)}$ to the hyperplane $\tau_x \subset T_xM$ are the same. When $X=M \times N$ is a trivial bundle we also write $X^\perp= J^\perp(M,N)$. Observe that $X^{\perp}$ lies between $X^{(r)}$ and $X^{(r-1)}$ in the sense that there exist natural affine bundle structures $p^{r}_\perp:X^{(r)} \to X^{\perp}$ and $p^{\perp}_{r-1}:X^{\perp} \to X^{(r-1)}$ such that $p^{\perp}_{r-1} \circ p^r_{\perp}=p^r_{r-1}$. Given a section $s:A \to X^{(r)}$ of $p^r$ defined over any subset $A \subset M$, we call $s^{\perp}=p^r_\perp \circ s : A \to X^{\perp}$ the $\perp$-component of $s$. Given a section $h:U \to X$ of $p$ defined over an open subset $U \subset M$, we denote by $j^{\perp}(h):U \to X^{\perp}$ the section of $p^{\perp}$ formed by the $\perp$-jets of $h$. Explicitly, $j^\perp(h)=p^r_\perp \circ j^{r}(h)$. Sections of $p^{\perp}$ of the form $j^{\perp}(h)$ are called holonomic.  
 
  \begin{definition} A section $s:U \to X^{(r)}$ of $p^r$ defined over an open subset $U \subset M$ is called \mbox{$\perp$-holonomic} with respect to a hyperplane field $\tau \subset TM$ if $s^{\perp}$ is a holonomic section of $p^{\perp}$. 
 \end{definition}
 
\begin{figure}[h]
\includegraphics[scale=0.6]{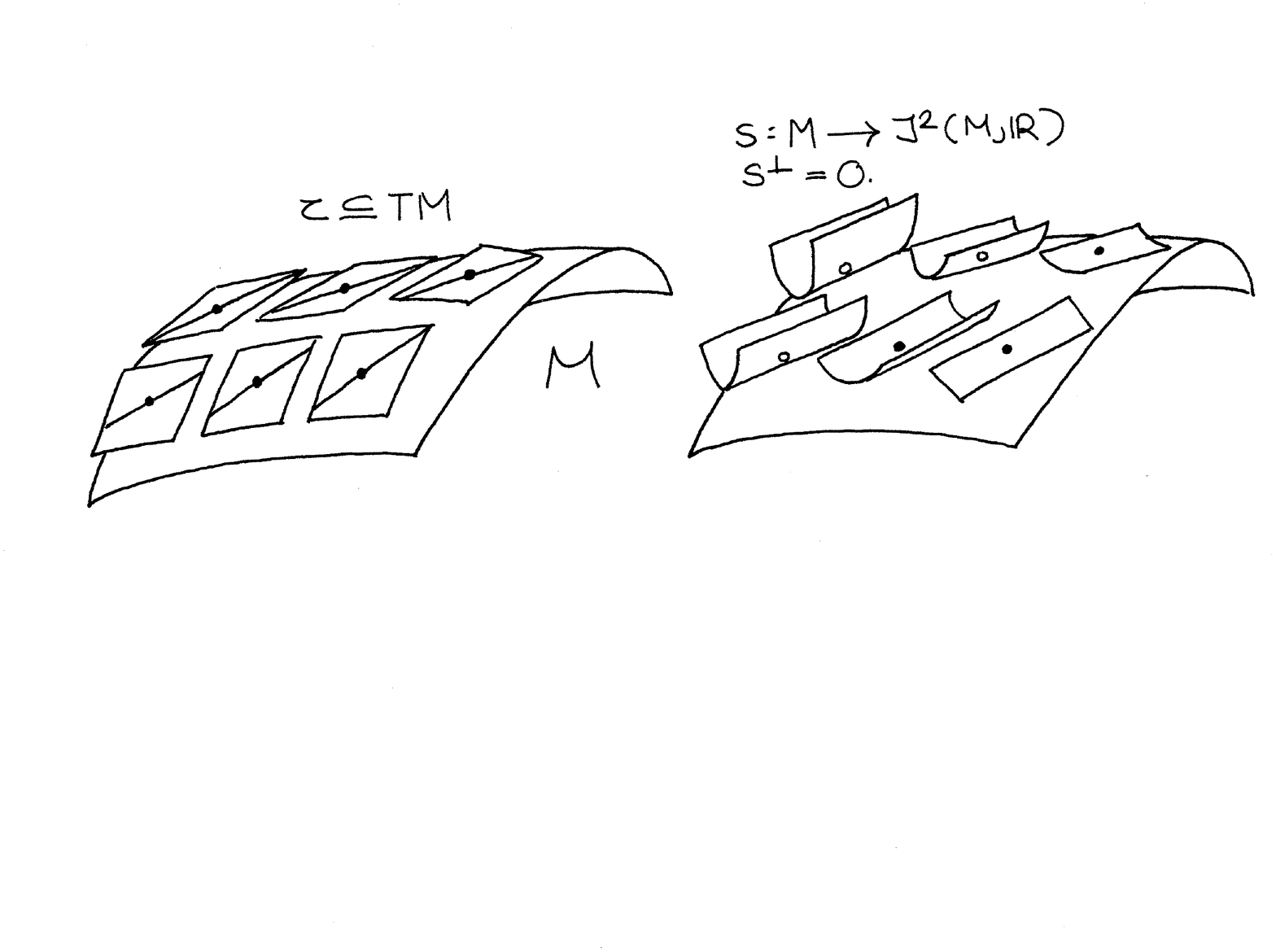}
\caption{The section $s$ of the $2$-jet bundle $J^2(M, \bR)$ is $\perp$-holonomic with respect to the hyperplane field $\tau \subset TM$. Indeed, $s^\perp=0$ is a holonomic section of $J^\perp(M, \bR)$.}
\label{primitieve}
\end{figure}

 \begin{theorem}[holonomic approximation lemma for $\perp$-holonomic sections]\label{Approximation of perp-holonomic sections}
 Let $s:M \to X^{(r)}$ be a section of the $r$-jet bundle of a fibre bundle $p:X \to M$ and let $K \subset M$ be a polyhedron of positive codimension. Suppose that the section $s$ is $\perp$-holonomic with respect to some hyperplane field $\tau \subset TM$. Then there exists an isotopy $F_t:M \to M$ and a holonomic section $\hat{s}:M \to X^{(r)}$ such that the following properties hold.
 \begin{itemize}
 \item $\hat{s}$ is $C^0$-close to $s$ on $Op\big(F_1(K) \big)$. 
 \item $\hat{s}^{\perp}$ is $C^0$-close to $s^{\perp}$ on all of $M$.
 \item $F_t$ is $C^0$-small.
 \item $F_t=id_M$ and $\hat{s}^{\perp}=s^{\perp}$ outside of a slightly bigger neighborhood $Op(K) \supset Op\big( F_1(K) \big)$.
 
 \end{itemize}
 \end{theorem}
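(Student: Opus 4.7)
The plan is to refine the proof of the classical holonomic approximation lemma by singling out a preferred direction, namely the fibre direction of $\tau$, in both the wiggling of $K$ and the local Taylor extension. Since $s$ is $\perp$-holonomic, we may write $s = j^r(g) + \Delta$ for some section $g\colon M \to X$ with $j^\perp(g) = s^\perp$, where $\Delta$ is a section of the affine kernel bundle of $p^r_\perp\colon X^{(r)} \to X^\perp$. In a local chart where $\tau = \ker(dx_n)$, $\Delta$ corresponds to a ``missing'' function $f$ that records the pure $\partial_n^r$ data. The approximation will have the form $\hat{s} = j^r(g + \phi)$ for a correction $\phi$ supported in a thin neighborhood of $F_1(K)$, built so that $\partial_n^r \phi \approx f$ near $F_1(K)$ while every other partial derivative of $\phi$ of order $\leq r$ stays globally small.

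The key geometric step is the following refinement of the wiggling from Theorem \ref{classical holonomic approximation}: find a $C^0$-small isotopy $F_t$, supported in a slightly enlarged neighborhood of $K$, such that in each chart adapted to $\tau$, $F_1(K)$ is a finite union of small pieces each of which is the graph $\{x_n = \varphi(x')\}$ of a function with $\|\varphi\|_{C^0} \lesssim h$ over a base of horizontal diameter $\ell$, with $h \ll \ell$. In the classical theorem the direction of thinness can be chosen from simplex to simplex, whereas here it is fixed to be the transverse direction $\partial_n$ to $\tau$; the positive-codimension hypothesis on $K$ allows us to subdivide $K$ into sufficiently small simplices and rotate each into a thin horizontal slab by a $C^0$-small zigzag isotopy, the vertical extent of the zigzag turns being controlled by the subdivision scale.

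Given such a wiggling, on each local piece of $F_1(K)$ define
\[
\phi(x) \;=\; f(x', \varphi(x'))\,\frac{(x_n - \varphi(x'))^r}{r!}\,\chi(x),
\]
where $\chi$ is a smooth cutoff equal to $1$ on a tube of vertical extent $h$ around the piece and vanishing outside a slightly larger tube of horizontal scale $\ell$. A Leibniz computation yields $|\partial^\alpha \phi| \lesssim h^{r-\alpha_n}\,\ell^{-(|\alpha| - \alpha_n)}$ in the support of $\chi$. For every multi-index $\alpha$ with $|\alpha| \leq r$ and $\alpha \neq (0, \ldots, 0, r)$ this quantity is arbitrarily small in the regime $h, h/\ell \to 0$, whereas $\partial_n^r \phi$ equals $f(x', \varphi(x'))$ on $\{\chi = 1\}$ and thus approximates $f$ to order $h$. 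Extending the chartwise construction to all of $M$ by a standard induction over a locally finite cover (using the relative version alluded to in Remark \ref{remarks on l-holonomic approximation}(2)) and setting $\hat{s} = j^r(g + \phi)$, the four required properties follow: $\hat{s}$ is $C^0$-close to $s$ on the tube around $F_1(K)$; $\hat{s}^\perp = s^\perp + j^\perp(\phi)$ is globally $C^0$-close to $s^\perp$; $F_t$ is $C^0$-small; and both $\phi$ and $F_t$ are supported in the prescribed enlarged neighborhood of $K$.

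The main obstacle is the refined wiggling step. In the classical setting one is free to tailor the wiggling direction to the combinatorics of $K$, whereas here the direction is prescribed by $\tau$. One must verify that even when a simplex of $K$ is oriented along $\partial_n$ or makes a large angle with $\tau$, a $C^0$-small zigzag isotopy can reshape it into a union of thin horizontal slabs, with the slab parameters $h \ll \ell$ uniformly controllable across a finite cover, and across parameters in the relative and parametric versions.
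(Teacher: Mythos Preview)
Your integral formula and the accompanying Leibniz estimate are essentially correct and coincide with what the paper does in the regime where a simplex of $K$ is nearly tangent to $\tau$ (so that it genuinely is a low-slope graph $\{x_n=\varphi(x')\}$); this is the content of the paper's Lemma~\ref{calculation for almost tangent}. The genuine gap is exactly the step you flag as ``the main obstacle'' and then leave unverified: a $C^0$-small isotopy cannot in general reshape a simplex oriented along $\partial_n$ into a union of graphs $\{x_n=\varphi(x')\}$ with $\|\nabla\varphi\|\lesssim h/\ell$. Any planar zigzag of a vertical segment must reverse horizontal direction, and at each smoothed turn the curve acquires a vertical tangent, so it is not locally a graph over $x'$; your formula for $\phi$ is undefined there, and patching with a partition of unity reintroduces order-one errors in the mixed $r$th derivatives (hence in $\hat s^{\perp}$). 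In higher dimensions one can sometimes spiral instead of zigzag, but for $(m-1)$-dimensional $K$ the projection to $\tau$ generically has fold loci, and near a fold the graph function $\varphi$ has unbounded gradient, again destroying your estimate.

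The paper's resolution is not to flatten $K$ in the $\partial_n$ direction at all. It splits into two regimes. When $\tau$ is nearly tangent to a face of $K$, no wiggling is needed and your formula (their Lemma~\ref{calculation for almost tangent}) applies directly, with error proportional to the angle. When $\tau$ is quantitatively transverse to a face, the paper wiggles that face in a direction lying \emph{inside} $\tau$ and transverse to the face (this direction exists precisely because of the transversality), then integrates in the $\partial_1$ direction normal to $\tau$ from a basepoint $b(x_m)$ that interpolates along the wiggle and is constant near the extrema of the wiggle, so the pieces glue seamlessly (Section~\ref{holonomic approximation with controlled cutoff}). The crucial point is that because the wiggling direction lies in $\tau$, it contributes nothing to the $\perp$-component, and the cutoff in that direction costs only powers of $\varepsilon$ rather than of the normal thickness. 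In short: your proposal correctly handles the tangent regime but tries to force the transverse regime into the same mold; the paper instead treats the transverse regime with a genuinely different wiggling geometry.
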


 \begin{remark}
 In fact, the proof of Theorem \ref{Approximation of perp-holonomic sections} will produce a very specific isotopy $F_t$. Informally, we can say that $F_t$ wiggles $K$ in such a way that the wiggles are parallel to the hyperplane field $\tau$. More formally, we can arrange so that the pulled back hyperplane field $F_t^* \tau $ is $C^0$-close to $\tau $ for all $t \in [0,1]$, see Figure \ref{perp-holonomic approximation}. An analogous comment applies in the parametric version Theorem \ref{Parametric approximation of perp-holonomic sections} below.
 \end{remark}
 
\begin{figure}[h]
\includegraphics[scale=0.6]{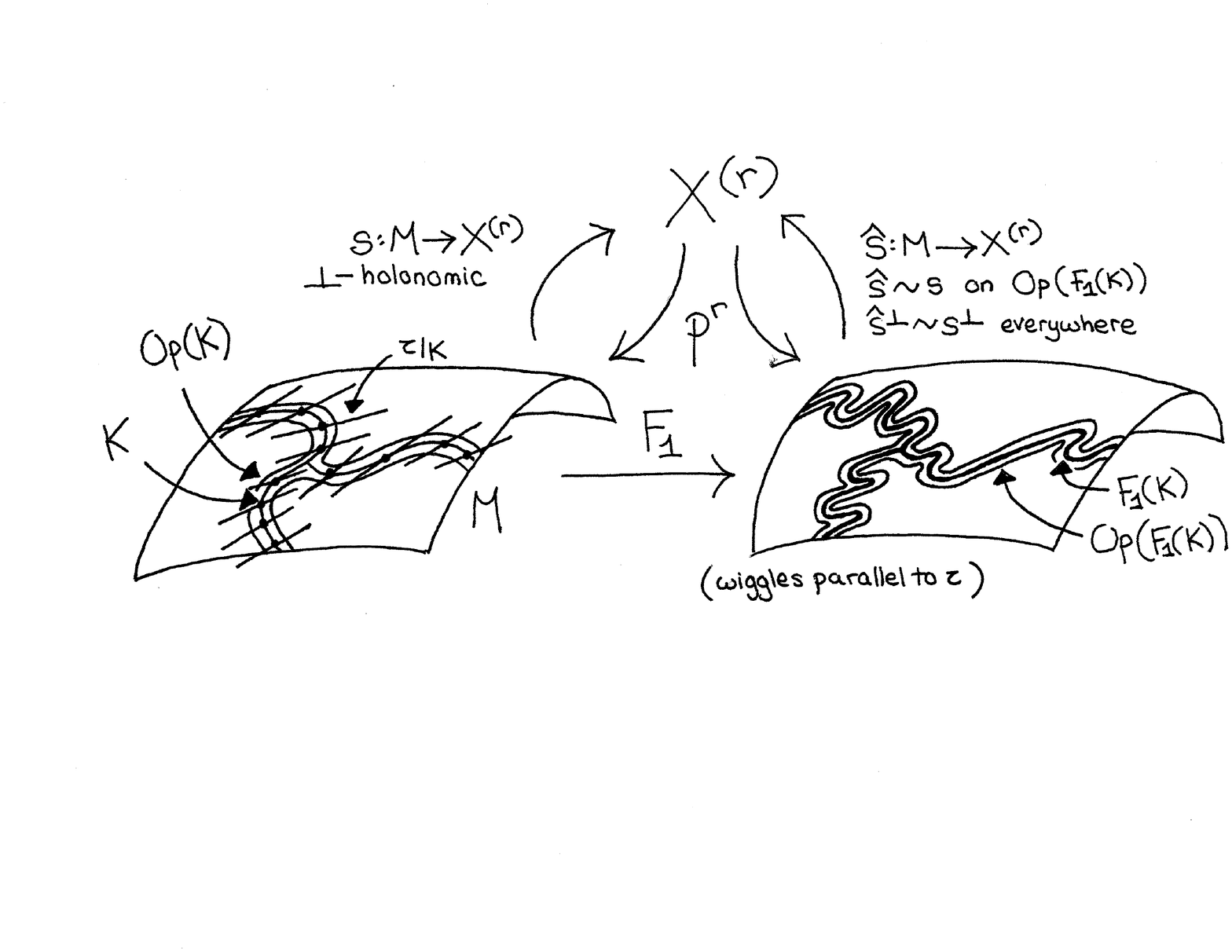}
\caption{The wiggles of the deformed subset $F_1(K)$ are parallel to $\tau \subset TM$ }
\label{perp-holonomic approximation}
\end{figure}

  \subsection{Parametric versions}\label{Parametric versions}
Our main results Theorem \ref{Approximation of $l$-holonomic sections}  and Theorem \ref{Approximation of perp-holonomic sections} remain true in families. We take our parameter space to be a compact manifold $Z$, whose boundary $\partial Z$ is possibly nonempty. We consider families of sections parametrized by $Z$. For example, a family of sections $s_z:M \to X^{(r)}$ depending on the parameter $z \in Z$ is a smooth mapping $s:Z \times M\to X^{(r)}$ such that for every $z \in Z$ the assignment $x \mapsto s(z,x)$ defines a smooth section $s_z$ of $p^r$. Additionally, we allow the polyhedron $K$ to vary with the parameter in the following way.
\begin{definition} A closed subset $K \subset Z \times M$ is called a fibered polyhedron if it is a subcomplex of a smooth triangulation of $Z \times M$ which is in general position with respect to the fibres $z \times M$, $z \in Z$.
\end{definition}

 A consequence of this definition is that for every $z \in Z$ the subset $K_z \subset M$ given by $K \cap (z \times M)= z  \times K_z$ is a polyhedron in $M$, see Figure \ref{fibered}. If $K$ has positive codimension in $Z \times M$, then $K_z$ has positive codimension in $M$ for all $z \in Z$. We are now ready to formulate the parametric analogues of Theorems \ref{Approximation of $l$-holonomic sections} and \ref{Approximation of perp-holonomic sections}.
  \begin{theorem}[parametric holonomic approximation lemma for $l$-holonomic sections]\label{Parametric approximation of $l$-holonomic sections}
 Let $s_z:M \to X^{(r)}$ be a family of sections of the $r$-jet bundle associated to a fibre bundle $p:X \to M$ parametrized by a compact manifold $Z$. Let $K \subset Z \times M$ be a fibered polyhedron of positive codimension. Suppose that for some $l<r$ the sections $s_z$ are $l$-holonomic for all $z \in Z$ and that they are holonomic for $z \in Op(\partial Z)$. Then there exists a family of isotopies $F^z_t:M \to M$ and a family of holonomic sections $\hat{s}_z:M \to X^{(r)}$ such that the following properties hold.
 \begin{itemize}
 \item $\hat{s}_z$ is $C^0$-close to $s_z$ on $Op\big(F^z_1(K_z)\big)$.
 \item $\hat{s}_z^{(l)}$ is $C^0$-close to $s_z^{(l)}$ on all of $M$.
 \item $F^z_t$ is $C^0$-small.
 \item $F^z_t=id_M$ and $\hat{s}^{(l)}_z=s^{(l)}_z$ outside of a slightly bigger neighborhood $Op(K_z) \supset Op\big(F^z_1(K_z)\big)$.
 \item $F^z_t=id_M$ and $\hat{s}_z=s_z$ for $z \in Op(\partial Z)$.
 \end{itemize}
 \end{theorem}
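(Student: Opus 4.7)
The plan is to reduce the parametric statement to the non-parametric Theorem~\ref{Approximation of $l$-holonomic sections} by treating the parameter space as additional base coordinates. Set $\tilde M := Z \times M$ and consider the pulled-back fibre bundle $\tilde p : Z \times X \to \tilde M$, whose sections over an open subset of $\tilde M$ are precisely families of sections of $p$ parametrised by the corresponding slices in $Z$. The family $s_z$ does not itself provide a section of $\tilde X^{(r)}$, since it records no derivatives in the $Z$-direction; but the $l$-holonomy hypothesis supplies honest sections $h_z$ of $p$ with $j^l(h_z) = s_z^{(l)}$, and the section $\tilde h(z,m) := (z, h_z(m))$ of $\tilde p$ can be used to fill in the missing $Z$-derivatives at the $l$-jet level. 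Above order $l$ the extension is irrelevant and is chosen arbitrarily (say, zero in a local trivialisation), since only the $l$-jet component of the approximation will be globally controlled. In this way the data assembles into a single $l$-holonomic section $\tilde s$ of $\tilde X^{(r)}$ over $\tilde M$.

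I would then apply the non-parametric Theorem~\ref{Approximation of $l$-holonomic sections} to $\tilde s$ with the fibered polyhedron $K \subset \tilde M$, which has positive codimension by hypothesis. The assumption that $s_z$ is holonomic on $Op(\partial Z)$ means that $\tilde s$ may be taken holonomic on $Op(\partial Z \times M)$, so the relative form of the theorem (Remark~\ref{remarks on l-holonomic approximation}(2)) will keep the construction stationary there and outside a slightly enlarged neighbourhood of $K$. The output is an isotopy $\tilde F_t$ of $\tilde M$ together with a holonomic section $\hat{\tilde s}$, and restricting both to slices $\{z\} \times M$ produces the candidates for the desired $F^z_t$ and $\hat s_z$. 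Provided those candidates are well defined, the required $C^0$-closeness estimates, off-$K$ rigidity, and behaviour on $Op(\partial Z)$ follow immediately by slicing the corresponding statements for $\tilde F_t$ and $\hat{\tilde s}$.

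The main obstacle is that slicewise restriction of $\tilde F_t$ only makes sense if $\tilde F_t$ preserves the fibration $\tilde M \to Z$, i.e.\ moves only in the $M$-direction. I expect this to be the crux, and I would handle it by inspecting the construction of the isotopy in the proof of Theorem~\ref{Approximation of $l$-holonomic sections}: the isotopy is built cell-by-cell along a triangulation of $K$, with each stage wiggling transversely to some top-dimensional cell in an arbitrarily chosen direction. The definition of a \emph{fibered} polyhedron is tailored precisely for this purpose: since the triangulation is in general position with respect to the fibres $\{z\} \times M$, every cell of $K$ admits a transverse direction tangent to the $M$-factor, and this direction can be taken as the wiggling direction throughout. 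Should this direct argument prove technically cumbersome, an alternative is to induct on the skeleton of a fine triangulation of $Z$: on each cell $\sigma \subset Z$ one applies the non-parametric theorem on the slab $\sigma \times M$ to the polyhedron $K \cap (\sigma \times M)$, and the relative form pastes the new piece to what has already been built over $\partial \sigma \times M$ and over $Op(\partial Z)$.
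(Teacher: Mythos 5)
Your reduction---absorbing the parameter space into the base by setting $\widetilde{M}=Z\times M$ and applying the non-parametric Theorem \ref{Approximation of $l$-holonomic sections} to an assembled $l$-holonomic section $\widetilde{s}$---is a genuinely different route from the paper's, and you have correctly located its weak point, but you have not closed it. The non-parametric theorem as stated gives an isotopy $\widetilde{F}_t$ of $\widetilde{M}$ constrained only to be $C^0$-small; without the additional property that $\widetilde{F}_t$ preserves the fibration $\widetilde{M}\to Z$, the slices $\widetilde{F}_1(K)\cap(\{z\}\times M)$ need not be of the form $\{z\}\times F^z_1(K_z)$ for any isotopy of $M$ (a $C^0$-small displacement in the $Z$-direction can change the fibrewise slices of $K$ drastically, even topologically), so the slicewise restriction on which your whole argument hinges is unavailable. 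Establishing fibre-preservation is not something a user of the theorem can do by ``inspection'': it requires re-running the entire chain of reductions---holonomic trivialization, reduction to $l=r-1$, the primitive decomposition whose hyperplane fields $\tau_\beta$ now involve the $Z$-coordinates, the transversality adjustment, and the final wiggling---and verifying at every stage that the transverse direction can be chosen tangent to the $M$-factor and that every auxiliary isotopy that gets composed in is fibrewise. Your fallback of triangulating $Z$ and applying the non-parametric theorem slab by slab on $\sigma\times M$ does not help, since each such application produces an isotopy of $\sigma\times M$ with exactly the same defect.

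The paper avoids this by never treating $Z$ as a base direction: it proves parametric local relative lemmas (Theorems \ref{parametric localized holonomic approximation lemma for $l$-holonomic sections} and \ref{parametric localized holonomic approximation lemma for perp-holonomic sections}) in which the isotopies $F^z_t:I^m\to I^m$ are fibre-preserving by construction---the explicit formula for $\varphi^z_t$ in Section \ref{holonomic approximation with controlled cutoff} displaces only the $x_m$-coordinate and tapers the amplitude near $\partial I^q$ through the factors $\psi\big((1-|z_j|)/\varepsilon\big)$---and then assembles the global statement by induction over the skeleton of the fibered polyhedron, localizing in both $M$ and $Z$. So the content your proposal outsources to the non-parametric theorem is precisely the content that has to be reproved with the parameter carried along. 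Two secondary points: to invoke the relative form on $Op(\partial Z\times M)$ you must also choose the order-$>l$ components of $\widetilde{s}$ in the $Z$-directions so as to interpolate between your arbitrary extension in the interior and the genuinely holonomic one near $\partial Z$ (possible, since the lifts with prescribed $M$-components form an affine bundle, but it needs saying); and, granting fibre-preservation, the remaining slicing deductions you list are correct.
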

   \begin{theorem}[parametric holonomic approximation lemma for $\perp$-holonomic sections]\label{Parametric approximation of perp-holonomic sections}
 Let $s_z:M \to X^{(r)}$ be a family of sections of the $r$-jet bundle associated to a fibre bundle $p:X \to M$ parametrized by a compact manifold $Z$. Let $K \subset Z \times M$ be a fibered polyhedron of positive codimension. Suppose that the sections $s_z$ are $\perp$-holonomic with respect to some family of hyperplane fields $\tau_z \subset TM$ for all $z \in Z$ and that they are holonomic for $z \in Op(\partial Z)$. Then there exists a family of isotopies $F^z_t:M \to M$ and a family of holonomic sections $\hat{s}_z:M \to X$ such that the following properties hold.
 \begin{itemize}
 \item $\hat{s}_z$ is $C^0$-close to $s_z$ on $Op\big(F^z_1(K_z)\big)$.
 \item $\hat{s}_z^\perp$ is $C^0$-close to $s_z^{\perp}$ on all of $M$.
 \item $F^z_t$ is $C^0$-small.
 \item $F^z_t=id_M$ and $\hat{s}_z^\perp=s_z^\perp$ outside of a slightly bigger neighborhood $Op(K_z) \supset Op\big(F^z_1(K_z)\big)$.
 \item $F^z_t=id_M$ and $\hat{s}_z=s_z$ for $z \in Op(\partial Z)$.
 \end{itemize}
 \end{theorem}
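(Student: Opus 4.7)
The plan is to deduce this parametric statement from the non-parametric Theorem \ref{Approximation of perp-holonomic sections} by running the latter's construction fibrewise and managing the behaviour near $\partial Z$ with a cutoff. The key observation is that the construction underlying Theorem \ref{Approximation of perp-holonomic sections} -- shears parallel to $\tau$ combined with order-$r$ corrections along the direction complementary to $\tau$ -- is assembled from local operations given by explicit formulas that depend smoothly on the inputs $(s,\tau,K)$. Consequently, provided $K_z$ varies nicely with $z$, the fibrewise outputs will themselves depend smoothly on the parameter.

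Concretely, I would first pick a cutoff $\chi:Z \to [0,1]$ equal to $1$ on a compact subset of $\mathrm{int}(Z)$ and equal to $0$ on $Op(\partial Z)$, where $s_z$ is already holonomic. Next I would use the fibered polyhedron hypothesis to fix a smooth triangulation of $Z \times M$ in which $K$ is a subcomplex and which is in general position with respect to the projection $Z \times M \to Z$; this yields, for each $z \in Z$, a polyhedral structure on $K_z$ whose simplices can be tracked smoothly in $z$. I would then run the non-parametric construction fibrewise on $(s_z,\tau_z,K_z)$, producing candidates $(F^z_t,\hat{s}_z)$. Finally, I would interpolate with $(\mathrm{id}_M,s_z)$ using $\chi$ in the parameter direction; since the isotopies and $\perp$-corrections produced by the non-parametric construction are compactly supported in a prescribed neighbourhood of $K_z$, this interpolation preserves both the $C^0$-closeness of $\hat{s}_z$ to $s_z$ on $Op\big(F^z_1(K_z)\big)$ and the boundary conditions demanded by the theorem. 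Compactness of $Z$ guarantees that all the $C^0$-estimates can be made uniform in $z$.

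The main obstacle is establishing the smooth parameter dependence of the fibrewise construction across the walls in $Z$ where the combinatorial type of $K_z$ changes. Locally away from such walls, the simplices of $K_z$ vary by ambient isotopy in $M$ and the fibrewise operations plainly assemble into smooth families; across walls one must exploit the general-position clause in the definition of a fibered polyhedron, which ensures that each simplex of the ambient triangulation projects submersively onto $Z$ wherever it meets a fibre, so that the inductive construction over strata can be set up coherently. Once this smoothness is in place, the $\tau_z$-parallelism of the wiggles, the global control of $\hat{s}^{\perp}_z$ on all of $M$, and the boundary conditions $F^z_t=\mathrm{id}_M$ and $\hat{s}_z=s_z$ on $Op(\partial Z)$ all transfer directly from the corresponding clauses in the non-parametric statement.
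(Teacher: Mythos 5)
Your overall instinct---that the construction is explicit enough to be carried out in families---is sound, but the specific mechanism you propose (run the non-parametric Theorem \ref{Approximation of perp-holonomic sections} fibrewise, then cut off in $z$) has two genuine gaps. First, the fibrewise outputs $(F^z_t,\hat s_z)$ are not even continuous in $z$: the non-parametric construction depends on an ordering of the cells of $K_z$, on a subdivision of each cell into subcubes adapted to the angle between $\tau_z$ and that cell (the transversality adjustment of Section \ref{transversality adjustment}), and on choices of wiggling parameters, and moreover the combinatorial type of $K_z$ jumps across walls in $Z$. You flag this as ``the main obstacle,'' but the appeal to general position does not resolve it: general position only guarantees that each $K_z$ is a polyhedron, not that a fibrewise-made construction assembles into a smooth family. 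The paper avoids the issue entirely by never arguing fibrewise: the induction runs over the cells of the fibered polyhedron $K$ inside $Z\times M$, and for each cell one proves a genuinely parametric local lemma (Theorems \ref{parametric localized holonomic approximation lemma for perp-holonomic sections} and \ref{parametric final reformulation}, Lemma \ref{parametric calculation for almost tangent}) in which the subset $I^k\subset I^m$ is fixed and only the section and hyperplane field depend on $z\in I^q$; the formulas are then literally the non-parametric ones with the parameter inserted everywhere.

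Second, the interpolation near $\partial Z$ does not work as stated. Even if you arrange the transition region of $\chi$ to lie inside $Op(\partial Z)$, where $s_z=j^r(h_z)$ is holonomic, interpolating sections requires a linear structure on the fibres of $X$ (absent for a general fibre bundle), and, more seriously, the interpolated section is only known to be $C^0$-close to $s_z$ on $Op\big(F^z_1(K_z)\big)$, whereas the damped isotopy $F^z_{\chi(z)t}$ places $K_z$ at $F^z_{\chi(z)}(K_z)$, over which you have no estimate; so the first bullet of the theorem is not preserved by your interpolation. The paper's fix is to make the relative-in-$z$ condition part of the local model ($\sigma_z\equiv 0$ for $z\in Op(\partial I^q)$ after the holonomic trivialization \ref{holonomic trivialization}) and to build the damping into the formula for the wiggling isotopy itself, via the cutoff factors $\psi\big((1-|z_j|)/\varepsilon\big)$ appearing in $\varphi^z_t$; then the wiggle amplitude and the correction term vanish identically near $\partial I^q$ and no a posteriori interpolation is needed.
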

 \begin{figure}[h]
\includegraphics[scale=0.6]{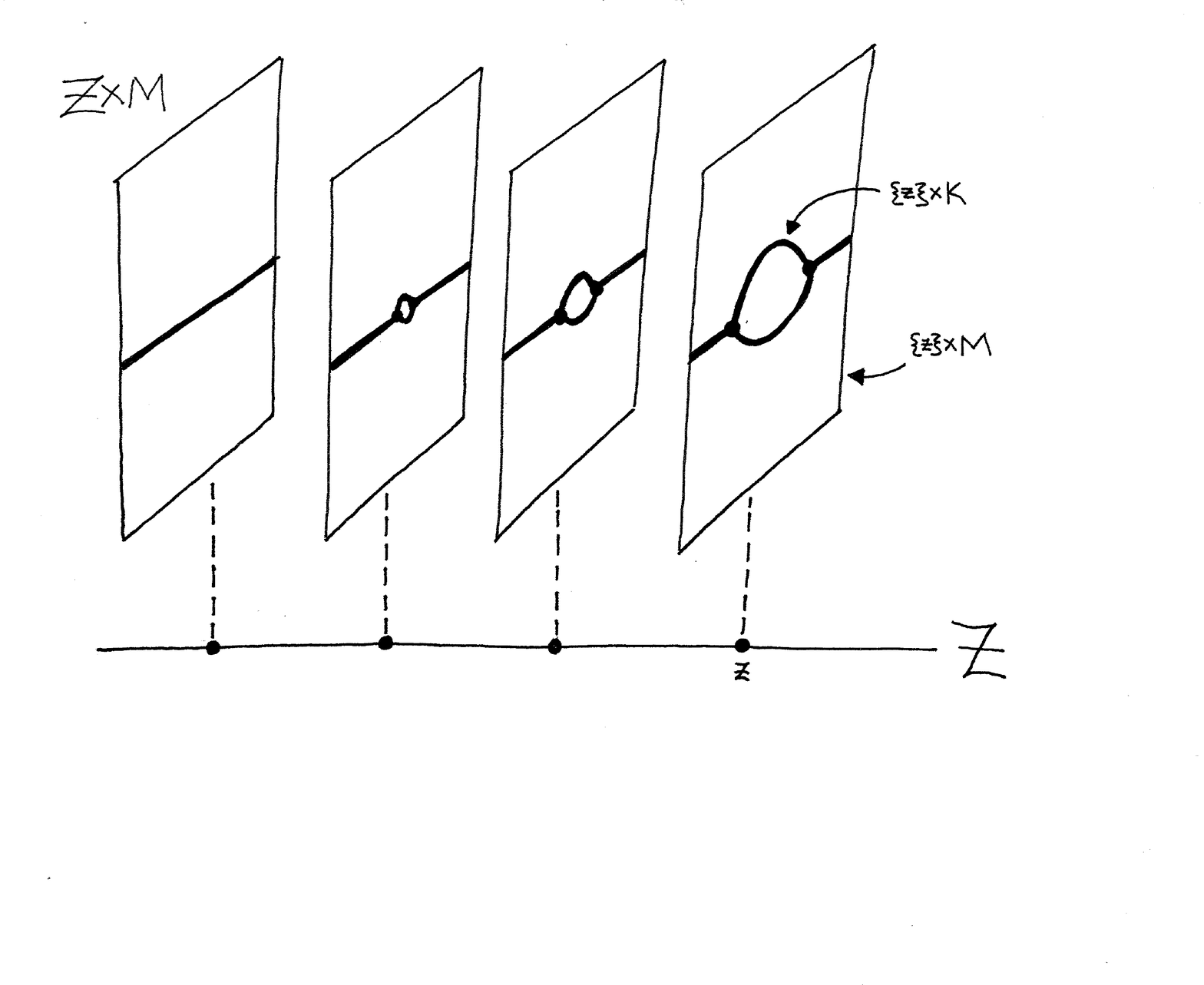}
\caption{A typical fibered polyhedron $K \subset Z \times M$.}
\label{fibered}
\end{figure}

 \begin{remark} $ $
 \begin{enumerate}
\item Observe that the formulation of the parametric Theorems \ref{Parametric approximation of $l$-holonomic sections} and \ref{Parametric approximation of perp-holonomic sections} is relative with respect to the closed subset $\partial Z$ of the parameter space $Z$. In typical applications, taking $(Z, \partial Z)=(D^m,S^{m-1})$ leads to a result about certain relative homotopy groups vanishing, which can be rephrased in terms of the existence of a homotopy equivalence (the $h$-principle).
\item Theorems \ref{Parametric approximation of $l$-holonomic sections} and \ref{Parametric approximation of perp-holonomic sections} also hold in relative form with respect to a closed subset $A \subset M$. The statement is analogous to the one phrased in Remark \ref{remarks on l-holonomic approximation}.
\item Note that $s_z^{\perp}$ is a section of the bundle $X^{\perp}_{z} \to M$ associated to the hyperplane field $\tau_z \subset TI^m$, which varies with $z \in Z$. We can view the collection $s_z^{\perp}$ as a single section of the bundle $X_Z^{\perp} \to Z \times M$ whose fibre over $(z,m)$ is $X^{\perp}_z$.
\end{enumerate}
\end{remark}

\subsection{Applications to symplectic and contact topology}\label{Applications to symplectic topology}
We begin with an example which illustrates the main point. Suppose that $f:L^n \to W^{2n}$ is a Lagrangian embedding of a manifold $L$ into a symplectic manifold $(W, \omega)$. Denote by $\pi:\Lambda_n(W) \to W$ the Grassmannian bundle of Lagrangian planes in $TW$. The fibre of $\pi$ over a point $x \in W$ consists of the linear Lagrangian subspaces of the symplectic vector space $(T_xW, \omega_x)$. The Gauss map $G(df):L \to \Lambda_n(W)$ of the embedding $f$ is defined by $G(df)(q)=df(T_qM) \subset T_{f(q)}W$.
\begin{definition} A tangential rotation of $f$ is a compactly supported deformation $G_t:L \to \Lambda_n(W)$, $t \in [0,1]$, of the Gauss map $G_0=G(df)$ such that $\pi \circ G_t=f$. 
\end{definition}

\begin{figure}[h]
\includegraphics[scale=0.6]{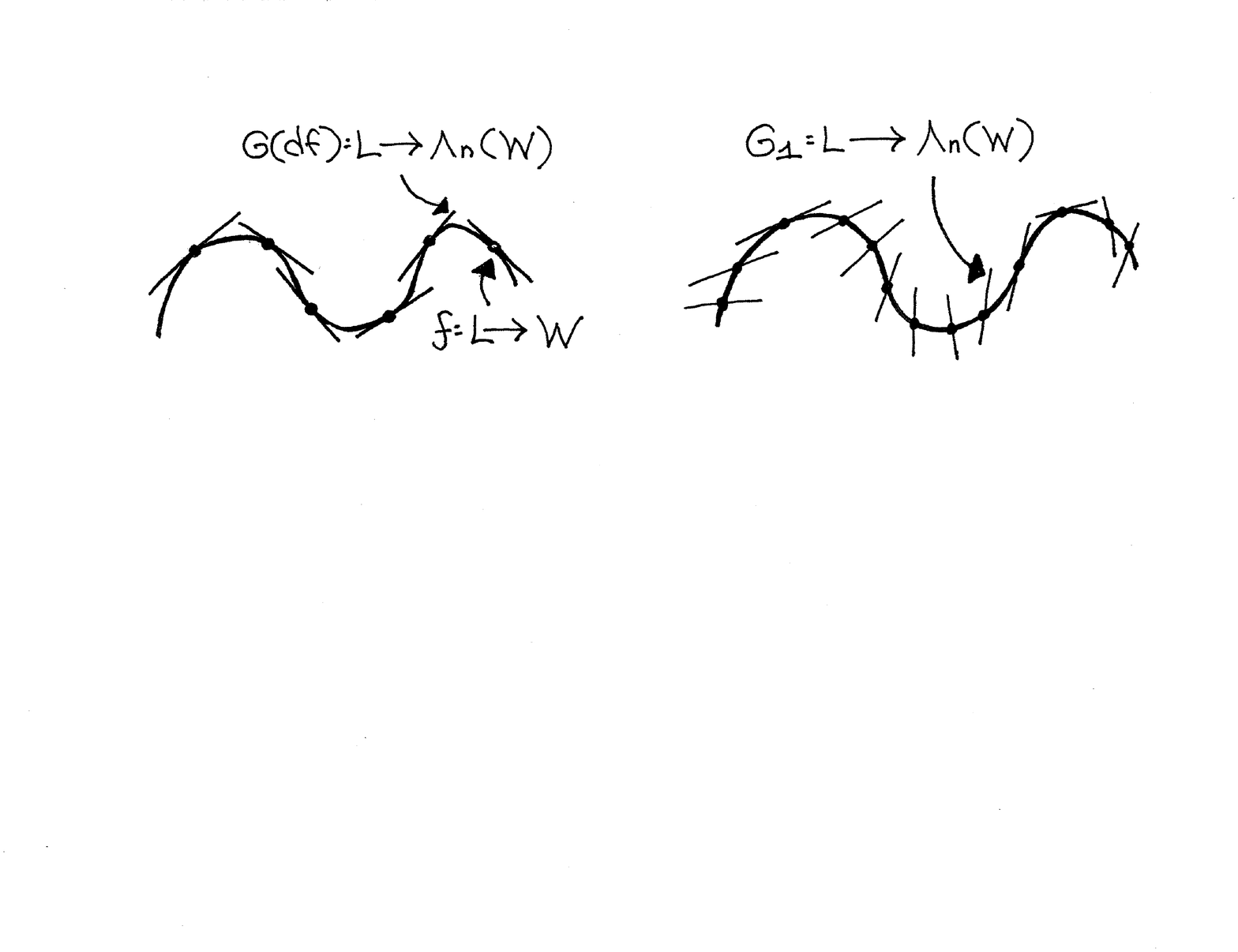}
\caption{A tangential rotation $G_t$ of a Lagrangian embedding $f$.}
\label{rotation}
\end{figure}

By compactly supported we mean that $G_t=G(df)$ for all $t \in [0,1]$ outside of a compact subset of $L$. The following approximation result is a simple corollary of our holonomic approximation lemma for $l$-holonomic sections.

\begin{theorem}\label{tangential rotation}
Let $K \subset L$ be a polyhedron of positive codimension and let $G_t:L \to \Lambda_n(M)$ be a tangential rotation of a Lagrangian embedding $f:L \to M$. Then there exists a compactly supported ambient Hamiltonian isotopy $\varphi_t:W \to W$ such that $G\big(d(\varphi_t \circ f) \big)$ is $C^0$-close to $G_t$ on $Op(K)$.
\end{theorem}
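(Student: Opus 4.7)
The approach is to convert the problem into a holonomic approximation question for $2$-jets of real-valued functions on $L$. Choose a Weinstein Lagrangian neighborhood $U$ of $f(L)$ in $W$, identifying $U$ symplectically with a neighborhood of the zero section in $T^*L$. An exact Lagrangian submanifold $C^1$-close to the zero section is the graph of $dh$ for a function $h:L\to\bR$, and its tangent plane at $(q, dh(q))$ is encoded by the Hessian of $h$ at $q$. Using a symplectic trivialization of $T(T^*L)$ along the zero section, the tangential rotation $G_t$ is rewritten (assuming for the moment that each $G_t(q)$ is transverse to the cotangent fiber over $q$) as a path of symmetric bilinear forms $H_t(q)$ on $T_qL$. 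This produces a path of formal $2$-jet sections
\[
s_t : L \to J^2(L,\bR), \qquad s_t(q) = (0,0,H_t(q)),
\]
whose $0$- and $1$-jet components vanish. Each $s_t$ is therefore $1$-holonomic, since $(0,0)$ is the $1$-jet of the zero function.

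Next, apply the parametric holonomic approximation lemma for $l$-holonomic sections (Theorem \ref{Parametric approximation of $l$-holonomic sections}) with $r=2$, $l=1$, parameter space $Z = [0,1]$, and the fibered polyhedron $[0,1]\times K \subset Z\times L$; a standard reparametrization of the $t$-interval (making $s_t$ stationary and equal to $0$ past $t=1$) ensures the required holonomicity near $\partial Z$. One obtains a family of $C^0$-small wiggles $F^t_s : L \to L$ together with a family of smooth functions $\hat h_t : L \to \bR$ such that $\hat h_0 \equiv 0$; the $2$-jet $j^2(\hat h_t)$ is $C^0$-close to $s_t$ on $Op\bigl(F^t_1(K)\bigr)$; the $1$-jet $j^1(\hat h_t)$ is $C^0$-small globally on $L$; and $\hat h_t$ together with $d\hat h_t$ vanishes outside $Op(K)$.

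To produce the Hamiltonian isotopy, on $T^*L$ take the time-dependent Hamiltonian $H(q,p,t) = -\partial_t \hat h_t(q)$ (pulled back via the projection $T^*L \to L$), cut off by a bump function supported in $U$ and equal to $1$ near the zero section. A direct computation with Hamilton's equations shows that the time-$t$ flow sends $(q,0)$ to $(q, d\hat h_t(q))$, so the image of $f(L)$ is the graph of $d\hat h_t$. Extending by the identity outside $U$ gives a compactly supported ambient Hamiltonian isotopy $\varphi_t$ of $W$. The tangent plane of $\varphi_t\circ f$ at $q$ is the tangent plane of the graph of $d\hat h_t$ at $(q, d\hat h_t(q))$, hence determined by the Hessian of $\hat h_t$ at $q$. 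Since $F^t_1$ is $C^0$-close to the identity, $K \subset Op\bigl(F^t_1(K)\bigr)$, so after shrinking $Op(K)$ (uniformly in $t$, using compactness of $[0,1]$) the Gauss map $G\bigl(d(\varphi_t\circ f)\bigr)$ is $C^0$-close to $G_t$ on $Op(K)$.

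The main obstacle is the transversality assumption used to set up the path of formal Hessians: for a general tangential rotation, the plane $G_t(q)$ may coincide with the cotangent fiber at certain $(t,q)$, and no global symplectic trivialization along the zero section can keep all Lagrangian planes in the image of $G_t$ transverse to a fixed complement. This is handled by working locally on a cover of $Op(K)$ on which a suitable symplectic rotation of the Weinstein model achieves fiber-transversality, and patching the resulting Hamiltonians via a partition of unity; alternatively, one can reformulate the argument with the $\perp$-holonomic version (Theorem \ref{Parametric approximation of perp-holonomic sections}), which encodes the tangent-plane information as a single pure order-$2$ partial derivative picked out by a hyperplane field and is well adapted to the geometry of the Lagrangian Grassmannian bundle here.
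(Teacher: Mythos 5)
Your overall strategy coincides with the paper's: pass to a Weinstein neighborhood, encode the rotation as a path of $2$-jet sections with vanishing $1$-jet component, apply the parametric $l$-holonomic approximation lemma with $r=2$, $l=1$, $Z=[0,1]$, and realize the resulting graphs of $d\hat h_t$ by an ambient Hamiltonian isotopy. However, there are two genuine gaps. The first is your treatment of the planes $G_t(q)$ that fail to be graphical over $T_qL$. Patching locally rotated Weinstein models ``via a partition of unity of the Hamiltonians'' does not work: the flow of $\sum_i \rho_i H^i$ is not obtained by patching the flows of the $H^i$, and the local jet-space sections $s_t$ built from different affine charts of the Lagrangian Grassmannian do not glue into a single section to which the lemma could be applied. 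The $\perp$-holonomic alternative is also beside the point, since the obstruction arises before any holonomic approximation, in the very identification of $G_t$ with a path of Hessians. The paper's fix is simpler and does work: since $G_0(q)=df(T_qL)$ is exactly the tangent plane to the zero section, compactness gives a subdivision $0=t_0<\cdots<t_N=1$ such that on each subinterval the planes remain graphical over $G_{t_i}(q)$; one then iterates, taking a fresh Weinstein neighborhood of the Lagrangian produced at the previous stage.

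The second gap is the final approximation step. Your assertion that ``$K\subset Op\bigl(F^t_1(K)\bigr)$ since $F^t_1$ is $C^0$-close to the identity'' is false: the neighborhood on which the holonomic approximation holds is a thin tube around the \emph{wiggled} set, of width much smaller than the amplitude of the wiggle, and in the explicit construction of Section \ref{holonomic approximation process} it does not contain $I^k$. Since your Hamiltonian flow fixes base points, $G\bigl(d(\varphi_t\circ f)\bigr)(q)$ is governed by the Hessian of $\hat h_t$ at $q$ itself, which is uncontrolled for $q\in K\setminus Op\bigl(F^t_1(K)\bigr)$. The paper avoids this by taking the embedding $f_t=d\hat h_t\circ F^t_1$, so that the $2$-jet is evaluated at $F^t_1(q)\in Op\bigl(F^t_1(K)\bigr)$, where the approximation does hold; the $C^0$-smallness of $F^t_1$ and continuity of $G_t$ then give closeness to $G_t(q)$. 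You should modify your realization of the isotopy accordingly (the resulting Hamiltonian isotopy moves base points, which is harmless since only the image Lagrangian and its Gauss map matter).
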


\begin{remark}\label{remark to tangential rotation} $ $
\begin{enumerate}
\item We can take $\varphi_t$ to be $C^0$-close to the identity $id_M$ on all of $M$.
\item Moreover, we can also arrange it so that $\varphi_t=id_M$ outside of an arbitrarily small neighborhood of $f(L)$ in $M$.
\item The statement holds in relative form. Namely, if $G_t=G(df)$ on $Op(A) \subset L$ for some closed subset $A \subset L$, then we can take $\varphi_t$ so that $\varphi_t=id_M$ on $Op \big( f(A) \big) \subset M$.
\item An analogous approximation result holds for tangential rotations of Legendrian embeddings into a contact manifold $(W^{2n+1}, \xi)$. In this case $\pi:\Lambda_n(W) \to W$ is the Grassmannian bundle whose fibre over a point $x \in W$ consists of the linear Lagrangian subspaces of the symplectic vector space $(\xi^{2n}_x, d \alpha_x)$, where $\xi=\ker(\alpha)$ near the point $x$.

\end{enumerate}
\end{remark}
\begin{proof}
Let $\cN \subset W$ be a Weinstein neighborhood of $f(L)$. This means that $\cN$ is a tubular neighborhood of $f(L)$ in $W$ which is symplectomorphic to a neighborhood of the zero section in the cotangent bundle $(T^*L, dp \wedge dq)$. If we fix a Riemannian metric on $L$ we can choose $\cN \simeq T^*_\delta L$ for some $\delta>0$, where $T^*_\delta L$ consists of the cotangent vectors $p \in T^*L$ such that $|| p ||< \delta$. 

For small time $t$ we can think of the tangential rotation $G_t$ as a family of sections $s_t: L \to J^2(L, \bR)$ such that $s_0=0$ and $s_t^{(1)}=0$. In fact, by first subdividing the time interval finely enough, we can reduce to the case where $s_t$ is defined for all $t \in [0,1]$. The point here is that the planes $G_t(q) \subset T_{f(q)}W \simeq T_{(q,0)}(T^*L)$ must remain graphical over $T_qL \subset T_{(q,0)}(T^*L)$. 

The parametric version of the holonomic approximation lemma for $l$-holonomic sections can be applied to produce an isotopy $F_t:L \to L$ and a family of functions $h_t:L\to \bR$, $h_0=0$,  such that $j^{2}(h_t)$ is $C^0$-close to $s_t$ on $Op\big(F_t(K) \big)$ and such that $j^1(h_t)$ is $C^0$-small on all of $L$. In particular, we may assume that $|| dh_t ||< \delta$ for all $t \in [0,1]$. Hence we can think of the composition $f_t=dh_t \circ F_t:L \to T^*_\delta L$ as a compactly supported exact homotopy of Lagrangian embeddings $f_t:L \to W$. Every such homotopy is induced by a compactly supported ambient Hamiltonian isotopy $\varphi_t:M \to M$ satisfying the required properties. \end{proof}

If we attempt to prove Theorem \ref{tangential rotation} using the classical holonomic approximation lemma \ref{classical holonomic approximation} instead, we run into the following difficulty. The functions $h_t$ produced by the holonomic approximation would a priori only be defined in open subsets $Op\big(F_t(K) \big) \subset L$. We would therefore need to extend $h_t$ to the whole of $L$ by hand. The most straightforward way of doing so is to choose a family of cutoff functions $\psi_t:L \to \bR$ supported on the domain of $h_t$ such that $\psi_t=1$ near $F_t(K)$. The product $\widetilde{h}_t= \psi_t \cdot h_t :L \to \bR$ is then well defined on all of $L$. It follows that the composition $f_t=d\widetilde{h}_t \circ F_t :L \to T^*L$ is an exact homotopy of Lagrangian embeddings whose Gauss map provides the desired approximation near $K$.

 If $|| d \widetilde{h}_t ||< \delta$ on all of $L$, then $f_t$ has image contained in $T^*_\delta L$ and we can think of $f_t$ as an exact homotopy of Lagrangian embeddings $f_t:L \to W$ as before. Observe, however, that there is no guarantee that $d\widetilde{h}_t=\psi_t dh_t + h_t d \psi_t$ has norm $< \delta$, because $ \psi_t$ will typically have a very large derivative. Indeed, the wiggling $F_t$ is quite dramatic, see Figure \ref{holonomic failure} for an illustration. Therefore, $f_t(L)$ might escape outside of our Weinstein neighborhood $T^*_\delta L \simeq \cN$. Hence $f_t$ does not correspond to an exact homotopy of Lagrangian embeddings into $W$ and our proof breaks down. Our holonomic approximation lemma for $l$-holonomic sections with $l=1$ precisely provides the necessary global control on the $1$-jet component so that this issue does not arise.
 \begin{figure}[h]
\includegraphics[scale=0.55]{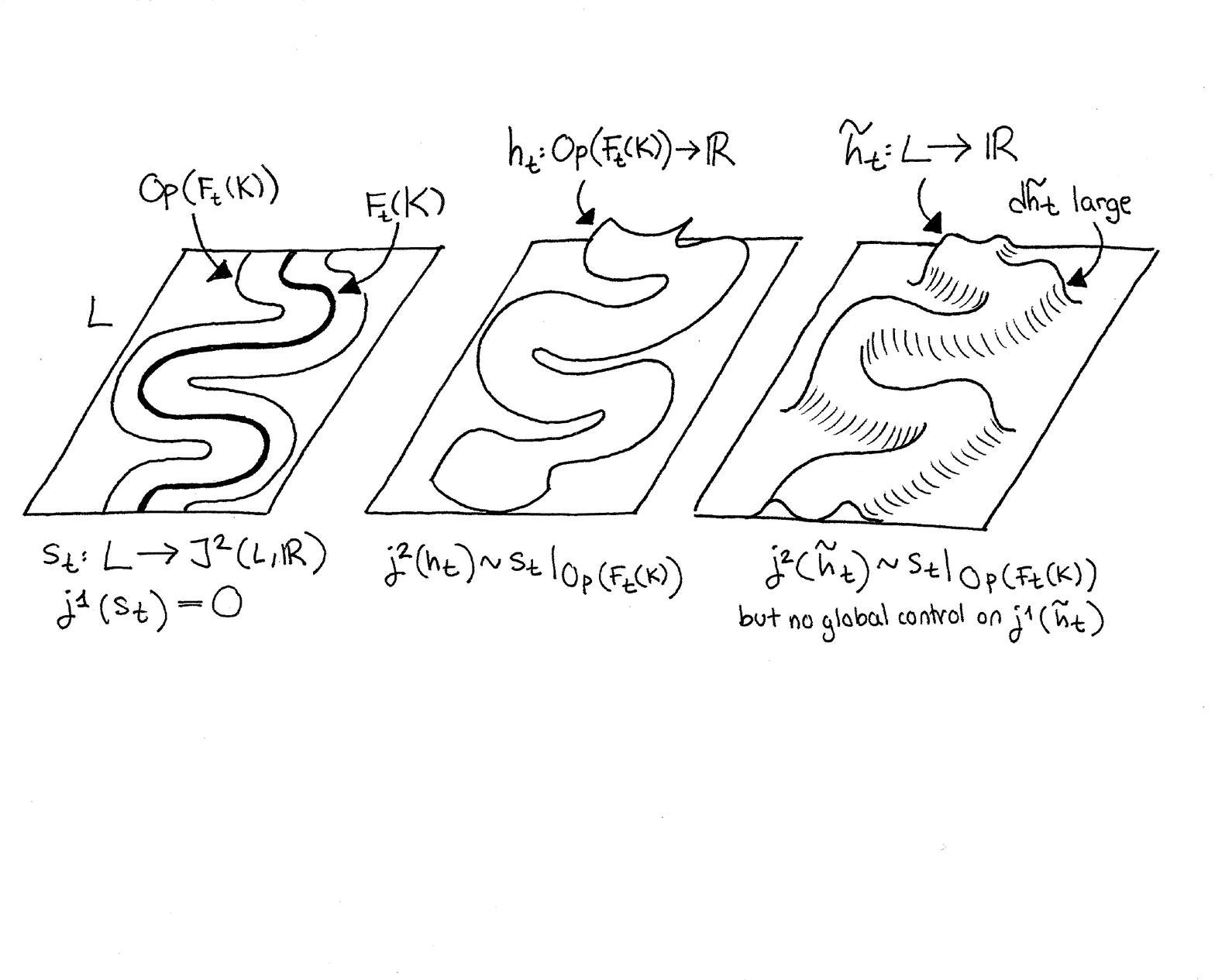}
\caption{The potential problem with the cutting off of $h_t$ after a naive application of the classical holonomic approximation lemma.}
\label{holonomic failure}
\end{figure}

 The parametric version of Theorem \ref{tangential rotation} also holds and is proved in the same way.
 
 \begin{theorem}\label{parametric tangential rotation}
Let $K \subset Z \times L$ be a fibered polyhedron of positive codimension and let $G^z_t:L \to \Lambda_n(M)$ be a family of tangential rotations of Lagrangian embeddings $f^z:L \to M$ parametrized by a compact manifold $Z$ such that $G^z_t=G(df^z)$ for $z \in Op(\partial Z)$. Then there exists a family of compactly supported ambient Hamiltonian isotopies $\varphi^z_t:W \to W$ such that $G\big(d(\varphi^z_t \circ f^z) \big)$ is $C^0$-close to $G^z_t$ on $Op(K)$ and such that $\varphi^z_t=id_M$ for $z \in Op(\partial Z)$.
\end{theorem}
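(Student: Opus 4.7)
The plan is to apply the parametric holonomic approximation lemma for $1$-holonomic sections (Theorem \ref{Parametric approximation of $l$-holonomic sections} with $l=1$) with an enlarged parameter space incorporating both the original parameter $z$ and the rotation time $t$. The non-parametric argument then generalizes essentially verbatim.

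First, by compactness of $Z$ and a parametric Weinstein neighborhood theorem, choose a uniform $\delta > 0$ and a smooth family of symplectic embeddings $\Psi_z : T^*_\delta L \hookrightarrow W$ with $\Psi_z$ sending the zero section to $f^z(L)$ along $f^z$; for $z \in Op(\partial Z)$ these may be chosen canonically from $f^z$ so that the boundary hypothesis $G^z_t = G(df^z)$ is preserved throughout what follows. Via $\Psi_z$, each tangential rotation $G^z_t$ translates, for small $t$, into a section $s^z_t : L \to J^2(L, \bR)$ whose $0$-jet and $1$-jet components vanish identically, and which equals $0$ for $z \in Op(\partial Z)$. After subdividing $[0,1]$ finely enough and iterating the construction on successive subintervals, we may assume the sections $s^z_t$ remain graphical and are defined for all $t \in [0,1]$.

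Now treat $(z,t) \in Z' := Z \times [0,1]$ as a single parameter, with $\partial Z' = (\partial Z \times [0,1]) \cup (Z \times \{0\})$; the family $\{s^z_t\}_{(z,t) \in Z'}$ is $1$-holonomic throughout and is trivially holonomic on $Op(\partial Z')$. The product $K' := K \times [0,1]$ is a fibered polyhedron of positive codimension in $Z' \times L$ with $K'_{(z,t)} = K_z$. Applying Theorem \ref{Parametric approximation of $l$-holonomic sections} with $l = 1$ produces a family of isotopies $F^{z,t}_s : L \to L$ (with isotopy parameter $s \in [0,1]$) and functions $h^z_t : L \to \bR$, both trivial for $(z,t) \in Op(\partial Z')$, such that $j^2(h^z_t)$ is $C^0$-close to $s^z_t$ on $Op\bigl(F^{z,t}_1(K_z)\bigr)$ and such that $j^1(h^z_t)$ is globally $C^0$-small; in particular we may arrange $\|dh^z_t\| < \delta$ uniformly in $z$ and $t$.

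Define $\tilde{f}^z_t := \Psi_z \circ dh^z_t \circ F^{z,t}_1 : L \to W$. This is a family of compactly supported exact Lagrangian embeddings which equals $f^z$ at $t=0$ and for $z \in Op(\partial Z)$, and whose Gauss map $C^0$-approximates $G^z_t$ on $Op(K_z)$. By the parametric version of the classical fact that compactly supported exact Lagrangian isotopies are induced by compactly supported ambient Hamiltonian isotopies, there is a family $\varphi^z_t : W \to W$ with $\varphi^z_t \circ f^z = \tilde{f}^z_t$ and $\varphi^z_t = id_W$ for $z \in Op(\partial Z)$. The only mildly subtle point is constructing the family $\Psi_z$ smoothly and compatibly with the boundary behavior; the core geometric content is entirely packaged inside Theorem \ref{Parametric approximation of $l$-holonomic sections}, and the Weinstein step is standard.
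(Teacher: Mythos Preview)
Your proposal is correct and follows essentially the same approach as the paper, which simply states that Theorem \ref{parametric tangential rotation} ``is proved in the same way'' as Theorem \ref{tangential rotation}: both arguments translate the tangential rotation into a family of $1$-holonomic sections of $J^2(L,\bR)$, invoke Theorem \ref{Parametric approximation of $l$-holonomic sections} with $l=1$ over the enlarged parameter space incorporating both $z$ and $t$, and then realize the resulting exact Lagrangian isotopy by an ambient Hamiltonian isotopy via a (parametric) Weinstein neighborhood. One small wrinkle: your $\partial Z'$ omits $Z\times\{1\}$, where $s^z_1$ need not be holonomic, but this is harmless since Theorem \ref{Parametric approximation of $l$-holonomic sections} (via its relative form, Remark \ref{remarks on l-holonomic approximation}) only needs the boundary condition on the part of $\partial Z'$ where you actually want the output fixed---and indeed the paper's own proof of Theorem \ref{tangential rotation} tacitly uses the same relaxation.
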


\begin{remark}
Analogous observations to the ones made in Remark \ref{remark to tangential rotation} apply.
\end{remark}

The following $h$-principle for directed embeddings follows immediately from the above approximation results. First, we recall the following definition of Gromov.  
\begin{definition} Given subsets $D \subset \Lambda_n(W)$ and $S \subset L$, we say that a Lagrangian embedding $f:L \to W$ is $D$-directed along $S$ if $G(df)(S) \subset D$.
\end{definition}

\begin{theorem}\label{A-directed}
Let $f:L \to W$ be a Lagrangian embedding, let $K \subset L$ be a polyhedron of positive codimension and let $D \subset \Lambda_n(W)$ be an open subset. Suppose that there exists a tangential rotation $G_t:L \to \Lambda_n(W)$ of $f$ such that $G_1(K) \subset D$. Then there exists a compactly supported Hamiltonian isotopy $\varphi_t:W \to W$ such that $\varphi_1 \circ f$ is $D$-directed along $Op(K)$. 
\end{theorem}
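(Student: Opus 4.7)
The plan is to apply Theorem \ref{tangential rotation} directly to the given tangential rotation $G_t$ and then exploit the openness of $D$. First I would note that by continuity of $G_1 : L \to \Lambda_n(W)$ and openness of $D \subset \Lambda_n(W)$, the preimage $G_1^{-1}(D)$ is an open neighborhood of $K$ in $L$, so after shrinking $Op(K)$ we may assume $G_1\bigl(Op(K)\bigr) \subset D$. Since the rotation $G_t$ is compactly supported, the restriction of $G_1$ to the compact piece of $K$ where $G_t$ is not stationary takes values in a compact subset of $D$, and hence lies at some positive distance $\varepsilon > 0$ from $\Lambda_n(W) \setminus D$ with respect to any fixed auxiliary Riemannian metric on $\Lambda_n(W)$.

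Next, I would invoke Theorem \ref{tangential rotation} with approximation tolerance smaller than $\varepsilon$, obtaining a compactly supported ambient Hamiltonian isotopy $\varphi_t : W \to W$ for which $G\bigl(d(\varphi_1 \circ f)\bigr)$ is $\varepsilon$-close to $G_1$ on $Op(K)$. On the compact part of $Op(K)$ where the rotation is genuinely nontrivial, this $\varepsilon$-closeness forces $G\bigl(d(\varphi_1 \circ f)\bigr)$ to remain inside $D$. Outside the support of $G_t$ we have $G_1 = G(df)$, and by Remark \ref{remark to tangential rotation} we can arrange $\varphi_t$ to be trivial there, so that $G\bigl(d(\varphi_1 \circ f)\bigr) = G(df) = G_1 \in D$ holds automatically. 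Combining the two cases shows that $\varphi_1 \circ f$ is $D$-directed along $Op(K)$, as required.

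I do not anticipate any real obstacle: the statement is essentially a routine consequence of Theorem \ref{tangential rotation} together with the openness of $D$. The only bookkeeping is to choose the approximation tolerance in Theorem \ref{tangential rotation} strictly smaller than the distance from the compact set $G_1(K \cap \mathrm{supp}(G_t))$ to $\partial D$, which is always possible because the $C^0$-approximation provided by that theorem can be made arbitrarily fine. All of the genuine difficulty lies upstream in Theorem \ref{tangential rotation}, which in turn relies on the holonomic approximation lemma for $l$-holonomic sections applied with $l = 1$.
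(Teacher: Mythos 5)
Your argument is correct and is exactly the route the paper intends: the paper offers no written proof, asserting only that the theorem ``follows immediately from the above approximation results,'' i.e.\ from Theorem \ref{tangential rotation} together with the openness of $D$. Your handling of the two regions (a compactness/positive-distance argument on $\mathrm{supp}(G_t)$, and the relative form of Theorem \ref{tangential rotation} where $G_t$ is stationary) supplies precisely the routine details the paper leaves implicit.
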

\begin{remark} $ \, $
\begin{enumerate}
\item This $h$-principle also holds in $C^0$-close, relative and parametric versions. We leave it to the reader to formulate the appropriate statements.
\item We can choose the Hamiltonian isotopy so that $\varphi_t=id_W$ outside of an arbitrarily small neighborhood of $f(K)$ in $W$.
\item An analogous $h$-principle holds for Legendrian embeddings into a contact manifold $(W^{2n+1}, \xi)$ that are $D$-directed along a polyhedron of positive codimension. 
\end{enumerate}

\end{remark}

Analogous problems in geometric topology have been studied by several authors. In \cite{G86}, Gromov proved an $h$-principle for $D$-directed smooth embeddings of an open manifold into some ambient manifold which holds for any open subset $D$ of the Grassmannian of the ambient manifold. See \cite{S00} for a discussion of Gromov's argument. Rourke and Sanderson gave two independent proofs of this result in \cite{RS97} and \cite{RS00}. Another proof was obtained by Eliashberg and Mishachev using their holonomic approximation lemma \cite{EM01}. For embeddings of a closed manifold one cannot hope to prove an $h$-principle for $D$-directed embeddings when $D$ is an arbitrary open subset of the Grassmannian. However, for certain special subsets $D$, called ample, Gromov proved in \cite{G73} and \cite{G86} that an $h$-principle does hold. In a different direction, Eliashberg and Mishachev showed in \cite{EM09} that an $h$-principle for $D$-directed embeddings of a closed manifold hold for an arbitrary open $D$, but provided that we relax the notion of an embedding to that of a wrinkled embedding. In \cite{AG15} we prove a symplectic and contact analogue of this last result using the tools developed in the present paper.
 
 \begin{example}
 Let $\eta \subset TW$ be a distribution of $k$-planes in a symplectic or contact manifold $W$. Set $D_x=\{ P_x \in \Lambda_n(W)_x: \, \, P_x \pitchfork \eta_x \}$ for each $x \in W$. Then $D=\bigcup_{x\in W} D_x$ is an open subset of $\Lambda_n(W)$. We obtain a full $h$-principle for Lagrangian or Legendrian embeddings which are transverse to an ambient distribution near a given subset of positive codimension. In the particular case where $\eta=\ker(d \rho)$ for $\rho:W \to B$ a Lagrangian or Legendrian fibration, we can rephrase the result as an $h$-principle for Lagrangian or Legendrian embeddings whose front is nonsingular along a given subset of positive codimension.
 \end{example}
 
The main application (and source of motivation) for our holonomic approximation lemma for \mbox{$\perp$-holonomic sections}, as well as its parametric version, is given in \cite{AG15}. The control on the $\perp$-jet component is a key ingredient in the proof of the $h$-principle for the (global) simplification of singularities of Lagrangian and Legendrian fronts. Indeed, when attempting to apply a holonomic approximation argument near the $(n-1)$-skeleton of a Lagrangian or Legendrian submanifold, difficulties similar to the one illustrated in the proof of Theorem \ref{tangential rotation} above inevitably arise. The situation is in fact much more subtle because we need to respect a certain decomposition of a tangential rotation into so-called simple tangential rotations. Theorems \ref{Approximation of perp-holonomic sections} and \ref{Parametric approximation of perp-holonomic sections} provide the precise control needed to make the proof work. 
 
\subsection{Idea of the proof}

The strategy of proof is to carry out a sequence of reductions which simplify our refined holonomic approximation lemmas for $l$- and $\perp$-holonomic sections to a problem described by a concrete local model. We can then carefully keep track of the geometry behind the holonomic approximation process in this carefully chosen model and establish the necessary estimates to achieve the desired global control. The outline of the paper is roughly as follows. 

In Section \ref{Localization of the problem} we reduce our global results to the local relative statements corresponding to the jet space $J^r(\bR^m,\bR^n)$ over the unit cube $I^m=[-1,1]^m$. In Section \ref{Geometry of jet spaces} we study the space $J^r(\bR^m,\bR^n)$ and reduce the holonomic approximation lemma for $l$-holonomic sections to the holonomic approximation lemma for $\perp$-holonomic sections. For a section which is $\perp$-holonomic with respect to a hyperplane field $\tau$, we construct in Section \ref{holonomic approximation with controlled cutoff} a holonomic approximation with controlled $\perp$-component by wiggling the polyhedron $K$ in a way such that the wiggles are parallel to the hyperplanes in $\tau$. However, we cannot implement such a wiggling near the region where $\tau$ is almost tangent to $K$. A preliminary adjustment is therefore necessary in this region. This adjustment is performed in Section \ref{transversality adjustment}.  

We should note that it is possible to prove the holonomic approximation lemma for $l$-holonomic sections in a more direct manner. One can extend by hand the holonomic approximation formulas written down in \cite{EM02} or appeal to abstract extension results to reach the desired conclusion. However, we choose to deduce the holonomic approximation lemma for $l$-holonomic sections as a corollary of the holonomic approximation lemma for $\perp$-holonomic sections. 

The main reason for doing so is that such a reduction involves decompositions of $(r-1)$-holonomic sections into so-called primitive sections, which we define in Section \ref{Holonomic trivialization}. A similar strategy appears in Gromov's work \cite{G86} and is developed further in Spring's book \cite{S98}, related to the construction of iterated convex hull extensions in the theory of convex integration. Moreover, primitive sections play a crucial role the proof of our $h$-principle for the simplification of singularities of Lagrangian and Legendrian fronts  \cite{AG15}, where they correspond to a particularly simple type of tangential rotation. We hope that the general idea of working one pure partial derivative at a time may have further applications to the philosophy of the $h$-principle and so have attempted to present the elements of the strategy as clearly as possible.

\subsection{Acknowledgements}
I am very grateful to my advisor Yasha Eliashberg for many insightful conversations and to Fran Presas from whom I first learnt about the philosophy of the $h$-principle. I am indebted to the ANR Microlocal group who held a workshop in January 2017 to dissect an early version of the paper and in particular to H\'el\`ene Eynard-Bontemps for spotting various errors and making useful suggestions for fixing them. I am also thankful to Nikolai Mishachev for reading the first draft of this paper. 

\section{Localization of the problem}\label{Localization of the problem}
\subsection{Holonomic trivialization}\label{Holonomic trivialization}
For a general fibre bundle $p:X^{m+n} \to M^m$, the bundle of $r$-jets $p^r:X^{(r)}\to M$ can be messy to work with globally. However, global $h$-principle type problems can often be reduced to a local relative statement. In this section we explain how this reduction is accomplished for our refined holonomic approximation lemmas. We choose to work with the unit cube $I^m=[-1,1]^m \subset \bR^m$ as our local model. In what follows we use the language of \, $l$- and $\perp$-holonomic sections introduced in Section \ref{Improved holonomic approximation}. We begin by recalling from \cite{EM02} the following simple but crucial observation.

\begin{remark}[holonomic trivialization] \label{holonomic trivialization} Let $\hat{s}:M \to X^{(r)}$ be a holonomic section of $p^r$ and let $Q \subset M$ be an embedded cube $Q \simeq I^m$. Then there exists a neighborhood $\cN \subset X^{(r)}$ of the image Im$(\hat{s})$ such that $(p^r)^{-1}(Q) \cap \cN \simeq J^r(\bR^m, \bR^n)|_{I^m}$.
\end{remark} 
\begin{proof} Since the section $\hat{s}$ is holonomic, we have $\hat{s}=j^r(h)$ for some section $h:M \to X$ of $p$. Observe that the fibration $p:X \to M$ is trivial over the contractible subset $Q$. Hence a neighborhood of the image $h(Q)\subset X$ in $p^{-1}(Q)$ is diffeomorphic to $Q \times \bR^n$. It follows that a neighborhood of the image $\hat{s}(Q) \subset X^{(r)}$ in $(p^r)^{-1}(Q)$ is diffeomorphic to $J^r(Q, \bR^n) \simeq J^r(\bR^m, \bR^n)|_{I^m}$. \end{proof}

Under the above identification, sections $s : M \to \cN \subset X^{(r)}$ such that $s=\hat{s}$ on $Op\big(M \setminus \text{int}(Q)\big)$ correspond to sections $\sigma:I^m \to J^r(\bR^m, \bR^n)$ such that $\sigma=0$ on $Op(\partial I^m)$. The section $s$ is holonomic if and only if the section $\sigma$ is holonomic. The section $\hat{s}$ itself corresponds to the zero section $\sigma=0$. 

Similarly, $l$-holonomic sections $s:M \to  \cN \subset X^{(r)}$ such that $s=\hat{s}$ on $Op\big(M \setminus \text{int}(Q)\big)$ and such that $s^{(l)}=\hat{s}^{(l)}$ on all of $M$ correspond to sections $\sigma : I^m \to J^r(\bR^m, \bR^n)$ such that $\sigma=0$ on $Op(\partial I^m)$ and such that $\sigma^{(l)}=0$ on all of $I^m$. 

Fix a hyperplane field $\tau \subset TM$. Then $\perp$-holonomic sections $s:M \to \cN \subset X^{(r)}$ such that $s=\hat{s}$ on $Op\big(M \setminus \text{int}(Q)\big)$ and such that $s^{\perp}=\hat{s}^{\perp}$ on all of $M$ (with respect to $\tau$) correspond to sections $\sigma:I^m \to J^r(\bR^m, \bR^n)$ such that $\sigma=0$ on $Op(\partial I^m)$ and such that $\sigma^{\perp}=0$ on all of $I^m$ (with respect to the hyperplane field associated to $\tau$ under the identification $Q \simeq I^m$). This last remark motivates the following definition, which we will use repeatedly in what follows.
\begin{definition}\label{primitive definition} A section $\sigma: I^m \to J^r(\bR^m, \bR^n)$ is called primitive with respect to a hyperplane field $\tau \subset TI^m$ if $\sigma^{\perp}=0$.
\end{definition}

\subsection{The local relative statements}\label{The local relative statements}
The global versus local dictionary described in the previous subsection leads us to formulate the following local relative versions of our main results. We first state the non-parametric versions.
\begin{theorem}[localized holonomic approximation lemma for $l$-holonomic sections]\label{localized holonomic approximation lemma for $l$-holonomic sections}
Fix $k<m$. Let $\sigma:I^m \to J^r(\bR^m, \bR^n)$ be a section such that the following properties hold.
\begin{itemize}
\item $\sigma=0$ on $Op(\partial I^m)$.
\item $\sigma^{(l)}=0$ on all of $I^m$ for some $l<r$.
\end{itemize}
Then there exists an isotopy $F_t:I^m \to I^m$ and a holonomic section $\hat{\sigma} :I^m \to J^r(\bR^m, \bR^n)$ such that the following properties hold.
\begin{itemize}
\item $\hat{\sigma}$ is $C^0$-close to $\sigma$ on $Op\big( F_1(I^k) \big)$.
\item $\hat{\sigma}^{(l)}$ is $C^0$-small on all of $I^m$.
\item $F_t$ is $C^0$-small.
\item $F_t=id_{I^m}$ and $\hat{\sigma}=0$ on $Op(\partial I^m)$.
\end{itemize}
\end{theorem}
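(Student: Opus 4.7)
The plan is to deduce the theorem from iterated application of the localized version of the $\perp$-holonomic approximation lemma (the local form of Theorem~\ref{Approximation of perp-holonomic sections}), combined with a Waring-type decomposition of the pure order-$r$ data into ``pure power'' contributions, in the spirit of working one partial derivative at a time advertised in the introduction.

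First I would reduce to the base case $l = r - 1$ by induction on $r - l$. Assuming the statement for $l$-holonomic sections of $J^{r-1}(\bR^m,\bR^n)$, apply the inductive hypothesis to the $(r-1)$-jet component $\sigma^{(r-1)}$ of the given section, which is $l$-holonomic with vanishing $l$-jet. This produces an isotopy $F^{(1)}_t$ and a holonomic section $j^{r-1}(h):I^m\to J^{r-1}$ that approximates $\sigma^{(r-1)}$ near $F^{(1)}_1(I^k)$ with globally $C^0$-small $l$-jet. Replace $\sigma$ by the difference $\sigma - j^r(h)$, suitably cut off in the affine fibre of $p^r_{r-1}$; the result has $C^0$-small $(r-1)$-jet component, reducing the problem to the base case. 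Since the cutoff takes place only in the pure order-$r$ fibre direction of $p^r_{r-1}$, it does not spoil the $(r-1)$-jet control.

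For the base case $l = r - 1$, the section $\sigma$ has $\sigma^{(r-1)}=0$, so it takes values in the affine fibre of $p^r_{r-1}$ over the zero section, which is canonically the bundle $\mathrm{Sym}^r(\bR^m)^* \otimes \bR^n$. By a Waring-type identity, there exist finitely many covectors $\xi_1,\dots,\xi_N \in (\bR^m)^*$ (depending only on $m$ and $r$) whose $r$-th symmetric powers $\xi_\alpha^{\otimes r}$ span $\mathrm{Sym}^r(\bR^m)^*$, so one may write
\[
\sigma(x) \;=\; \sum_{\alpha=1}^{N} a_\alpha(x)\,\xi_\alpha^{\otimes r}
\]
for smooth $a_\alpha:I^m\to\bR^n$. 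Each summand $\sigma_\alpha(x) := a_\alpha(x)\,\xi_\alpha^{\otimes r}$ is primitive with respect to the constant hyperplane field $\tau_\alpha := \ker\xi_\alpha$, since $\xi_\alpha^{\otimes r}$ vanishes whenever any argument lies in $\tau_\alpha$; hence $\sigma_\alpha$ is $\perp$-holonomic with respect to $\tau_\alpha$. I would then apply the localized $\perp$-holonomic approximation lemma one summand at a time: the $\alpha$-th step produces a holonomic $\hat\sigma_\alpha$ and isotopy $F^{(\alpha)}_t$ approximating $\sigma_\alpha$ near $F^{(\alpha)}_1(I^k)$ with globally $C^0$-small $\perp_{\tau_\alpha}$-component, and any residual error in the other directions is absorbed into the next summand by re-expanding the remainder in the fixed basis $\{\xi_\alpha^{\otimes r}\}$.

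The main obstacle is controlling the accumulated errors across the $N$ iterations and patching the $N$ wigglings into a single $C^0$-small isotopy $F_t$ on $I^m$. I would arrange successive $F^{(\alpha)}_t$ to be supported in geometrically thinner neighborhoods of $I^k$ and choose closeness parameters in a geometric sequence $\varepsilon/2^\alpha$, so that the composition remains $C^0$-small and $\hat\sigma = \sum_\alpha \hat\sigma_\alpha$ is holonomic on a single neighborhood $Op\bigl(F_1(I^k)\bigr)$. A secondary subtlety is that the $l$-jet bound produced by the $\perp$-holonomic lemma at each step controls the $\perp_{\tau_\alpha}$-component rather than the full $l$-jet; since the $l$-jet can be recovered linearly from the collection of $\perp_{\tau_\alpha}$-components through the fixed Waring decomposition, a geometric series of closeness parameters suffices to yield the required global $l$-jet bound.
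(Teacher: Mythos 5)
Your proposal follows essentially the same route as the paper: an induction on $r-l$ reducing to the case $l=r-1$, then a Waring-type decomposition of the pure order-$r$ part into $r$-th powers of linear forms (the paper uses the explicit identity $X_1\cdots X_r = \tfrac{(-1)^r}{r!}\sum_U(-1)^{|U|}(\sum_{u\in U}X_u)^r$, giving primitive summands for the constant hyperplane fields $\ker(dx_{\beta_1}+\cdots+dx_{\beta_k})$), followed by an iterated application of the localized $\perp$-holonomic lemma, composing the isotopies and summing the holonomic pieces. The only cosmetic differences are in bookkeeping: the paper pulls each successive summand back by the previously constructed isotopy rather than re-expanding remainders, and it gets the global $(r-1)$-jet bound directly from the fact that the $\perp$-component contains the $(r-1)$-jet, with the error controlled simply because the number $N$ of summands depends only on $m$ and $r$.
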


\begin{theorem}[localized holonomic approximation lemma for $\perp$-holonomic sections]\label{localized holonomic approximation lemma for perp-holonomic sections}
Fix $k<m$. Let $\sigma:I^m \to J^r(\bR^m, \bR^n)$ be a section such that the following properties hold.
\begin{itemize}
\item $\sigma=0$ on $Op(\partial I^m)$.
\item $\sigma^{\perp}=0$ on all of $I^m$ with respect to some hyperplane field $\tau \subset TI^m$.
\end{itemize}
Then there exists an isotopy $F_t:I^m \to I^m$ and a holonomic section $\hat{\sigma} :I^m \to J^r(\bR^m, \bR^n)$ such that the following properties hold.
\begin{itemize}
\item $\hat{\sigma}$ is $C^0$-close to $\sigma$ on $Op\big( F_1(I^k) \big)$.
\item $\hat{\sigma}^{\perp}$ is $C^0$-small on all of $I^m$.
\item $F_t$ is $C^0$-small.
\item $F_t=id_{I^m}$ and $\hat{\sigma}=0$ on $Op(\partial I^m)$.
\end{itemize}
\end{theorem}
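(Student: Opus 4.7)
The strategy is to exploit the specific structure of a primitive section (Definition \ref{primitive definition}). Since $\sigma^\perp=0$, in any local chart around a point of $I^m$ in which $\tau=\ker(dx^m)$, the section $\sigma$ is determined by a single scalar $\psi$ encoding the pure order-$r$ partial derivative in the direction $\partial_{x^m}$ transverse to $\tau$. The holonomic approximant $\hat\sigma=j^r(h)$ we seek is of the same type: if $h$ depends only on $x^m$ locally, then all mixed $r$-th partial derivatives of $h$ vanish identically and its $(r-1)$-jet is small, so $\hat\sigma^\perp=0$ in the chart, which is exactly what is needed to make $\hat\sigma^\perp$ globally $C^0$-small after patching. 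The target is therefore to arrange that, near $F_1(I^k)$, the one-variable function $\phi^{(r)}(x^m)$ matches $\psi$ to arbitrary $C^0$-precision.

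With this Ansatz in mind, the proof proceeds in two stages. First, the preliminary transversality adjustment of Section \ref{transversality adjustment}: a $C^0$-small isotopy, equal to the identity on $Op(\partial I^m)$, deforms $I^k$ into a polyhedron $K'$ which is transverse to $\tau$ pointwise, i.e.\ $T_xK'+\tau_x=\mathbb{R}^m$ for every $x\in K'$. This step does not change $\sigma$, and it is necessary because if $K$ is tangent to $\tau$ anywhere, then wiggling inside leaves of $\tau$ merely slides $K$ along itself and cannot vary $x^m$. Second, the main construction of Section \ref{holonomic approximation with controlled cutoff}: an isotopy $F_t$ is built whose infinitesimal generator lies in $\tau$, so that $F_t^*\tau$ remains $C^0$-close to $\tau$ and $F_1(K')$ is a fine wiggle of $K'$ inside leaves of $\tau$. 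Transversality of $K'$ to $\tau$ guarantees that the $x^m$-values along $F_1(K')$ range densely enough that one can choose a one-variable $\phi$, patched chart-by-chart via cutoffs, with $\phi^{(r)}(x^m)$ interpolating $\psi$ on $F_1(K')$ to any prescribed $C^0$-precision. The essential point is that the cutoff is performed at the fibre level of the affine bundle $p^r_\perp:J^r(\mathbb{R}^m,\mathbb{R}^n)\to J^\perp(\mathbb{R}^m,\mathbb{R}^n)$, so it alters only the pure normal-to-$\tau$ order-$r$ derivative and leaves the $\perp$-component close to zero globally.

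The main obstacle is the interplay between the two stages: one needs $K'$ transverse to $\tau$ for the leaf-preserving wiggling to be effective, and simultaneously the cutoff must be chosen so as not to spoil the global control on $\hat\sigma^\perp$. Transversality is subtle because the tangency locus $\{x\in K:T_xK\subset\tau_x\}$ has generic codimension $k$ in $K$ and hence need not be empty after an arbitrary $C^0$-small perturbation; Section \ref{transversality adjustment} handles this by exploiting the cubical (polyhedral) structure of $K$ together with the fact that the adjustment is allowed to be only $C^0$-small rather than $C^1$-small, which gives enough flexibility to collapse the tangency locus. Once $K'$ is transverse to $\tau$, the replacement of the classical pointwise cutoff by a fibre-level cutoff along $p^r_\perp$ is the key technical step that turns the classical holonomic approximation estimate, which is local near $F_1(K)$, into the global control of $\hat\sigma^\perp$ demanded by the theorem.
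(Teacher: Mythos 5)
Your second stage is broadly in the spirit of the paper's Section \ref{holonomic approximation with controlled cutoff} (wiggle $I^k$ in directions parallel to $\tau$, interpolate Taylor polynomials along the wiggles, and cut off using the fact that only the pure order-$r$ derivative transverse to $\tau$ is large), but your first stage contains a genuine gap. You propose to replace $I^k$ by a $C^0$-small isotopic copy $K'$ that is \emph{transverse to $\tau$ pointwise}. Such a $K'$ does not exist in general: tangency of a $k$-dimensional polyhedron to a hyperplane field is a codimension-$k$ condition, so the tangency locus consists generically of isolated points which are stable under perturbation and can carry a nonzero index (already for a circle in $I^2$ and a line field $\tau$ there are always at least two tangencies, for elementary topological reasons). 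Your suggested escape --- that $C^0$-smallness rather than $C^1$-smallness gives enough room to ``collapse the tangency locus'' --- is not justified and is not what the paper does; no amount of $C^0$-wiggling removes an essential tangency, and a wild $C^0$-small isotopy would in any case destroy the control on $\hat\sigma^\perp$ that you need later.

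What the paper actually does in Section \ref{transversality adjustment} is different in kind: it subdivides $I^k$ into cubes so fine that $\tau$ is nearly constant on each, and on the faces where $\tau$ is \emph{almost tangent} it makes no isotopy at all. Instead, Lemma \ref{calculation for almost tangent} constructs the holonomic approximation there directly by an explicit cutoff formula $h(x)=\psi_\delta(x_m)\,x_m^r\,v(x)$, whose $C^0$-error and whose $\perp$-component are both bounded by $C\,\|\sigma\|_{C^r}\bigl(\measuredangle(\tau,H)+\delta\bigr)$; this is acceptable precisely because on those faces the angle is small. Only on the remaining faces, where $\tau$ is quantitatively transverse, is the wiggling model of Theorem \ref{final reformulation} invoked. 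So the dichotomy is ``almost tangent $\Rightarrow$ approximate by hand with error controlled by the angle'' versus ``transverse $\Rightarrow$ wiggle,'' not ``first achieve transversality everywhere.'' Without this (or an equivalent mechanism for handling the unavoidable tangency locus), your argument does not go through. A secondary imprecision: the cutoff in the transverse model is an ordinary multiplication of the interpolating function $g$ by a bump function $\phi$, and the global smallness of $\hat\sigma^\perp$ comes from the explicit estimates $\|\partial_\alpha(\phi g)\|\le C\|\sigma\|_{C^r}(\delta/\varepsilon)^{r-N}$ for multi-indices with fewer than $r$ derivatives in the $\tau$-normal direction, not from a cutoff ``at the fibre level of $p^r_\perp$.''
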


The global holonomic approximation lemmas for $l$- and $\perp$-holonomic sections \ref{Approximation of $l$-holonomic sections} and \ref{Approximation of perp-holonomic sections} follow from the local relative statements \ref{localized holonomic approximation lemma for $l$-holonomic sections} and \ref{localized holonomic approximation lemma for perp-holonomic sections} by induction over the skeleton of the polyhedron $K$, working one cube at a time. At each step we use the holonomic trivialization \ref{holonomic trivialization} to reduce the global problem to a local problem. Observe that the relative versions of the global holonomic approximation lemmas for $l$- and $\perp$-holonomic sections (see Remark \ref{remarks on l-holonomic approximation}) also follow from the above local relative statements. 

Similarly, the parametric global holonomic approximation lemmas \ref{Parametric approximation of $l$-holonomic sections} and \ref{Parametric approximation of perp-holonomic sections}, including the corresponding relative versions, follow from the parametric local relative statements phrased below. In this case we also localize with respect to the parameter space, setting $Z=I^q$, the unit $q$-dimensional cube $[-1,1]^q$. 

\begin{theorem}[parametric localized holonomic approximation lemma for $l$-holonomic sections]\label{parametric localized holonomic approximation lemma for $l$-holonomic sections}
Fix $k<m$. Let $\sigma_z:I^m \to J^r(\bR^m, \bR^n)$ be a family of sections parametrized by the unit cube $I^q$ such that the following properties hold.
\begin{itemize}
\item $\sigma_z=0$ on $Op(\partial I^m)$.
\item $\sigma^{(l)}_z=0$ on all of $I^m$ for some $l<r$.
\item $\sigma_z=0$ on all of $I^m$ for $z \in Op(\partial I^q)$.
\end{itemize}
Then there exists a family of isotopies $F^z_t:I^m \to I^m$ and a family of holonomic sections $\hat{\sigma}_z :I^m \to J^r(\bR^m, \bR^n)$ such that the following properties hold.
\begin{itemize}
\item $\hat{\sigma}_z$ is $C^0$-close to $\sigma_z$ on $Op\big( F^z_1(I^k) \big)$.
\item $\hat{\sigma}^{(l)}_z$ is $C^0$-small on all of $I^m$.
\item $F^z_t$ is $C^0$-small.
\item $F^z_t=id_{I^m}$ and $\hat{\sigma}_z=0$ on $Op(\partial I^m)$.
\item $F^z_t=id_{I^m}$ and $\hat{\sigma}_z=0$ on all of $I^m$ for $z \in Op(\partial I^q)$.
\end{itemize}
\end{theorem}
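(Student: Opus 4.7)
The plan is to deduce Theorem~\ref{parametric localized holonomic approximation lemma for $l$-holonomic sections} from the parametric counterpart of Theorem~\ref{localized holonomic approximation lemma for perp-holonomic sections} (the $\perp$-holonomic version in the same localized parametric setting, which we assume is established separately in this section). The argument proceeds by induction on $r-l$, with the central tool being the decomposition of top-degree data into primitive sections.

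Base case $r-l = 1$: here $\sigma_z^{(r-1)} = 0$, so only the top-degree component $\rho_z \colon I^m \to \mathrm{Sym}^r\bigl((\bR^m)^*\bigr) \otimes \bR^n$ is non-trivial. Choose constant non-zero covectors $\nu_1, \ldots, \nu_N \in (\bR^m)^*$ whose $r$-th powers $\nu_i^{\otimes r}$ span $\mathrm{Sym}^r\bigl((\bR^m)^*\bigr)$, and write $\rho_z = \sum_{i=1}^N c_{i,z}\, \nu_i^{\otimes r}$ with smooth coefficients $c_{i,z} \colon I^m \to \bR^n$ vanishing on $Op(\partial I^m)$ and for $z \in Op(\partial I^q)$. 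Each summand defines a primitive section $\sigma^{(i)}_z$ of $J^r(\bR^m, \bR^n)$ with respect to the constant hyperplane field $\tau_i = \ker \nu_i$ in the sense of Definition~\ref{primitive definition}. Apply the parametric localized $\perp$-holonomic lemma iteratively: at step $i$, feed in $\sigma^{(i)}_z$ with the fibered polyhedron obtained by applying the composite of all preceding isotopies to $I^k$; this yields $F^{(i),z}_t$ and $h^{(i)}_z$ with $j^{r-1}(h^{(i)}_z)$ globally $C^0$-small. Take $F^z_t$ to be a suitable reparametrization of $F^{(N),z}_t \circ \cdots \circ F^{(1),z}_t$ and $\hat\sigma_z = j^r\bigl(h^{(1)}_z + \cdots + h^{(N)}_z\bigr)$. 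With the thresholds at each step chosen small enough, $F^z_1(I^k)$ remains inside the approximation region of every preceding step and the summed $(r-1)$-jets satisfy the required global smallness.

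Inductive step $r - l \geq 2$: assume the statement for $(r-1, l)$ and apply it to $\sigma_z^{(r-1)}$, a section of $J^{r-1}(\bR^m, \bR^n)$ inheriting all hypotheses. This produces $F^{(1),z}_t$ and $h^{(1)}_z$ with $j^{r-1}(h^{(1)}_z)$ approximating $\sigma_z^{(r-1)}$ near $F^{(1),z}_1(I^k)$ and $j^l(h^{(1)}_z)$ globally small. Define the residual section $\tilde\sigma_z \colon I^m \to J^r(\bR^m, \bR^n)$ whose $(r-1)$-jet vanishes and whose top-degree component equals $\sigma_{r,z} - \partial^r h^{(1)}_z$; all boundary and parameter vanishing conditions are inherited, so this is a legitimate input for the base case. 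Applying the base case to $\tilde\sigma_z$ with the fibered polyhedron $\{(z,\, F^{(1),z}_1(I^k))\}_{z \in I^q}$ yields $F^{(2),z}_t$ and $h^{(2)}_z$. Setting $F^z_t = F^{(2),z}_t \circ F^{(1),z}_t$ (suitably reparametrized) and $h_z = h^{(1)}_z + h^{(2)}_z$ with $\hat\sigma_z = j^r(h_z)$, near $F^z_1(I^k)$ the $(r-1)$-jet of $h_z$ approximates $\sigma_z^{(r-1)}$ and the top derivative telescopes to approximate $\sigma_{r,z}$, while globally $j^l(h_z)$ remains small.

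I expect the main difficulty to lie in the quantitative bookkeeping during the iteration in the base case: at each of the $N$ applications of the $\perp$-holonomic lemma the approximation region is transported by the composite isotopy and the global $(r-1)$-jet contributions accumulate additively, so the $C^0$-closeness thresholds at each step must be tuned small enough (roughly dividing the target tolerances by $N$) to guarantee both that $F^z_1(I^k)$ stays within the validity region of every preceding step and that the summed residuals satisfy the prescribed global bound. A routine but necessary check is that the relative vanishing conditions on $Op(\partial I^m)$ and for parameters near $\partial I^q$ propagate faithfully through the primitive decomposition and the successive compositions.
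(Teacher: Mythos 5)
Your proposal follows essentially the same route as the paper: an induction on $r-l$ reducing to the case $l=r-1$ (the paper's Lemma~\ref{reduction to $l=r-1$}, stated to hold in families), then a decomposition of the top-degree component into sections primitive with respect to constant hyperplane fields (the paper's Lemma~\ref{decomposition}, proved via an explicit polynomial identity expressing monomials as combinations of $r$-th powers of linear forms), followed by an iterated application of the parametric $\perp$-holonomic localized lemma with composed isotopies and summed holonomic sections. The only cosmetic difference is that at each iteration the paper pulls the next primitive piece back by the composite of the preceding isotopies so as to always apply the $\perp$-lemma to the standard $I^k\subset I^m$, rather than feeding in the wiggled polyhedron directly, but this is equivalent to what you describe.
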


\begin{theorem}[parametric localized holonomic approximation lemma for $\perp$-holonomic sections]\label{parametric localized holonomic approximation lemma for perp-holonomic sections}
Fix $k<m$. Let $\sigma_z:I^m \to J^r(\bR^m, \bR^n)$ be a family of sections parametrized by the unit cube $I^q$ such that the following properties hold.
\begin{itemize}
\item $\sigma_z=0$ on $Op(\partial I^m)$.
\item $\sigma^{\perp}_z=0$ on all of $I^m$ with respect to some family of hyperplane fields $\tau_z \subset TI^m$.
\item $\sigma_z=0$ on all of $I^m$ for $z \in Op(\partial I^q)$.
\end{itemize}
Then there exists a family of isotopies $F^z_t:I^m \to I^m$ and a family of holonomic sections $\hat{\sigma}_z :I^m \to J^r(\bR^m, \bR^n)$ such that the following properties hold.
\begin{itemize}
\item $\hat{\sigma}_z$ is $C^0$-close to $\sigma_z$ on $Op\big( F^z_1(I^k) \big)$.
\item $\hat{\sigma}^{\perp}_z$ is $C^0$-small on all of $I^m$.
\item $F^z_t$ is $C^0$-small.
\item $F^z_t=id_{I^m}$ and $\hat{\sigma}_z=0$ on $Op(\partial I^m)$.
\item $F^z_t=id_{I^m}$ and $\hat{\sigma}_z=0$ on all of $I^m$ for $z \in Op(\partial I^q)$.
\end{itemize}
\end{theorem}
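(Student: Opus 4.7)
The argument will mirror the non-parametric proof of Theorem~\ref{localized holonomic approximation lemma for perp-holonomic sections} with the parameter space $I^q$ added throughout, so the plan is to outline the two parametric ingredients required and how they are glued together to respect the boundary condition at $\partial I^q$. Throughout, I will exploit that $\sigma_z \equiv 0$ for $z \in Op(\partial I^q)$ by arranging that every object I construct depends smoothly on $z$ and is supported away from $\partial I^q$.

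First I would carry out a parametric transversality adjustment: I replace the family $\tau_z$ by a $C^0$-close family $\widetilde{\tau}_z$ such that $\widetilde{\tau}_z$ is transverse to $I^k$ along all of $I^k$ for every $z \in I^q$. This is the family analogue of the adjustment developed in Section~\ref{transversality adjustment}, which is a genuinely local geometric construction: the tangency locus $\{x \in I^k : \tau_z(x) \supset T_x I^k\}$ varies stratified-smoothly with $z$, and near it one tilts $\tau_z$ in a direction transverse to $I^k$ by an amount that vanishes away from the tangency locus and is supported near $\partial I^q$. Because a section which is $\perp$-holonomic with respect to $\tau_z$ differs from one which is $\perp$-holonomic with respect to $\widetilde{\tau}_z$ only by the pure order-$r$ component along the new normal direction, I can then modify $\sigma_z$ by a $C^0$-small adjustment of its $\perp$-component to assume from the start that $\tau_z \pitchfork I^k$ for every $z$.

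Second, I would apply the parametric wiggling construction from Section~\ref{holonomic approximation with controlled cutoff}. For each $z$, primitivity of $\sigma_z$ with respect to $\tau_z$ means there exists a family of maps $h_z : Op(I^k) \to \bR^n$ such that $\sigma_z$ differs from $j^r(h_z)$ only in the pure order-$r$ direction transverse to $\tau_z$. I then build an isotopy $F^z_t$ of $I^m$ depending smoothly on $z$, whose wiggles are parallel to $\tau_z$, such that $j^r(h_z) \circ F^z_1$ is $C^0$-close to $\sigma_z$ on $Op(F^z_1(I^k))$. Combining with a cutoff supported in a slightly larger neighborhood of $I^k$ produces the global holonomic extension $\hat{\sigma}_z$; the key estimate is that because the wiggles are $\tau_z$-parallel and $\sigma_z^{\perp} = 0$, the $\perp$-component $\hat{\sigma}_z^{\perp}$ remains globally $C^0$-small even though $\hat{\sigma}_z$ itself is large away from $F^z_1(I^k)$. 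The constants in every estimate are uniform in $z$ because $I^q$ is compact.

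To enforce the parametric boundary conditions at $\partial I^q$, I would multiply the whole perturbation constructed above by a cutoff function $\chi : I^q \to [0,1]$ that vanishes on $Op(\partial I^q)$ and equals $1$ outside a slightly larger neighborhood. Since $\sigma_z \equiv 0$ there, the cutoff interpolates between the trivial approximation ($F^z_t = \mathrm{id}$, $\hat{\sigma}_z = 0$) and the nontrivial one built above; uniformity of the $C^0$-bounds in $z$ ensures that all four smallness/closeness conditions in the conclusion survive. The main obstacle I expect is the first step: producing a parametric transversality adjustment that depends smoothly on $z$, respects the boundary condition at $\partial I^q$, and leaves the $\perp$-holonomicity of the family $\sigma_z$ intact up to a $C^0$-small correction. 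This requires a careful stratified-families version of the non-parametric construction in Section~\ref{transversality adjustment}, and is where essentially all of the new technical work is concentrated; the wiggling step and the final cutoff in $z$ are then straightforward parametrizations of the non-parametric arguments.
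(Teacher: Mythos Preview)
Your proposal misidentifies what the ``transversality adjustment'' of Section~\ref{transversality adjustment} actually is, and the scheme you substitute for it has a quantitative gap.

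You propose to replace $\tau_z$ by a $C^0$-close family $\widetilde{\tau}_z$ that is everywhere transverse to $I^k$, and then wiggle parallel to $\widetilde{\tau}_z$. The problem is that these two requirements fight each other. At a point where $\tau_z$ is tangent to $I^k$, any $C^0$-small tilt yields a $\widetilde{\tau}_z$ whose angle with $I^k$ is itself small. But the wiggling construction of Section~\ref{holonomic approximation with controlled cutoff} is carried out only for the \emph{fixed} constant hyperplane $V = 0 \times \bR^{m-1}$; to reduce a variable transverse $\widetilde{\tau}_z$ to that model one must straighten by a change of coordinates, and the derivative of that change of coordinates blows up as the transversality angle goes to zero. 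So either your tilt is small and the wiggling estimates degenerate, or your tilt is bounded below and $\widetilde{\tau}_z$ is no longer $C^0$-close to $\tau_z$, in which case controlling $\hat{\sigma}^{\perp}$ with respect to the \emph{original} $\tau_z$ fails. There is no choice of tilt that makes both work.

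The paper does not tilt $\tau_z$ at all. Instead it fixes a threshold $\lambda>0$ with $C\|\sigma_z\|_{C^r}\lambda<\varepsilon$, subdivides $I^k$ (and $I^q$) into cubes small enough that $\tau_z$ is nearly constant on each, and treats the two regimes separately. On faces where $\measuredangle(\tau_z,F)\leq\lambda$, it invokes the parametric Lemma~\ref{parametric calculation for almost tangent}: this is an explicit holonomic section $\hat{\sigma}_z=j^r(h_z)$ built by hand, with no wiggling whatsoever, whose error is bounded by $C\|\sigma_z\|_{C^r}(\measuredangle(\tau_z,F)+\delta)<\varepsilon$. On the remaining faces the angle is bounded below by $\lambda/2$, so $\tau_z$ can be approximated by a constant transverse hyperplane and straightened to $V$ with uniformly bounded Jacobian; only then is the parametric wiggling model Theorem~\ref{parametric final reformulation} applied. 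The inductive gluing over the skeleton, together with the a~priori bound on $\|dF^z_t\|$ from Remark~\ref{technical}, is what makes the piecewise-constant approximation survive each step. Your sketch omits both the subdivision and the direct construction on the almost-tangent locus, and these are precisely where the work lies; the final cutoff in $z$ that you describe is, by contrast, already built into the explicit formulas for $F^z_t$ and $\phi_z$ in the parametric case.
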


The rest of the paper is devoted to the proofs of the local relative Theorems \ref{localized holonomic approximation lemma for $l$-holonomic sections}, \ref{localized holonomic approximation lemma for perp-holonomic sections}, \ref{parametric localized holonomic approximation lemma for $l$-holonomic sections} and \ref{parametric localized holonomic approximation lemma for perp-holonomic sections}.

\section{Geometry of jet spaces}\label{Geometry of jet spaces}

\subsection{Jets as Taylor polynomials}\label{Taylor polynomials}
The reduction carried out in Section \ref{Localization of the problem} leads us to study the local space $J^r(\bR^m, \bR^n)$. We begin by giving an explicit description of this space in terms of Taylor polynomials. This description is useful both for intuition and for the explicit computations to be carried out later on.

Given a point $x \in \bR^m$ and given $n$ real polynomials $p_1(X), \ldots , p_n(X) \in \bR[X]$ in $m$ variables $X=(X_1, \ldots , X_m)$ of degree $\leq r$, set $s(x) \in J^r(\bR^m, \bR^n)$ to be the $r$-jet at $x$ of the germ 
\[ y \mapsto \big(p_1(y-x), \ldots , p_n(y-x) \big) \in \bR^n , \quad y \in Op(x) \subset \bR^m.\]

 This assignment yields a trivialization $J^r(\bR^m, \bR^n) \simeq \bR^m  \times (\cP_r)^n$, $s(x) \leftrightarrow \big(x,p_1(X), \ldots , p_n(X)\big)$, where $\cP_r =\{ p(X) \in \bR[X] : \, \, \text{deg}\big( p(X) \big) \leq r \}$. Indeed, if $h:Op(x) \subset \bR^m \to \bR^n$ is the germ of a smooth function at the point $x \in \bR^m$ and $\big(p_1(X), \ldots, p_n(X) \big)$ is its linear Taylor approximation of order $r$ centered at $x$, then the above construction yields $s(x)=j^r(h)(x)$. In this way we think of an arbitrary section $s:I^m \to J^r(\bR^m, \bR^n)$ as a familiy $s(x)$ of Taylor polynomials of degree $\leq r$ parametrized by the point $x \in I^m$.  
 
Observe that for $l<r$ we obtain an induced trivialization $(p^r_l)^{-1}(0)  \simeq \bR^m \times (\cP_{l,r})^n$, where we recall the affine bundle $p^r_l:J^r(\bR^m,\bR^n) \to J^l(\bR^m,\bR^n)$, $\sigma \mapsto \sigma^{(l)},$ and we denote by $\cP_{l,r}\subset \cP_r $ the space of real polynomials in $m$ variables $X=(X_1, \ldots , X_m)$ which are sums of monomials of degree strictly greater than $l$ and at most $r$. If we further denote by $\cH_j=\cP_{j-1,j}$ the space of homogeneous polynomials in $m$ variables $X=(X_1, \ldots , X_m)$ of degree exactly $j$, then from the degree splitting $\cP_r=\cH_0 \times \cH_1 \times \cdots \times \cH_r$ we get an induced decomposition $J^r(\bR^m, \bR^n) \simeq \bR^m \times (\cH_0)^n \times (\cH_1)^n \times \cdots \times (\cH_r)^n$ into homogeneous components. By the homogeneous component of order $j$ of an $r$-jet $s(x) \in J^r(\bR^m, \bR^n)$ at $x \in \bR^m$ we will mean the $(\cH_j)^n$ entry corresponding to $s(x)$ under the above decomposition. The projection $p^r_l$ simply forgets the homogeneous components of degree $>l$ and so we have a similar decomposition $(p^r_l)^{-1}(0) \simeq \bR^m \times (\cH_{l+1})^n \times \cdots (\cH_r)^n$. 

We next consider the trivialization $J^r(\bR^m, \bR^n) \simeq \bR^m \times (\cP_r)^n$ in the context of primitive sections, as defined in Section \ref{Holonomic trivialization}. Let $\tau \subset T\bR^m$ be a hyperplane field. We can specify a co-orientation of $\tau$ by choosing a family of unit vectors $u_x \in \bR^m$ which are orthogonal to $\tau_x$ with respect to the usual Euclidean inner product $\langle \cdot \, , \cdot \rangle$. Set $l_x: \bR^m \to \bR$ to be the linear function $l_x(\cdot)= \langle \cdot \, , u_x \rangle$. Given a point $x \in \bR^m$ and a vector $v=(v_1, \ldots , v_n) \in  \bR^n$, define $s(x) \in J^r(\bR^m, \bR^n)$ to be the $r$-jet at the point $x \in \bR^m$ of the germ 
\[ y \mapsto \big( l_x(y-x)\big)^r \cdot (v_1, \ldots, v_n) \in \bR^n  , \quad  y \in Op(x) \subset \bR^m.\]

In other words, we repeat the above construction $s(x) \leftrightarrow \big(x, p_1(X), \ldots , p_n(X) \big)$ with the polynomials $p_j(X)=v_j \cdot \big(l_x(X)\big)^r$, which are all multiples of the $r$-th power of a linear function with kernel $\tau_x$. Observe that the resulting $r$-jet satisfies $s(x)^{\perp}=0$ with respect to the hyperplane field $\tau_x$. In fact, the choice of co-orientation $u_x$ determines a trivialization of the space of sections which are primitive with respect to $\tau$, namely $(p^r_{\perp})^{-1}(0) \simeq \bR^m \times \bR^n$, $s(x) \leftrightarrow (x,v)$, where we recall the affine fibration $p^r_\perp : J^r(\bR^m, \bR^n) \to J^{\perp}(\bR^m, \bR^n)$, $\sigma \mapsto \sigma^{\perp}$. 

\subsection{Linear structure}
For a general fibre bundle $p:X \to M$ we have affine bundle structures $p^r_l:X^{(r)} \to X^{(l)}$ for each $l<r$, but there is no invariantly defined linear structure on the bundle $p^r_l$. Equivalently, in general we cannot invariantly define inclusions $X^{(l)} \subset X^{(r)}$. The reason is that the chain rule for derivatives of order $r$ involves all derivatives of order $ \leq r$ and therefore a change of coordinates will mix up the jet components of different orders. 

Nevertheless, in the case $X^{(r)}=J^r(\bR^m, \bR^n)$ we have a canonical linear structure arising from the linear structure on $\bR^n$. Explicitly, if $s(x) , \sigma(x) \in J^r(\bR^m, \bR^n)$ are $r$-jets at the point $x \in \bR^m$ corresponding to germs $h,g:Op(x) \subset \bR^m \to \bR^n$ and if $a,b \in \bR$ are any two real numbers, then we can define $a \cdot s(x)+b \cdot \sigma(x) \in J^r(\bR^m, \bR^n)$ to be the $r$-jet at the point $x \in \bR^m$ corresponding to the germ $ah+bg:Op(x) \subset \bR^m \to \bR^n$. We can therefore equip the $r$-jet bundle $p^r: J^r(\bR^m, \bR^n) \to \bR^m$ with the structure of a vector bundle. Similarly, for all $l<r$ we can endow each of the projections $p^r_l:J^r(\bR^m, \bR^n) \to J^l(\bR^m, \bR^n)$ with vector bundle structures. However, we will reserve the addition sign to denote the linear structure on the bundle $p^r$. In terms of the trivialization $J^r(\bR^m, \bR^n) \simeq \bR^m \times (\cP_r)^n$, this linear structure corresponds to addition of polynomials in $\cP_r$. 

\subsection{Reduction to the $l=r-1$ case}\label{Reduction to the case $l=r-1$}
In order to deduce the localized holonomic approximation lemma for $l$-holonomic sections  \ref{localized holonomic approximation lemma for $l$-holonomic sections} as a corollary of the localized holonomic approximation lemma for $\perp$-holonomic sections \ref{localized holonomic approximation lemma for perp-holonomic sections}, it is useful to first reduce to the case $l=r-1$. This reduction is accomplished by the following inductive argument.

\begin{lemma}\label{reduction to $l=r-1$} Suppose that there exists $j\leq r$ such that Theorem \ref{localized holonomic approximation lemma for $l$-holonomic sections} holds for all $r$ and $l$ such that $r-l<j$. Then it also holds for all $r$ and $l$ such that $r-l<j+1$.
\end{lemma}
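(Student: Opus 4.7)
The plan is to induct on $j = r - l$, treating Lemma \ref{reduction to $l=r-1$} as the inductive step. The base case $j = 1$ (where $l = r - 1$) is not handled by this lemma itself; it will instead be obtained in subsequent sections from Theorem \ref{localized holonomic approximation lemma for perp-holonomic sections}. What follows is the content for $j \geq 2$: assuming Theorem \ref{localized holonomic approximation lemma for $l$-holonomic sections} at all parameters $(r', l')$ with $r' - l' < j$, we deduce it at $r - l = j$. The strategy is to split $\sigma$ into a lower-degree part, handled via the inductive hypothesis at $(r-1, l)$ (legal since $(r-1) - l = j-1$), and a top-degree part, handled via the inductive hypothesis at $(r, r-1)$ (legal since $r - (r-1) = 1 < j$), then glue the two approximations using the canonical linear structure on $J^r(\bR^m, \bR^n)$.

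First, the truncation $\sigma^{(r-1)} : I^m \to J^{r-1}(\bR^m, \bR^n)$ is an $l$-holonomic section with $(\sigma^{(r-1)})^{(l)} = 0$ that vanishes on $Op(\partial I^m)$. The inductive hypothesis at $(r-1, l)$ yields a $C^0$-small isotopy $F_t : I^m \to I^m$ and a smooth map $g : I^m \to \bR^n$ with $g = 0$ on $Op(\partial I^m)$, such that $j^{r-1}(g)$ is $C^0$-close to $\sigma^{(r-1)}$ on $Op(F_1(I^k))$ and $j^l(g)$ is $C^0$-small on all of $I^m$. Using the Taylor polynomial trivialization $J^r(\bR^m, \bR^n) \simeq \bR^m \times (\cH_0)^n \times \cdots \times (\cH_r)^n$ from Section \ref{Taylor polynomials}, let $\pi_r$ denote the linear projection onto the degree-$r$ homogeneous component and set
\[
\sigma'' \; := \; \pi_r\bigl( \sigma - j^r(g) \bigr) \, : \, I^m \longrightarrow J^r(\bR^m, \bR^n).
\]
By construction $(\sigma'')^{(r-1)} = 0$ identically, so $\sigma''$ is $(r-1)$-holonomic, and $\sigma'' = 0$ on $Op(\partial I^m)$. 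A direct computation in the trivialization shows that $j^r(g) + \sigma''$ agrees with $\sigma$ exactly in its degree-$r$ component and differs from $\sigma$ by the small quantity $\sigma^{(r-1)} - j^{r-1}(g)$ in its lower components on $Op(F_1(I^k))$.

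Now apply the inductive hypothesis at $(r, r-1)$ to $\sigma''$, obtaining a $C^0$-small isotopy $F'_t : I^m \to I^m$ and a holonomic section $j^r(\phi)$ with $\phi = 0$ on $Op(\partial I^m)$, such that $j^r(\phi)$ is $C^0$-close to $\sigma''$ on $Op(F'_1(I^k))$ and $j^{r-1}(\phi)$ is $C^0$-small on all of $I^m$. Setting $\hat\sigma := j^r(g + \phi)$ (a holonomic section, by additivity of $j^r$) and combining $F_t, F'_t$ into a $C^0$-small isotopy $F^{\mathrm{tot}}_t$ whose final wiggling $F^{\mathrm{tot}}_1(I^k)$ sits in the common approximation region, we find: on $Op(F^{\mathrm{tot}}_1(I^k))$ the approximation $\hat\sigma = j^r(g) + j^r(\phi) \approx j^r(g) + \sigma'' \approx \sigma$ holds; and globally on $I^m$ the $l$-jet $\hat\sigma^{(l)} = j^l(g) + j^l(\phi)$ is $C^0$-small, since $j^l(g)$ is small by the first step and $j^l(\phi)$ is controlled by the smallness of $j^{r-1}(\phi)$.

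The main technical obstacle is the coordination of the two isotopies: we need $F^{\mathrm{tot}}_1(I^k)$ to lie inside both $Op(F_1(I^k))$ and the approximation region of the second instance of the inductive hypothesis. The cleanest way to arrange this is to conjugate the second application by a diffeomorphism of $I^m$ sending $I^k$ onto $F_1(I^k)$ (e.g., $F_1$ itself), so that $F'_t$ wiggles $F_1(I^k)$ rather than $I^k$; the diffeomorphism-invariance of the inductive hypothesis then provides $F^{\mathrm{tot}}_1(I^k) \subset Op(F_1(I^k))$, and both approximations are valid there.
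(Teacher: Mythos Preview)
Your proof is correct and follows essentially the same route as the paper's. The paper incorporates the conjugation you describe in your final paragraph directly into the construction: it applies the second instance of the inductive hypothesis to the pullback $(H_1)^*\nu$ (your $(F_1)^*\sigma''$), obtains an isotopy $\widetilde H_t$ and holonomic approximation $\hat\nu$, and then sets $F_t = H_t \circ \widetilde H_t$ and $\hat\sigma = j^r(h) + (H_1)_*\hat\nu$. This is exactly your ``conjugate by $F_1$'' fix, just built in from the start rather than added as a repair. One small point of phrasing: what you call ``diffeomorphism-invariance of the inductive hypothesis'' is more precisely the pullback/pushforward mechanism---Theorem \ref{localized holonomic approximation lemma for $l$-holonomic sections} is stated only for the standard pair $I^k \subset I^m$, so one pulls back to that model, applies the theorem, and pushes forward, exactly as the paper does.
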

\begin{remark} Before we dive into the proof, we recall the notion of pullbacks and pushforwards in jet spaces. Suppose that $F:\bR^m \to \bR^m$ is a diffeomorphism, $F(x)=y$ and $h:Op(y) \to \bR^n$ is a germ of a smooth function at the point $y \in \bR^m$. Then $h \circ F : Op(x) \to \bR^n$ is a germ of a smooth function at $x \in \bR^m$. This assignment defines a pullback map $F^* : J^r(\bR^m, \bR^n) \to J^r(\bR^m, \bR^n)$ which covers $F^{-1}$. Similarly, we define the pushforward $F_*=(F^{-1})^* : J^r(\bR^m, \bR^n) \to J^r(\bR^m, \bR^n)$ which covers $F$.
\end{remark}
\begin{proof}[Proof of Lemma \ref{reduction to $l=r-1$}]
Let $\sigma:I^m \to J^r(\bR^m, \bR^n)$ be a section such that $\sigma=0$ on $Op(\partial I^m)$ and such that $\sigma^{(r-j)}=0$ on all of $I^m$. Let $\mu=\sigma^{(r-1)}:I^m \to J^{r-1}(\bR^m, \bR^n)$ be its $(r-1)$-jet component. Then we also have $\mu=0$ on $Op(\partial I^m)$ and $\mu^{(r-j)}=0$ on all of $I^m$. Observe that $r-j=(r-1)-(j-1)$ and therefore by assumption there exists a $C^0$-small isotopy $H_t:I^m \to I^m$ such that $H_t=id_{I^m}$ on $Op(\partial I^m)$ and a holonomic section $\hat{\mu}: I^m \to J^{r-1}(\bR^m, \bR^n)$ such that $\hat{\mu}$ is $C^0$-close to $\mu$ on $Op\big(H_1(I^k)\big)$, such that $\hat{\mu}^{(r-j)}$ is $C^0$-small and such that $\mu=0$ on $Op(\partial I^m)$. 

Since $\hat{\mu}$ is holonomic, we have $\hat{\mu}=j^{r-1}(h)$ for some function $h:I^m \to \bR^m$. There exists a unique section $\nu:I^m \to J^r(\bR^m, \bR^n)$ such that $\nu^{(r-1)}=0$ and such the homogeneous order $r$ component of $\nu$ is equal to the homogeneous order $r$ component of the section $\sigma-j^r(h)$. Observe that the pullback $(H_1)^*\nu$ by the diffeomorphism $H_1$ also has zero $(r-1)$-jet component. We can therefore apply once again our inductive hypothesis to ensure the existence of a $C^0$-small isotopy $\widetilde{H}_t: I^m \to I^m$ such that $\widetilde{H}_t=id_{I^m}$ on $Op(\partial I^m)$ and a holonomic section $\hat{\nu}:I^m \to J^r(\bR^m, \bR^n)$ such that $\hat{\nu}$ is $C^0$-close to $(H_1)^*\nu$ on $Op\big(\widetilde{H}_1(I^k) \big)$, such that $\hat{\nu}^{(r-1)}$ is $C^0$-small and such that $\hat{\nu}=0$ on $Op(\partial I^m)$.  

Set $F_t=H_t \circ \widetilde{H}_t$ and $\hat{\sigma}=j^{r}(h)+(H_1)_*(\hat{\nu})$. Then we can rephrase our above conclusions by stating that $F_t$ is $C^0$-small isotopy such that $F_t=id_{I^m}$ on $Op(\partial I^m)$ and that $\hat{\sigma}$ is a holonomic section of $J^r(\bR^m, \bR^n)$ such that $\hat{\sigma}$ is $C^0$-close to $\sigma $ on $Op\big( F_1(I^m) \big)$, such that $\hat{\sigma}^{(r-j)}$ is $C^0$-small and such that $\hat{\sigma}=0$ on $Op(\partial I^m)$. This is exactly what we wanted. \end{proof}

\begin{figure}[h]
\includegraphics[scale=0.62]{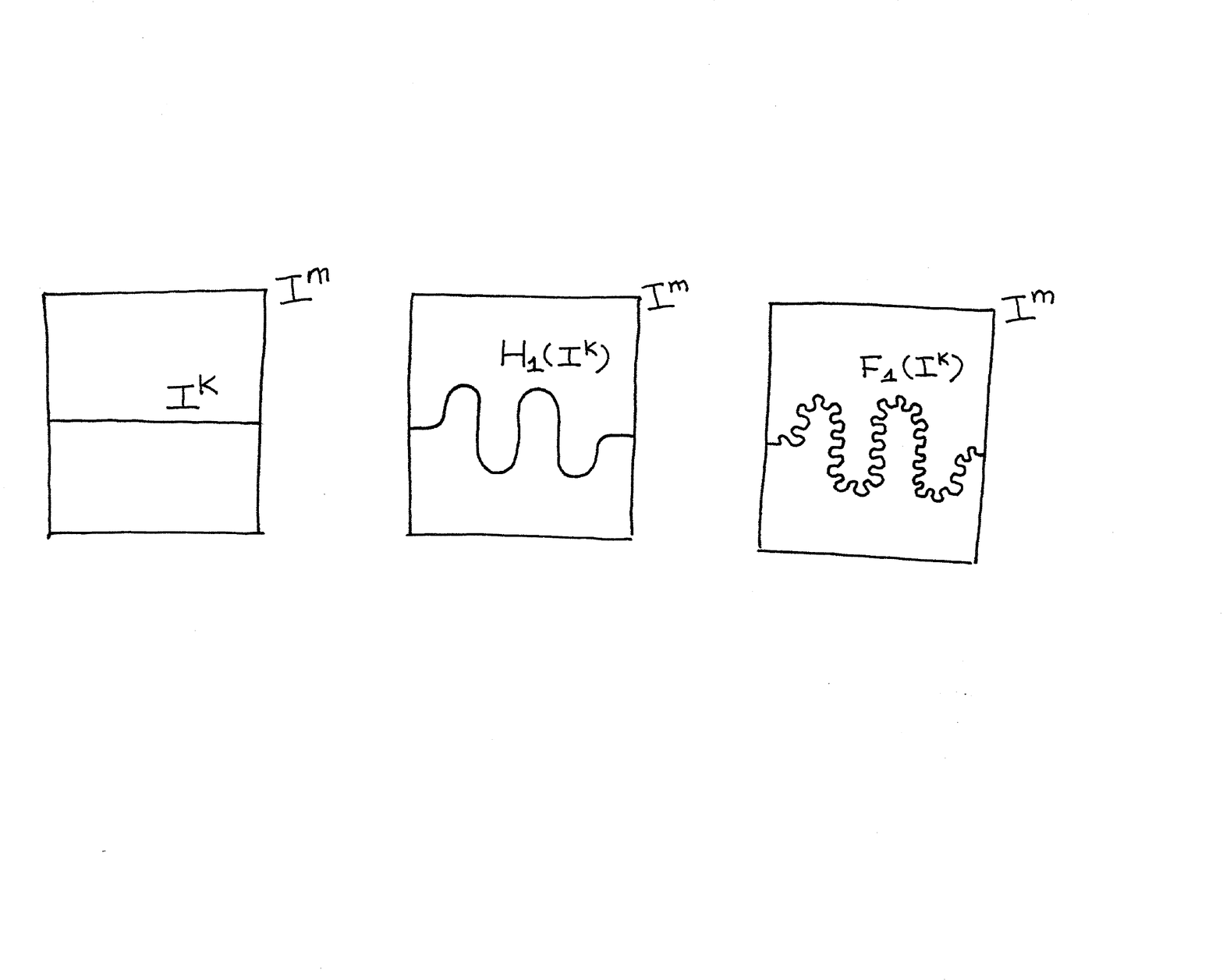}
\caption{Wiggling the wiggles with $F_t=H_t \circ \widetilde{H}_t$.}
\label{Wiggling the wiggles}
\end{figure}

Lemma \ref{reduction to $l=r-1$} also holds in families, with the same proof. The only difference is that one needs to add a parameter everywhere in the notation. Therefore, it also suffices to prove the parametric local relative Theorem \ref{parametric localized holonomic approximation lemma for $l$-holonomic sections} in the case $l=r-1$.

\subsection{Decomposition into primitive sections}\label{Decomposition into primitive sections}
To reduce the $l=r-1$ case of the localized holonomic approximation lemma for $l$-holonomic sections to the localized holonomic approximation lemma for $\perp$-holonomic sections we need to consider decompositions of $r$-jet sections with zero $(r-1)$-jet component into sums of primitive sections. The following discussion closely resembles the theory of principal decompositions in jet spaces invented by Gromov in \cite{G86} in the context of convex integration and further fleshed out by Spring in \cite{S98}.  

Given a fixed holonomic section $\hat{s}:M \to X^{(r)}$ of the $r$-jet bundle of a fibre bundle $p:X \to M$, recall that primitive sections are the local analogues of $\perp$-holonomic sections whose $\perp$-jet equals $\hat{s}^{\perp}$. We repeat the precise definition for convenience.
\begin{definition} A section $\sigma: I^m \to J^r(\bR^m, \bR^n)$ is called primitive with respect to a hyperplane field $\tau \subset TI^m$ if $\sigma^{\perp}=0$.
\end{definition}
We are particularly interested in sections which are primitive with respect to the hyperplane fields $\tau_\beta= \ker( dx_{\beta_1} + \cdots + dx_{\beta_k})$, where $\beta$ ranges over all multi-indices such that $1 \leq \beta_j \leq m$ and such that $|\beta|=k \leq r$, up to permutation. There are of course redundancies among the $\tau_\beta$, but this is not important. We remark that the hyperplane fields $\tau_\beta$ are constant and hence integrable. Given a section $\sigma : I^m \to J^r(\bR^m, \bR^n)$ such that $\sigma^{(r-1)}=0$, our goal is to obtain a decomposition $\sigma= \sum_\beta \sigma_\beta$ where each section $\sigma_\beta$ is primitive with respect to $\tau_\beta$, see Figure \ref{decomposition}. Moreover, we want this decomposition to be well-behaved in a sense that is made precise below.  

For this purpose we invoke the following simple polynomial identity, which the author found in \cite{B14} but which may well be classical. Consider the formula
\[ X_1 X_2 \cdots X_r = \frac{(-1)^r}{r!}\sum_U (-1)^{|U|}\Big( \sum_{u \in U} X_u\Big)^r, \]
where the sum ranges over all subsets $U \subset \{ 1, 2, \ldots , r\}$. Recall from Section \ref{Taylor polynomials} that we can think of an $r$-jet $\sigma(x) \in J^r(\bR^m, \bR^n)$ at the point $x \in I^m$ such that $\sigma^{(r-1)}(x)=0$ as a homogeneous Taylor polynomial $\big(p_1(X), \ldots , p_n(X) \big)$ of degree $r$ centered at $x$. Each such polynomial $p_j(X)$ can be written uniquely as a sum of monomials: $p_j(X) = \sum_\alpha a_\alpha X_{\alpha_1} \cdots X_{\alpha_r}$, where $\alpha$ ranges through all multi-indices $\alpha=(\alpha_1, \ldots , \alpha_r)$ such that $1 \leq \alpha_j \leq m$, up to permutation. Hence we can write
\[ p_j(X) = \sum_\alpha a_\alpha X_{\alpha_1}\cdots X_{\alpha_r} = \sum_\alpha \frac{(-1)^ra_\alpha}{r!}  \sum_U (-1)^{|U|} \Big( \sum_{u \in U} X_{\alpha_u} \Big)^r\] \[  = \sum_\beta \Big(  \sum_{ (\alpha_u)=\beta } \frac{(-1)^{r+ |\beta|}a_\alpha}{r!} \Big) \big( X_{\beta_1} + \cdots + X_{\beta_k} \big)^r,\] 
where the inner sum ranges over all pairs $(\alpha, U)$ such that $(\alpha_u)_{u \in U}=\beta$. Observe that the homogeneous degree $r$ monomial $(X_{\beta_1} + \cdots + X_{\beta_k} )^r$ corresponds to an $r$-jet which is primitive with respect to $\tau_\beta=\ker(dx_{\beta_1} + \cdots + dx_{\beta_k})$. We have therefore proved the following.
\begin{lemma}\label{decomposition} Given a section $\sigma:I^m \to J^r(\bR^m, \bR^n)$ such that $\sigma^{(r-1)}=0$, we can write $\sigma= \sum_\beta \sigma_\beta$ for sections $\sigma_\beta : I^m \to J^r(\bR^m, \bR^n)$ such that the following properties hold.
\begin{itemize}
\item Each section $\sigma_\beta$ is primitive with respect to $\tau_\beta$.
\item Each section $\sigma_\beta$ depends smoothly on $\sigma$.
\item If $\sigma=0$ on $Op(A)$ for some closed subset $A \subset I^m$, then $\sigma_\beta=0$ on $Op(A)$ for all $\beta$.
\end{itemize}
\end{lemma}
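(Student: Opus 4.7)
The plan is to derive the decomposition directly from the polynomial identity already displayed in the paragraph preceding the statement, which does essentially all of the real work. Using the trivialization $J^r(\bR^m, \bR^n) \simeq \bR^m \times (\cP_r)^n$ from Section \ref{Taylor polynomials}, the hypothesis $\sigma^{(r-1)} = 0$ says that for each $x \in I^m$ the jet $\sigma(x)$ is represented by an $n$-tuple of purely order-$r$ homogeneous polynomials $p_j(X) = \sum_\alpha a_{\alpha,j}(x)\, X_{\alpha_1} \cdots X_{\alpha_r}$ centered at $x$, where the coefficients $a_{\alpha,j}(x)$ depend smoothly on $x$ and on $\sigma$.

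First I would apply the identity $X_1 \cdots X_r = \frac{(-1)^r}{r!} \sum_U (-1)^{|U|} \bigl( \sum_{u \in U} X_u \bigr)^r$ to each monomial $X_{\alpha_1} \cdots X_{\alpha_r}$ appearing in $p_j(X)$, and then reorganize the resulting triple sum (over $\alpha$, over $U \subset \{1, \ldots, r\}$, over $j$) by collecting all contributions that produce the same pure power $(X_{\beta_1} + \cdots + X_{\beta_k})^r$ for a given multi-index $\beta$ with $|\beta|=k \le r$. This yields an expression $p_j(X) = \sum_\beta c_{\beta,j}(x)\,(X_{\beta_1}+\cdots+X_{\beta_k})^r$, where $c_{\beta,j}(x)$ is an explicit $x$-independent linear combination of the coefficients $a_{\alpha,j}(x)$. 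Define $\sigma_\beta(x) \in J^r(\bR^m,\bR^n)$ to be the $r$-jet at $x$ of the germ $y \mapsto \bigl(l_\beta(y-x)\bigr)^r \cdot \bigl( c_{\beta,1}(x), \ldots, c_{\beta,n}(x) \bigr)$, where $l_\beta = dx_{\beta_1} + \cdots + dx_{\beta_k}$.

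Then I would verify the three listed properties in turn. Primitivity of $\sigma_\beta$ with respect to $\tau_\beta = \ker(l_\beta)$ is immediate from the explicit model for primitive sections built at the end of Section \ref{Taylor polynomials}: by construction each $\sigma_\beta(x)$ is the $r$-jet of a scalar multiple of $(l_\beta)^r$. Smooth dependence on $\sigma$ follows because each $c_{\beta,j}$ is a universal linear expression in the coefficients of $\sigma$. For the support property, on $Op(A)$ all $a_{\alpha,j}$ vanish, so all $c_{\beta,j}$ vanish and therefore each $\sigma_\beta$ vanishes as well. Finally, the reorganization above is precisely the computation displayed just before the lemma, so the equality $\sigma = \sum_\beta \sigma_\beta$ is tautological.

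The main obstacle is purely bookkeeping, namely keeping the three index sums straight and checking that the Taylor-polynomial recentering is compatible with the trivialization; there is no analytic content beyond the polynomial identity itself. I expect the proof to be short enough that the author will simply say ``apply the identity termwise and regroup,'' as already done in the derivation preceding the statement.
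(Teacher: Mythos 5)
Your proposal is correct and is essentially identical to the paper's own argument: the lemma is deduced by applying the displayed polynomial identity termwise to the monomials of the homogeneous degree-$r$ Taylor polynomial, regrouping the coefficients by the multi-index $\beta$, and observing that each resulting term $(X_{\beta_1}+\cdots+X_{\beta_k})^r$ is primitive with respect to $\tau_\beta$. The three bullet points then follow exactly as you verify them, since the coefficients $c_{\beta,j}(x)$ are universal linear combinations of the coefficients of $\sigma(x)$.
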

\begin{remark} Observe that the number of indices $\beta$ appearing in the sum only depends on $m$ and $r$.
\end{remark}
\begin{figure}[h]
\includegraphics[scale=0.4]{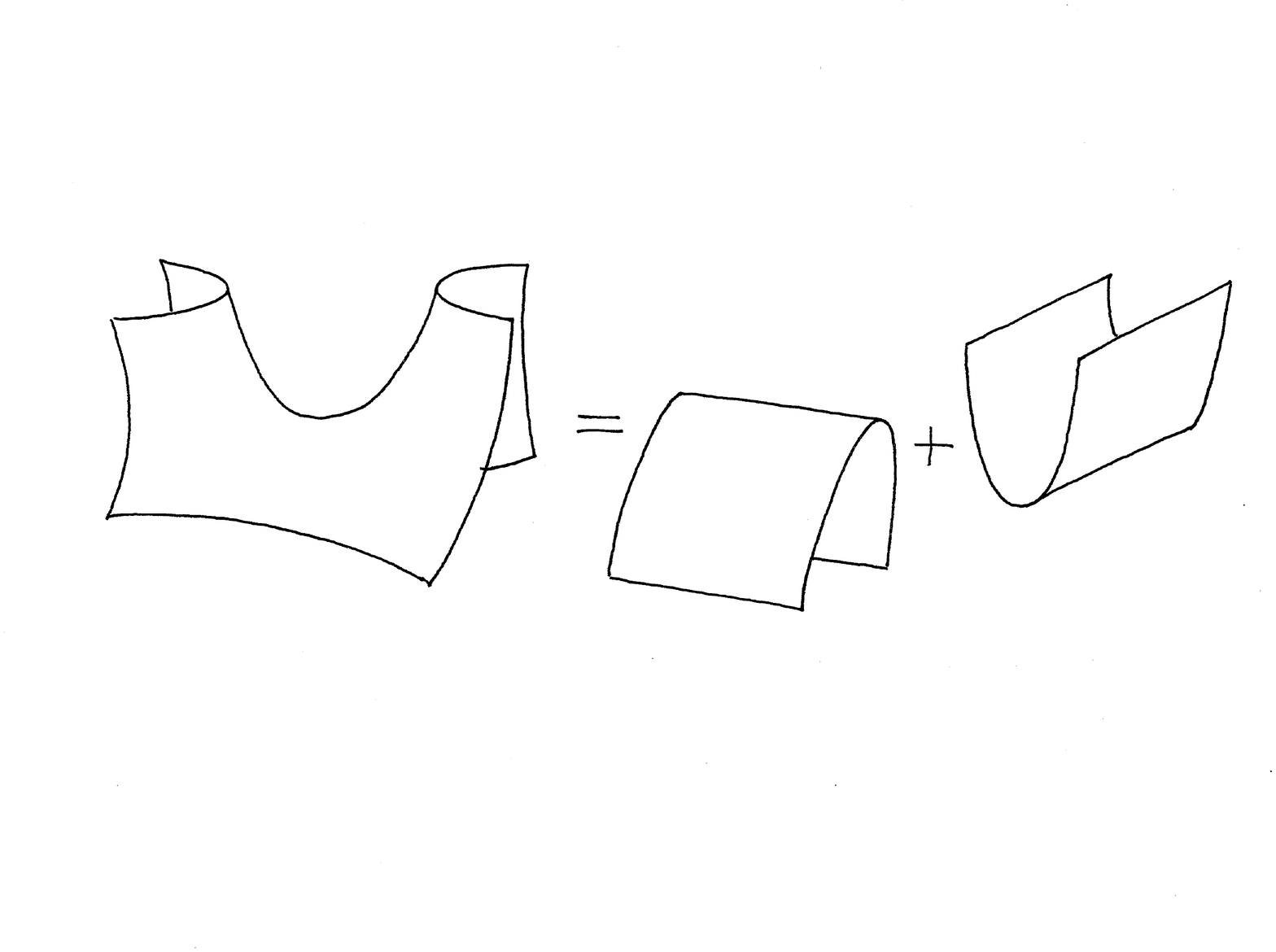}
\caption{Example: we can decompose the homogeneous degree $2$ polynomial $x^2-y^2$ into the sum of $x^2$ and $-y^2$.}
\label{decomposition}
\end{figure}

\subsection{Reduction to the primitive case}
We are now ready to reduce the localized holonomic approximation lemma for $l$-holonomic sections \ref{localized holonomic approximation lemma for $l$-holonomic sections} to the localized holonomic approximation lemma for $\perp$-holonomic sections \ref{localized holonomic approximation lemma for perp-holonomic sections}. Recall first that by the discussion of Section \ref{Reduction to the case $l=r-1$}, we only need to prove Theorem \ref{localized holonomic approximation lemma for $l$-holonomic sections} in the case $l=r-1$. Let us therefore assume that Theorem \ref{localized holonomic approximation lemma for perp-holonomic sections} holds and let $\sigma:I^m \to J^r(\bR^m, \bR^n)$ be a section such that $\sigma=0$ on $Op(\partial I^m)$ and such that $\sigma^{(r-1)}=0$ on all of $I^m$. Lemma \ref{decomposition} gives us a decomposition $\sigma= \sum_\beta \sigma_\beta$, where $\sigma_\beta:I^m \to J^r(\bR^m, \bR^n)$ is a section such that $\sigma_\beta=0$ on $Op(\partial I^m)$ and such that $\sigma_\beta^\perp=0$ with respect to the hyperplane field $\tau_\beta \subset TI^m$ defined in Section \ref{Decomposition into primitive sections}. We will inductively construct holonomic approximations for the partial sums of the decomposition $\sigma= \sum_\beta\sigma_\beta$. The main point in the following argument is that if an $r$-jet section has a $C^0$-small $\perp$-jet component, then in particular it also has a $C^0$-small $(r-1)$-jet component. 

Let $\beta^1, \beta^2, \ldots, \beta^N$ be an ordering of the multi-indices $\beta$ appearing in the decomposition $\sigma= \sum_\beta \sigma_\beta$ and denote by $\sigma_1, \ldots , \sigma_N$ and $\tau_1, \ldots , \tau_N$ the corresponding sections $\sigma_\beta$ and hyperplane fields $\tau_\beta$. We begin by applying Theorem \ref{localized holonomic approximation lemma for perp-holonomic sections} to the section $\sigma_1$. We obtain a $C^0$-small isotopy $F^1_t:I^m \to I^m$ such that $F^1_t=id_{I^m}$ on $Op(\partial I^m)$ and a holonomic section $\hat{\sigma}_1:I^m \to J^r(\bR^m, \bR^n)$ such that $\hat{\sigma}_1$ is $C^0$-close to $\sigma_1$ on $Op\big( F_1(I^k) \big)$, such that $\hat{\sigma}_1^{(r-1)}$ is $C^0$-small and such that $\hat{\sigma}_1=0$ on $Op(\partial I^m)$. This concludes the base case of the induction. 

Suppose that for some $j<N$ we have constructed a $C^0$-small isotopy $F^j_t:I^m \to I^m$ such that $F^j_t=id_{I^m}$ on $Op(\partial I^m)$ and a holonomic section $\hat{\sigma}_j:I^m \to J^r(\bR^m, \bR^n)$ such that $\hat{\sigma}_j$ is $C^0$-close to $\sum_{i \leq j} \sigma_i$ on $Op \big( F^j_1(I^k) \big)$, such that $\hat{\sigma}_j^{(r-1)}$ is $C^0$-small and such that $\hat{\sigma}_j=0$ on $Op(\partial I^m)$. Apply Theorem \ref{localized holonomic approximation lemma for perp-holonomic sections} to the section $(F^j_1)^*\sigma_{j+1}:I^m \to J^r(\bR^m, \bR^n)$, which satisfies $(F^j_1)^*\sigma_{j+1}=0$ on $Op(\partial I^m)$ and $(F^j_1)^*\sigma_{j+1}^{\perp}=0$ on all of $I^m$ with respect to the hyperplane field $(F^j_1)^* \tau_{j+1}$. We obtain a $C^0$-small isotopy $\widetilde{F}_t:I^m \to I^m$ such that $\widetilde{F}_t=id_{I^m}$ on $Op(\partial I^m)$ and a holonomic section $\widetilde{\sigma}:I^m \to J^r(\bR^m, \bR^n)$ such that $\widetilde{\sigma}$ is $C^0$-close to $(F^j_1)^*\sigma_{j+1}$ on $Op\big(\widetilde{F}_1(K) \big)$, such that $\widetilde{\sigma}^{(r-1)}$ is $C^0$-small and such that $\widetilde{\sigma}=0$ on $Op(\partial I^m)$. Set $F^{j+1}_t=F^j_t \circ \widetilde{F}_t$ and $\hat{\sigma}_{j+1}= \hat{\sigma}_j + (F^j_1)_*\widetilde{\sigma}$. This completes the inductive step. 

At the last step we obtain a $C^0$-small isotopy $F_t=F^N_t:I^m \to I^m$ such that $F_t=id_{I^m}$ on $Op(\partial I^m)$ and a holonomic section $\hat{\sigma}=\hat{\sigma}_N$ such that $\hat{\sigma}$ is $C^0$-close to $\sigma= \sum_{i=1}^N \sigma_i$ on $Op\big( F_1(I^k) \big)$, such that $\hat{\sigma}^{(r-1)}$ is $C^0$-small and such that $\hat{\sigma}=0$ on $Op(\partial I^m)$. This is exactly what we wanted. We have thus sucessfully reduced Theorem \ref{localized holonomic approximation lemma for $l$-holonomic sections} to Theorem \ref{localized holonomic approximation lemma for perp-holonomic sections}. 

It remains to discuss the reduction of the parametric localized holonomic approximation lemma for $l$-holonomic sections  \ref{parametric localized holonomic approximation lemma for $l$-holonomic sections} to the parametric localized holonomic approximation lemma for $\perp$-holonomic sections  \ref{parametric localized holonomic approximation lemma for perp-holonomic sections}. However, the proof only differs in notation, namely one just needs to add a parameter everywhere. The key point here is that given a family $\sigma_z:I^m \to J^r(\bR^m, \bR^n)$ of sections such that $\sigma_z^{(r-1)}=0$, the decomposition given by Lemma \ref{decomposition} depends smoothly on the parameter $z$.

\section{Transversality adjustment}\label{transversality adjustment}

\subsection{The transversality condition}

By the reductions carried out in Sections \ref{Localization of the problem} and \ref{Geometry of jet spaces}, we are left with the task of proving Theorems  \ref{localized holonomic approximation lemma for perp-holonomic sections} and  \ref{parametric localized holonomic approximation lemma for perp-holonomic sections}, the local relative holonomic approximation lemmas for $\perp$-holonomic sections. The strategy of proof, as in classical holonomic approximation, is to take advantage of the room provided by the positive codimension of $I^k$ in $I^m$, where $k<m$. This room is used to interpolate between the Taylor polynomials determined by the non-holonomic section that we wish to approximate. More precisely, the idea is to wiggle the subset $I^k \subset I^m$ back and forth in the ambient space $I^m$ and interpolate between Taylor polynomials along the wiggles. However, in order to obtain the fine estimates needed for the desired control on the $\perp$-jet component, our wiggles must be parallel to the hyperplane field $\tau$ under consideration. We therefore run into difficulties when $\tau$ is tangent to the subset $I^k$ which we want to wiggle. In this section we will perform yet another reduction, so that we only need to consider hyperplane fields $\tau$ which are transverse to $I^k$. 

The idea is to further localize the problem by subdividing the cube $I^m$ into very small subcubes, on each of which the hyperplane field $\tau$ is almost constant. We show in Section \ref{almost tangent} below that on the subcubes where the hyperplane field $\tau$ is almost tangent to $I^k$, the desired holonomic approximation can be explicitly constructed by hand. Moreover, in this case no wiggling is necessary. The accuracy of the approximation will depend on the extent to which $\tau$ is almost tangent to $I^k$, but given a fixed degree of accuracy desired we can always restrict our attention to those subcubes on which the angle between $\tau$ and $I^k$ is sufficiently small. We explain precisely how to achieve this transversality adjustment in Section \ref{reduction to transverse case}. On the remaining cubes, the hyperplane field $\tau$ is transverse to $I^k$ and therefore we can perform the wiggling parallel to $\tau$ described in the previous paragraph. This last step is carried out in Section \ref{holonomic approximation with controlled cutoff}. We illustrate our strategy in Figure \ref{two-step}.

\begin{figure}[h]
\includegraphics[scale=0.7]{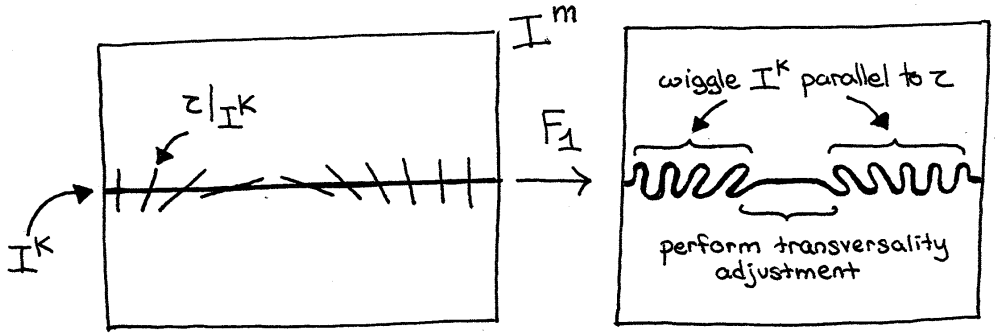}
\caption{The two steps: a transversality adjustment and a wiggling parallel to $\tau \subset TI^m$.}
\label{two-step}
\end{figure}

\subsection{Almost tangent hyperplane fields}\label{almost tangent}
Let $V,W \subset \bR^m$ be two linear subspaces of the same dimension. Recall that the angle $\measuredangle(V, W )$ between $V$ and $W$ is defined as $\measuredangle(V,W)=\sup_{v \in W \setminus 0} \big( \inf_{w \in V \setminus 0 } \measuredangle(v,w) \big)$. When $\dim V < \dim W$, we set $\measuredangle(V,W)= \inf_U \measuredangle(V,U)$, where the infimum is taken over all linear subspaces $U \subset W$ such that $\dim U = \dim V$. Equivalently, we have $\measuredangle(V,W)= \inf_U \measuredangle(U,W)$, where the infimum is taken over all subspaces $U \supset V$ such that $\dim U = \dim W$. For any two distributions $\tau, \eta \subset TI^m$ we define $\measuredangle(\tau, \eta)= \sup_{x \in I^m} \measuredangle( \tau_x, \eta_x)$. The main goal of this section is to establish the following local calculation, where we think of the hyperplane $H=\bR^{m-1} \times 0 \subset \bR^m$ as a constant hyperplane field on $I^m$.
\begin{lemma}\label{calculation for almost tangent}
Fix $k<m$. Let $\sigma:I^m \to J^r(\bR^m,\bR^n)$ be a section and let $\tau \subset TI^m$ be a hyperplane field such that the following properties hold.
\begin{itemize}
\item $\sigma^\perp=0$ with respect to $\tau$.
\item $\sigma=0$ on $Op(\partial I^m)$.
\end{itemize}
Then for every $\delta>0$ there exists a holonomic section $\hat{\sigma}:I^m \to J^r(\bR^m, \bR^n)$ such that the following properties hold.
\begin{itemize}
\item $\text{dist}_{C^0}(\hat{\sigma},\sigma) \leq C\, || \sigma ||_{C^r}  \big(  \measuredangle(\tau,H) + \delta \big)$ on $Op(I^{k})$.
\item $||\hat{\sigma}^{\perp}||_{C^0} \leq C\, || \sigma ||_{C^r} \big( \measuredangle(\tau,H) + \delta \big) $ on all of $I^m$.
\item $\hat{\sigma}=0$ on $Op(\partial I^m)$.
\end{itemize}
\end{lemma}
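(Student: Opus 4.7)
The plan is to write $\hat{\sigma}$ down explicitly as the $r$-jet of a cut-off polynomial in $y_m$, exploiting that $\tau$ is close to $H=\ker(dy_m)$ to avoid wiggling entirely: since $I^k \subset H$, the natural holonomic model near $I^k$ is already a polynomial in the $y_m$-direction, and we only pay for deviations of $\tau$ from $H$ and for the cutoff scale.

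First I would use the parameterization of primitive sections from Section \ref{Taylor polynomials} to write
\[ \sigma(x) \;=\; j^r_x\!\big( v(x)\cdot l_x(y-x)^r \big), \]
where $v:I^m \to \bR^n$ is smooth with $v=0$ on $Op(\partial I^m)$ and $l_x$ is the unit covector with $\ker l_x = \tau_x$, chosen close to $dy_m$. In standard coordinates $(l_x)_m = 1 + O(\measuredangle^2)$ and $(l_x)_j = O(\measuredangle)$ for $j<m$, where $\measuredangle = \measuredangle(\tau,H)$. Then, fixing a smooth cutoff $\eta:\bR\to [0,1]$ equal to $1$ on $[-1/2,1/2]$ and vanishing outside $[-1,1]$, I would set
\[ h(y) \;=\; \eta(y_m/\delta)\cdot v(y)\cdot y_m^r, \qquad \hat\sigma = j^r(h), \]
and take the neighborhood in the conclusion to be $Op(I^k) = \{|y_m|<\delta/2\}\cap I^m$.

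All three bullets follow by a direct Leibniz computation. On the slab the cutoff is trivial and
\[ \partial^\beta h(y) \;=\; \sum_{\gamma=0}^{\beta_m} \binom{\beta_m}{\gamma}\frac{r!}{(r-\gamma)!}\,y_m^{r-\gamma}\,\partial^{\beta-\gamma e_m} v(y). \]
For $\beta=re_m$ only the $\gamma=r$ term contributes at $y_m=0$, giving $\partial_{y_m}^r h = r!\,v(y) + O(\|\sigma\|_{C^r}\delta)$, which matches the $\partial_z^{re_m}$-component of $\sigma(y)$, namely $r!\,v(y)(l_y)_m^r$, up to $O(\|\sigma\|_{C^r}(\measuredangle^2+\delta))$. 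For $|\beta|=r$ with $\beta\neq re_m$ every surviving term carries a factor $y_m^{r-\gamma}$ with $\gamma\leq \beta_m\leq r-1$, hence $|\partial^\beta h|=O(\|\sigma\|_{C^r}\delta)$; the matching component of $\sigma$, $r!\,v(y)\prod_j (l_y)_j^{\beta_j}$, contains a factor $(l_y)_j=O(\measuredangle)$ for some $j\neq m$ and so is $O(\|\sigma\|_{C^r}\measuredangle)$. For $|\beta|<r$, $\partial^\beta h=O(\|\sigma\|_{C^r}\delta)$ while $\sigma$'s component vanishes by primitivity. Summing gives $\text{dist}_{C^0}(\hat\sigma,\sigma) \leq C\|\sigma\|_{C^r}(\measuredangle+\delta)$ on $Op(I^k)$. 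For $\hat\sigma^\perp$ on all of $I^m$, the same expansion on the annulus $\delta/2\leq |y_m|\leq \delta$ uses $|\eta^{(l)}(y_m/\delta)|\leq C\delta^{-l}$; each derivative of the cutoff can only hit the factor $y_m^r$ (not $v$), lowering its exponent by $1$, so the blow-up $\delta^{-1}$ is compensated by the ambient bound $|y_m|\leq \delta$. Careful bookkeeping yields $|\partial^\beta h|=O(\|\sigma\|_{C^r}\delta^{r-\beta_m})$ uniformly on $I^m$, which is $O(\|\sigma\|_{C^r}\delta)$ for every $\beta$ with $\beta_m\leq r-1$, in particular for every $|\beta|\leq r-1$ and every $r$-th partial except $\partial_{y_m}^r h$. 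Only the latter may be of order $\|\sigma\|_{C^r}$, but it enters into any $r$-th directional derivative $\partial_w^r h$ with $w\in \tau_y$ multiplied by $w_m^r$, and $l_y(w)=0$ with $|w|=1$ forces $|w_m|=O(\measuredangle)$, contributing $O(\|\sigma\|_{C^r}\measuredangle^r)=O(\|\sigma\|_{C^r}\measuredangle)$. Combining, $\|\hat\sigma^\perp\|_{C^0}\leq C\|\sigma\|_{C^r}(\measuredangle+\delta)$. The third bullet is immediate: on $Op(\partial I^m)$ either $v=0$ or $\eta(y_m/\delta)=0$, so $h\equiv 0$ and $\hat\sigma=0$.

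The main obstacle is the annulus bookkeeping: the derivatives of $\eta(\cdot/\delta)$ blow up as $\delta^{-l}$, but they attack only the $y_m^r$ factor and lower its exponent by a matching amount, while the remaining $|y_m|$ in the annulus is bounded above by $\delta$. The net outcome is that every partial of $h$ of order strictly less than $r$, as well as every $r$-th partial other than the pure $\partial_{y_m}^r h$, is globally $O(\delta)$; the almost-tangency hypothesis then absorbs the one surviving large derivative when one passes to $\tau$-directional derivatives, as required for $\hat\sigma^\perp$.
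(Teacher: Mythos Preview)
Your construction is exactly the paper's: the same cutoff-times-$y_m^r$ function $h(y)=\psi_\delta(y_m)\,y_m^r\,v(y)$, the same Leibniz expansion, and the same two-step comparison (first bound $\partial^\beta h$ by $C\|\sigma\|_{C^r}\delta^{r-\beta_m}$, then use $|w_m|\le\measuredangle(\tau,H)$ for unit $w\in\tau$). One small slip: the $\perp$-norm is not governed by the pure directional derivatives $\partial_w^r h$, but (see Remark~\ref{remarks on calculation}) by $\partial_\nu\partial_\alpha h$ with $|\alpha|<r$ and $\nu\in\tau$; there the large term $\partial_{y_m}^r h$ enters with coefficient $\nu_m$, not $\nu_m^{\,r}$, though the bound $|\nu_m|\le\measuredangle$ still gives the desired estimate.
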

\begin{remark}\label{remarks on calculation} $ $
 \begin{enumerate} 
 \item The constant $C>0$ only depends on $m$ and $r$. We can extract an explicit upper bound for $C$ from the proof if we so desire, but this is not important.
 \item To be more precise, for a function germ $h:Op(x) \subset \bR^m \to \bR$ we define $||j^r(h)(x)||=\text{sup} ||\partial_\alpha h(x) ||$, where the supremum is taken over all multi-indices $\alpha$ of order $|\alpha| \leq r$. Similarly, we define $||j^\perp(h)(x)||=\text{sup}  || \partial_\nu( \partial_\alpha h )(x)||$, where the supremum is taken over all multi-indices $\alpha$ of order $|\alpha|<r$ and over all unit vectors $\nu \in \tau_x$.  For a section $s:I^m \to J^r(\bR^m, \bR^n)$ we set $||s||_{C^0}=\sup_{x\in I^m} || s(x)||$ and $|| s^{\perp} ||_{C^0}= \sup_{x \in I^m} ||s^{\perp}(x)||$. These are the $C^0$ norms which appear in the statement of the Lemma. There are of course many other equivalent definitions, but they all differ by a constant which can be absorbed into $C$.
 \item We can define the $C^r$ norm in a similar way. Think of $\sigma$ as a family of germs $y \mapsto h_x(y)$, $y \in Op(x)$, parametrized by $x \in I^m$. We can differentiate the vector $h_x(y) \in \bR^n$ with respect to $x$ or with respect to $y$. Set $||\sigma||_{C^r}=\sup||  \frac{\partial}{\partial x_\beta}\frac{\partial}{\partial y_{\alpha}}h|_{(x,x)} ||$, where the supremum is taken over all multi-indices $\alpha,\beta$ of orders $|\alpha|,|\beta|\leq r$ and all points $x \in I^m$.
 \end{enumerate}
\end{remark}
\begin{proof}
Throughout the proof $C>0$ will denote a constant, depending only on $m$ and $r$, but which might be replaced with a bigger such constant whenever necessary. Assume without loss of generality that the angle between $\tau$ and $H$ is small, say $\measuredangle(\tau, H) < \pi/4$. Let $u_x \in \bR^m$, $x \in I^m$, be the unique field of unit vectors such that for every $x \in I^m$ we have $u_x \perp \tau_x$ and $\measuredangle(\tau_x,H_x)=\measuredangle(u_x, \partial_m)$, where $\partial_m=(0, \ldots, 0,1) \in \bR^m$.  We know from Section \ref{Taylor polynomials} that for every $x \in I^m$, $\sigma(x) \in J^r(\bR^m, \bR^n)$ is the $r$-jet at $x$ of a germ $y \mapsto \big( l_x(y-x) \big)^r \cdot \big(  v_1(x), \ldots , v_n(x) \big) \in \bR^n$, $y \in Op(x) \subset \bR^m$, where $l_x:\bR^m \to \bR$ is the linear function $l_x(\cdot)=\langle \cdot \, ,u_x \rangle$ and $v=(v_1, \ldots, v_n):I^m \to \bR^n$ is a function such that $v=0$ on $Op(\partial I^m)$. Fix once and for all a cutoff function $\psi: \bR \to \bR$ such that $\psi(t)=1$ when $|t|<1/2$ and $\psi(t)=0$ when $|t|>1$. For $\delta>0$ small, set $\psi_\delta(t)=\psi(t/\delta)$. Define a holonomic section $\hat{\sigma}:I^m \to J^r(\bR^m, \bR^n)$ by $\hat{\sigma}=j^r(h)$, where $h:I^m \to \bR^n$ is the function
\[ h(x)=\psi_{\delta}(x_m) \cdot x_m^r  \cdot \big(v_1(x) , \ldots , v_n(x) \big), \qquad x=(x_1, \ldots , x_m) \in I^m. \]

To verify that $\hat{\sigma}$ satisfies the desired properties, we introduce an auxiliary section $s:I^m \to J^r(\bR^m, \bR^n)$ whose $r$-jet $s(x) \in J^r(\bR^m, \bR^n)$ at the point $x =(x_1, \ldots , x_m)\in I^m$ corresponds to the germ $y \mapsto (y_m-x_m)^r \cdot \big(v_1(x), \ldots ,v_n(x) \big) \in \bR^n$, $y=(y_1, \ldots , y_m) \in Op(x) \subset \bR^m$. Indeed, $\measuredangle(u,\partial_m)=\measuredangle(\tau, H)$, and hence $\text{dist}_{C^0}(s,\sigma) \leq C \, ||\sigma   ||_{C^r} \, \, \measuredangle(\tau,H)$ on all of $I^m$. On the other hand, $\text{dist}(\hat{\sigma},s) \leq C \, || \sigma ||_{C^r} \, \delta $ on $\{ |x_m | < \delta/2 \} \subset \bR^m$ and we are free to choose $\delta$ as small as desired. This proves the first property stated in Lemma \ref{calculation for almost tangent}. The third property holds by inspection. It remains to prove the second property. 

We compute explicitly the partial derivatives $\partial_\alpha h$ for a multi-index $\alpha$ of order $|\alpha| \leq r$ at a point $x \in I^m$. Write $\alpha=(\beta, \gamma)$, where $\beta$ consists of $N$ indices $1\leq \beta_j<m$ and $\gamma$ consists of $M$ indices $\gamma_j=m$. Then we have the following formula.
\[ \partial_\alpha h(x) =  \sum_{j=0}^M {M \choose j} \Big[ \big( \sum_{i=0}^j {j \choose i} \psi^{(j-i)}_\delta(x_m) \cdot \frac{r!}{(r-i)!} x_m^{r-i} \big) \cdot ( \partial^{M-j}_m \partial_\beta v)(x) \Big], \]
\[ \text{with } \quad |\psi^{(j-i)}_\delta(x_m)|=|\frac{1}{\delta^{j-i}} \psi^{(j-i)}\big(\frac{x_m}{\delta}\big)| \leq \frac{1}{\delta^{j-i}} || \psi ||_{C^r}. \]

For $M<r$ we can therefore bound $|| \partial_\alpha h ||_{C^0} \leq C\,  || \sigma ||_{C^r}\,  \delta$ on all of $I^m$. In particular this bound holds for all multi-indices $\alpha$ of order $|\alpha| <r$. For the multi-index $\alpha=(m,\ldots,m)$ of order $|\alpha|=r$ corresponding to the pure $r$-th derivative $\partial^r_m$ we have $|| \partial_\alpha h ||_{C^0} \leq C \, || \sigma ||_{C^r}$. Hence the inequality $|| \hat{\sigma} ||_{C^0} \leq C || \sigma ||_{C^r}$ also holds.

Fix an index $\alpha$ of order $|\alpha|<r$ and let $\nu \in \tau_x$ be a unit vector. Write $\nu=\sum_j a_j \partial_j$ in terms of the standard basis $\partial_1, \ldots , \partial_m$ of $\bR^m$. Observe that $|a_m|=|\langle \nu, \partial_m \rangle| =|\langle \nu, \partial_m-u_x \rangle| \leq || \partial_m - u_x || \leq  \, \measuredangle(\tau,H)$. If follows that
 \[ || \partial_\nu (\partial_\alpha h )(x)|| \leq || a_m \partial_m ( \partial_\alpha h)(x)||  + ||\sum_{j=1}^{m-1} a_j \partial_j (\partial_\alpha h)(x) || \leq |a_m| \ || \hat{\sigma} ||_{C^0}  + \sqrt{ \sum_{j=1}^{m-1} \big( \partial_j ( \partial_\alpha h)(x) \big)^2 } \]
and therefore that $| \partial_\nu ( \partial_\alpha h)(x) | \leq C \, || \sigma ||_{C^r}  \big(\, \measuredangle(\tau,H) + \delta \big)$. But the point $x \in I^m$, the multi-index $\alpha$ of order $|\alpha|<r$ and the unit vector $\nu \in \tau_x$ were all chosen arbitrarily, and therefore we have proved the remaining inequality $||\hat{ \sigma}^{\perp} ||_{C^0} \leq C \, || \sigma ||_{C^r}  \big(\, \measuredangle(\tau,H) + \delta \big)$. See Figure \ref{picture of transversality adjustment} for an illustration of the argument. \end{proof}
\begin{figure}[h]
\includegraphics[scale=0.65]{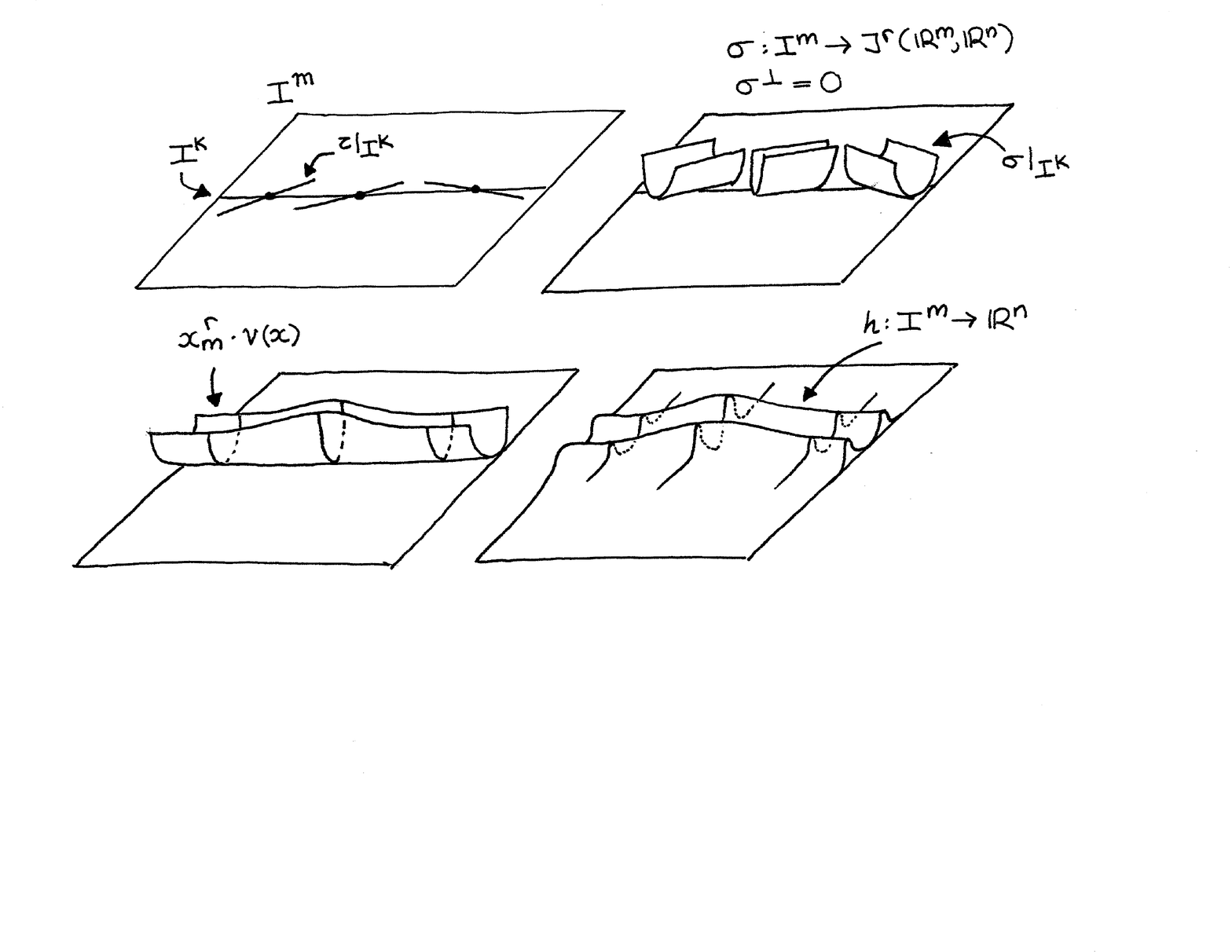}
\caption{The error in the transversality adjustment is proportional to the angle between $\tau$ and $H$. }
\label{picture of transversality adjustment}
\end{figure}
By adding a subscript everywhere in the above proof we deduce the following parametric version of Lemma \ref{calculation for almost tangent}. Observations analogous to the ones made in Remark \ref{remarks on calculation} apply.
\begin{lemma}\label{parametric calculation for almost tangent}
Fix $k<m$. Let $\sigma_z:I^m \to J^r(\bR^m,\bR^n)$ be a family of sections and let $\tau_z \subset TI^m$ be a family of hyperplane fields parametrized by the unit cube $I^q$ such that the following properties hold.
\begin{itemize}
\item $\sigma_z^\perp=0$ with respect to $\tau_z$.
\item $\sigma_z=0$ on $Op(\partial I^{k})$.
\item $\sigma_z=0$ on all of $I^m$ for $z \in Op(\partial I^q)$.
\end{itemize}
Then for every $\delta>0$ there exists a family of holonomic sections $\hat{\sigma}_z:I^m \to J^r(\bR^m, \bR^n)$ such that the following properties hold.
\begin{itemize}
\item $\text{dist}_{C^0}(\hat{\sigma}_z,\sigma_z) \leq C\, || \sigma_z ||_{C^r}  \big( \measuredangle(\tau_z,H) + \delta \big)$ on $Op(I^{k})$.
\item $||\hat{\sigma}^{\perp}_z||_{C^0} \leq C\, || \sigma_z ||_{C^r} \big(  \measuredangle(\tau_z,H)  + \delta \big)$ on all of $I^m$.
\item $\hat{\sigma}_z=0$ on $Op(\partial I^m)$.
\item $\hat{\sigma}_z=0$ on all of $I^m$ for $z \in Op(\partial I^q)$.
\end{itemize}
\end{lemma}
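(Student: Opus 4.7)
The plan is to adapt the proof of Lemma~4.1 essentially verbatim by inserting the parameter $z \in I^q$ into every construction. The hypothesis that $\sigma_z$ and $\tau_z$ depend smoothly on $z$ ensures that every object built in the non-parametric argument varies smoothly in $z$, so the parametric statement follows with no genuinely new idea. The main task is bookkeeping, plus verifying that the two quantitative estimates, which are pointwise in $z$, survive untouched.

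First, as in the proof of Lemma~4.1, I would reduce to the case $\measuredangle(\tau_z,H) < \pi/4$ for all $z \in I^q$; outside this regime the right hand side of the two estimates is bounded below by a positive constant, so one may simply set $\hat{\sigma}_z = 0$ and absorb the loss into $C$. Under this transversality assumption, for each pair $(z,x) \in I^q \times I^m$ there is a unique unit vector $u_{z,x} \in \bR^m$ with $u_{z,x} \perp \tau_{z,x}$ and $\measuredangle(u_{z,x},\partial_m) = \measuredangle(\tau_{z,x}, H)$, and the map $(z,x) \mapsto u_{z,x}$ is smooth. Then, following Section~3.1, one writes $\sigma_z(x)$ as the $r$-jet at $x$ of the germ $y \mapsto (\langle y-x, u_{z,x}\rangle)^r \cdot v_z(x)$, for a smooth family of functions $v_z: I^m \to \bR^n$ satisfying $v_z = 0$ on $Op(\partial I^m)$ and $v_z \equiv 0$ for $z \in Op(\partial I^q)$.

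Second, fix the same cutoff $\psi_\delta$ as in the non-parametric proof and define the family of holonomic approximations by $\hat{\sigma}_z = j^r(h_z)$, where
$$h_z(x) = \psi_\delta(x_m) \cdot x_m^r \cdot v_z(x).$$
The two boundary conclusions of the lemma are now automatic: since $v_z$ vanishes on $Op(\partial I^m)$ for every $z$, so does $h_z$, and hence $\hat{\sigma}_z = 0$ on $Op(\partial I^m)$; and since $v_z \equiv 0$ for $z \in Op(\partial I^q)$, the function $h_z$ is identically zero for such $z$, giving $\hat{\sigma}_z \equiv 0$ for $z \in Op(\partial I^q)$.

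Third, the quantitative estimates on $\mathrm{dist}_{C^0}(\hat{\sigma}_z, \sigma_z)$ on $Op(I^k)$ and on $\|\hat{\sigma}_z^\perp\|_{C^0}$ on all of $I^m$ are pointwise in $z$ and follow immediately from the estimates proved in Lemma~4.1, applied with $z$ held fixed: the auxiliary section $s_z(x)$ whose $r$-jet at $x$ corresponds to $y \mapsto (y_m - x_m)^r \cdot v_z(x)$ is again a good comparison, the same derivative computation controls $\partial_\alpha h_z$, and the constant $C$ depends only on $m$ and $r$, hence is uniform in $z$. The only point where I expect to be careful is the smooth dependence of $u_{z,x}$ on the parameter, and this is automatic once $\tau_z$ is uniformly bounded away from being tangent to $H$; beyond this routine bookkeeping I do not anticipate any substantive obstacle, since the heart of the argument --- the explicit cutoff-interpolation formula --- is unchanged from the non-parametric case.
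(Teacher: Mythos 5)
Your proposal is correct and follows the paper's proof exactly: the paper proves this lemma simply by ``adding a subscript $z$ everywhere'' in the proof of the non-parametric Lemma \ref{calculation for almost tangent}, which is precisely what you carry out, and the key observations (smooth dependence of $u_{z,x}$ and $v_z$ on $z$, the boundary conditions in $z$ inherited from $v_z$, and the $z$-uniformity of the constant $C$) are the right ones. One cosmetic caveat: rather than setting $\hat{\sigma}_z=0$ for the parameters $z$ with $\measuredangle(\tau_z,H)\geq \pi/4$ (which would make the family discontinuous in $z$), it is cleaner to keep the same formula $h_z(x)=\psi_\delta(x_m)\,x_m^r\,v_z(x)$ for all $z$ and note that the bound $\|\hat{\sigma}_z\|_{C^0}\leq C\|\sigma_z\|_{C^r}$ makes both estimates trivially true for such $z$ after enlarging $C$.
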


Of course, there is nothing special about the hyperplane $\bR^{m-1} \times 0 \subset \bR^m$, which was only fixed for concreteness. In fact, Lemmas \ref{calculation for almost tangent} and \ref{parametric calculation for almost tangent} hold in the following more general form. Observe first that we may replace the constant hyperplane field $H$ by any hyperplane field $\widetilde{H} \subset TI^m$ such that $\bR^k \times 0^{m-k} \subset \widetilde{H}_x$ at all points $x \in I^m$. To see this it suffices to consider a local change of coordinates near the subset $I^k \subset I^m$ which fixes $I^k$ pointwise and which sends $H_x $ to $\widetilde{H}_x$ for all $x \in Op(I^k)\setminus Op(\partial I^m)$. It follows from this observation that in the statement of Lemma \ref{calculation for almost tangent} we may replace the angle $\measuredangle(\tau,H)$ with the angle $\measuredangle(\tau  , I^k )$ formed by the distributions $\tau|_{I^k}$ and $TI^k$ along $I^k$. Indeed, $\measuredangle(\tau, I^k)=\inf \measuredangle(\tau|_{I^k}, \widetilde{H}|_{I^k})$, where the infimum is taken over all hyperplane fields $\widetilde{H} \subset TI^m$ such that $\bR^k \times 0^{m-k} \subset \widetilde{H}_x$ at all points $x \in I^m$. In the parametric case, we also allow the hyperplane field $\widetilde{H}_z \subset TI^m$ to vary with the parameter $z \in I^q$. Therefore in the statement of Lemma \ref{parametric calculation for almost tangent} we may replace the angle $\measuredangle(\tau_z,H)$ with the angle $\measuredangle(\tau_z , I^k)$.

\subsection{Reduction to the transverse case}\label{reduction to transverse case} We are ready to reduce Theorem \ref{localized holonomic approximation lemma for perp-holonomic sections} to the case where the hyperplane field is transverse to the subset $I^k \subset I^m$. Fix an arbitrary hyperplane field $\tau \subset TI^m$. Let $\sigma:I^m \to J^r(\bR^m,\bR^n)$ be a section such that $\sigma^{\perp}=0$ with respect to $\tau$ and such that $\sigma=0$ on $Op(\partial I^m)$. Fix $\varepsilon>0$ small, the desired accuracy for the $C^0$-approximation we must construct. Consider the cubical stratification of the subset $I^k \subset I^m$ in which the $j$-dimensional stratum consists of the union of the $j$-dimensional faces of the cubes
\[ Q_N(j_1, \ldots , j_k)=\Big[\frac{j_1}{N},\frac{j_1+1}{N}\Big] \times \cdots \times\Big[\frac{j_k}{N},\frac{j_k+1}{N}\Big] \subset I^k , \quad -N \leq j_1, \ldots, j_k < N.\]

Let $\lambda>0$ be small enough so that $C \, || \sigma ||_{C^r} \, \lambda < \varepsilon$, where $C>0$ is the constant which appears in the statement of Lemma \ref{calculation for almost tangent}. Choose $N \in \bN$ big enough so that for each cube $Q=Q_N(j_1, \ldots , j_k)$ we have $\measuredangle(\tau_x,\tau_y)<\lambda/2$ for all $x,y \in Op(Q)  \subset I^m$. Consider the polyhedron $R=\bigcup_{j=0}^kR^j \subset I^k$, where the stratum $R^j$ consists of the union of all the $j$-dimensional faces $F$ of the cubes $Q=Q_N(j_1, \ldots , j_k)$ such that $\measuredangle(\tau_x,F) \leq \lambda$ at all points $x \in F$. The hyperplane field $\tau$ is almost tangent to the faces $F$ in $R$ and transverse to all other faces $F$ in the cubical stratification of $I^k$ under consideration. Indeed, for all  faces $F$ not in $R$ we have $\measuredangle(\tau_x,F) \geq \lambda/2$ at all points $x \in F$.

We proceed inductively to construct a holonomic approximation of $\sigma$ over the cubical skeleton of $R$, working one face at a time as in Section \ref{Localization of the problem}. At each stage we  apply Lemma \ref{calculation for almost tangent}. Since $\measuredangle(\tau,F) \leq \lambda$ for each face $F$ under consideration, the resulting holonomic approximation has $C^0$-error $<\varepsilon$. On the remaining faces $F$ which are not in $R$, we have $\tau \pitchfork F$. The following result will be proved in Section \ref{holonomic approximation with controlled cutoff}, where we think of the hyperplane $V=0 \times \bR^{m-1} \subset \bR^m$ as a constant hyperplane field on $\bR^m$.
\begin{theorem}\label{final reformulation} Let $\sigma:I^m \to J^r(\bR^m, \bR^n)$ be a section such that the following properties hold.
\begin{itemize}
\item $\sigma=0$ on $Op(\partial I^m)$.
\item $\sigma^{\perp}=0$ on all of $I^m$ with respect to $V$.
\end{itemize}
Then there exists an isotopy $F_t:I^m \to I^m$ and a holonomic section $\hat{\sigma} :I^m \to J^r(\bR^m, \bR^n)$ such that the following properties hold.
\begin{itemize}
\item $\hat{\sigma}$ is $C^0$-close to $\sigma$ on $Op\big( F_1(I^k) \big)$.
\item $\hat{\sigma}^{\perp}$ is $C^0$-small on all of $I^m$.
\item $F_t$ is $C^0$-small.
\item $F_t=id_{I^m}$ and $\hat{\sigma}=0$ on $Op(\partial I^m)$.
\item $V$ is invariant under $F_t$. 
\end{itemize}
\end{theorem}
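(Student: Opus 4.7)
Using the trivialization from Section \ref{Taylor polynomials}, the hypothesis $\sigma^\perp = 0$ with respect to $V = \ker(dx_1)$ lets me write $\sigma(x)$ as the $r$-jet at $x$ of the germ $y \mapsto (y_1 - x_1)^r v(x)$ for a smooth $v : I^m \to \bR^n$ vanishing near $\partial I^m$. The theorem then reduces to producing a $V$-preserving isotopy $F_t$ of $I^m$ and a smooth $h : I^m \to \bR^n$ such that $\partial_{x_1}^r h \approx r!\,v$ on a neighborhood of $F_1(I^k)$, while every other partial of $h$ of order at most $r$ is globally $C^0$-small and $h$ vanishes near $\partial I^m$.

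For the wiggle I would take an isotopy that fixes the first coordinate, of the form $F_t(x) = (x_1, x_2, \ldots, x_{m-1}, x_m + t\phi_N(x))$, with $\phi_N$ a small-amplitude, high-frequency oscillation along the $V$-leaves bending $I^k$ into a fine zigzag; since the $x_1$-component is preserved on the nose, $V = \ker(dx_1)$ is strictly invariant. For the holonomic approximation I would subdivide $[-1,1]$ in the $x_1$-variable into thin intervals of width $\sim 1/N$ centered at points $x_1^{j*}$, choose a partition of unity $\{\chi_j(y_1)\}$ depending only on $y_1$, and set
\[
h(y) \;=\; \sum_j \chi_j(y_1)\,(y_1 - x_1^{j*})^r\, v(x_1^{j*},\, y_2, \ldots, y_m).
\]

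The crucial design feature is that both $\chi_j$ and the polynomial factor depend only on $y_1$, so any partial $\partial^\alpha h$ in which some $\partial_{y_j}$ with $j \geq 2$ occurs carries an undifferentiated factor $(y_1 - x_1^{j*})^r = O(N^{-r})$. A direct Leibniz estimate then yields $\|\partial^\alpha h\|_{C^0} = O(N^{\alpha_1 - r})$ for every multi-index $\alpha \neq (r,0,\ldots,0)$ with $|\alpha| \leq r$, which is exactly the global smallness of $\hat\sigma^\perp$ required. In the interior of each slab one has $\chi_j \equiv 1$ and $\chi_j^{(a)} = 0$ for $a \geq 1$, so $\partial_{x_1}^r h(y) = r!\, v(x_1^{j*}, y_2, \ldots, y_m)$, which differs from $r!\,v(y)$ by $O(1/N)$.

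The main obstacle is controlling $\partial_{x_1}^r h$ across the cutoff transitions: the Leibniz terms involving $\chi_j^{(a)}$ for $a \geq 1$ combine with factors $(y_1 - x_1^{j*})^a$ of size $N^{-a}$ to produce $O(1)$ contributions that do not shrink with $N$. I would address this by two complementary devices: choosing $\{\chi_j\}$ of B-spline type with polynomial-reproduction properties so that the weighted moment identities $\sum_j \chi_j^{(a)}(y_1)(y_1 - x_1^{j*})^a = 0$ hold up to the required order, and tuning the wiggle so that its amplitude $\epsilon_N$ and frequency $N$ satisfy $\epsilon_N N \to \infty$, whereby the transverse oscillation of $F_1(I^k)$ provides enough extra averaging along the wiggled polyhedron to absorb any residual contribution. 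The boundary condition $\hat\sigma = 0$ on $Op(\partial I^m)$ is ensured by tapering both $\phi_N$ and the outermost cutoffs to zero there, compatibly with the vanishing of $v$ near $\partial I^m$.
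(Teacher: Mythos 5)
Your setup is the right one (the explicit form $\sigma(x)=j^r\big(y\mapsto (y_1-x_1)^r v(x)\big)$, a wiggle of the form $F_t(x)=(x_1,\dots,x_{m-1},x_m+t\phi(x))$ which preserves $V=\ker(dx_1)$ on the nose, and the reduction to finding $h$ with $\partial_1^r h\approx r!\,v$ near $F_1(I^k)$ while all other partials of order $\le r$ are globally small). But the construction of $h$ has a genuine gap, and you have correctly located it yourself: interpolating the Taylor basepoint by a partition of unity $\chi_j(y_1)$ \emph{in the $y_1$-variable} means the pure derivative $\partial_{y_1}^r$ falls on the cutoffs, producing the $O(1)$ Leibniz terms $\binom{r}{a}\tfrac{r!}{a!}\chi_j^{(a)}(y_1)(y_1-x_1^{j*})^a v_j$ for $a\ge 1$. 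Neither of your two repairs works. (a) The moment identities $\sum_j\chi_j^{(a)}(y_1)(y_1-x_1^{j*})^a=0$ are impossible: already for $a=1$, differentiating the linear-reproduction identity $\sum_j\chi_j(y_1)x_1^{j*}=y_1$ forces $\sum_j\chi_j'(y_1)(y_1-x_1^{j*})=-\sum_j\chi_j(y_1)=-1$. More structurally, if all these identities held then $\partial_1^r h$ would equal $r!\,v$ up to $O(1/N)$ on \emph{all} of $I^m$ with $h$ $C^0$-small and vanishing near $\partial I^m$ --- a wiggle-free global holonomic approximation, which is obstructed (restrict to an $x_1$-line and integrate $r$ times). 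Indeed, with the piecewise-linear hat functions your $h$ collapses to $O(1/N)$ identically and $\partial_1^r h\approx 0$, not $r!\,v$. (b) The ``extra averaging'' has no mechanism: $h$ and its transition slabs $\{|y_1-x_1^{j*}|\sim \tfrac12 N^{-1}\}$ depend only on $y_1$, your $F_t$ preserves $y_1$, and $I^k$ surjects onto $[-1,1]$ in the $y_1$-coordinate (for $k\ge1$), so every transition slab meets $Op\big(F_1(I^k)\big)$ no matter how you tune amplitude and frequency. The pointwise $O(1)$ error in $\partial_1^r h$ therefore lands on the wiggled polyhedron and the first conclusion of the theorem fails.

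The fix in the paper is to make the interpolation parameter a \emph{different} variable from the pure-derivative variable: on each slab $R_j=[(2j-1)\delta,(2j+1)\delta]\times I^{m-2}\times[-\varepsilon,\varepsilon]$ one sets $g(x)=h\big(p(x),x\big)$ with basepoint $p(x)=\big(b(x_m),x_2,\dots,x_{m-1},0\big)$, where $b$ slides the first coordinate of the basepoint as a function of the \emph{fiber} coordinate $x_m$ (the direction of the wiggle). Then $\partial_1^r g=\partial_{y_1}^r h|_{(p(x),x)}=r!\,v(p(x))$ exactly --- no derivative of the interpolating function ever enters the pure $r$-th derivative, and the error there is just $r!\,|v(p(x))-v(x)|=O(\varepsilon+\delta)$. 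The derivatives of $b$ (of size $\delta/\varepsilon^i$) contaminate only the mixed partials involving $\partial_m$, i.e.\ only the $\perp$-components, which are merely required to be small, not to approximate anything; and the locally defined $g$'s glue precisely on the wiggled neighborhood $U=F_1(\{|x_m|<\varepsilon/4\})$ because there $b$ has already saturated to its boundary values. The subsequent cutoff $\phi$ is likewise a function of the $F_1$-straightened $x_m$, so its derivatives also enter only the $\perp$-components, each contributing $O(\delta/\varepsilon)$. You should restructure your construction along these lines; as written, the argument does not close.
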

\begin{remark}\label{technical} The more accurate the $C^0$-approximation desired, the bigger the derivative $dF_t$ will need to be. However, for a fixed $C^0$-accuracy we can arrange it so that $\text{dist}_{C^0}(F_t,id_{I^m})$ is arbitrarily small and so that $F_t=id_{I^m}$ outside of an arbitrarily small neighborhood of $I^k$ in $I^m$ while keeping $||dF_t||_{C^0}$ uniformly bounded. This scale invariance follows from the explicit construction of $\hat{\sigma}$ and $F_t$ which is carried out in Section \ref{holonomic approximation with controlled cutoff}.
\end{remark}
Assuming Theorem \ref{final reformulation},  we continue the inductive process over the rest of the skeleton of $I^m$ to obtain a global holonomic $\varepsilon$-approximation of $\sigma$. Indeed, given a face $F$ not in $R$, we can approximate the hyperplane field $\tau|_F$ by a constant hyperplane field along $F$, see Figure \ref{piecewise constant}. We pay a price, of course, but the error can be made arbitrarily small by taking $N$ sufficiently big. We can therefore reduce the problem at each face $F$ to the local model considered in Theorem \ref{final reformulation}. Observe that the last property stated in Theorem \ref{final reformulation} and the a priori bound on $||dF_t||_{C^0}$ provided by Remark \ref{technical} are needed to show that after each step of the inductive process the approximation of $\tau$ by a piecewise-constant hyperplane field has not been ruined by the corresponding isotopy. To be more precise, the distorsion produced by each isotopy can be made arbitrarily small by taking $N$ sufficiently big. 

\begin{figure}[h]
\includegraphics[scale=0.62]{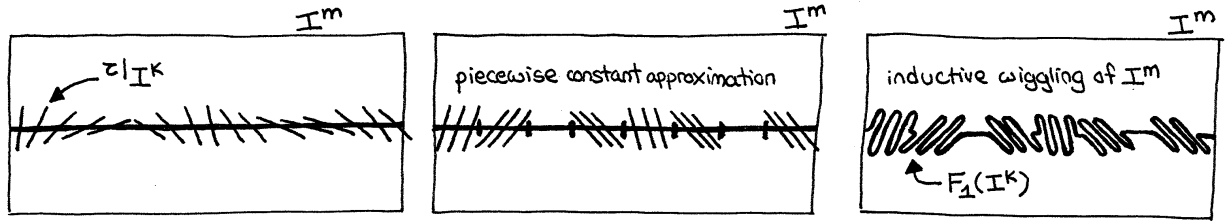}
\caption{Approximation of $\tau$ by a piecewise-constant hyperplane field and the corresponding wiggling.}
\label{piecewise constant}
\end{figure}

We have successfully reduced Theorem \ref{localized holonomic approximation lemma for perp-holonomic sections} to the transverse local model Theorem \ref{final reformulation} above. The same argument also works in families, using Lemma \ref{parametric calculation for almost tangent} instead of Lemma \ref{calculation for almost tangent}. Thus we can also reduce the parametric Theorem \ref{parametric localized holonomic approximation lemma for perp-holonomic sections} to a parametric transverse local model. The only difference in the reduction is that we must also subdivide the parameter space $I^q$, as well as the domain $I^m$, into small enough subcubes. The parametric version of Theorem \ref{final reformulation} reads as follows. 

\begin{theorem}\label{parametric final reformulation}
Let $\sigma_z:I^m \to J^r(\bR^m, \bR^n)$ be a family of sections parametrized by the unit cube $I^q$ such that the following properties hold.
\begin{itemize}
\item $\sigma_z=0$ on $Op(\partial I^m)$.
\item $\sigma^{\perp}_z=0$ on all of $I^m$ with respect to $V$.
\item $\sigma_z=0$ on all of $I^m$ for $z \in Op(\partial I^q)$.
\end{itemize}
Then there exists a family of isotopies $F^z_t:I^m \to I^m$ and a family of holonomic sections $\hat{\sigma}_z :I^m \to J^r(\bR^m, \bR^n)$ such that the following properties hold.
\begin{itemize}
\item $\hat{\sigma}_z$ is $C^0$-close to $\sigma_z$ on $Op\big( F^z_1(I^k) \big)$.
\item $\hat{\sigma}^{\perp}_z$ is $C^0$-small on all of $I^m$.
\item $F^z_t$ is $C^0$-small.
\item $F^z_t=id_{I^m}$ and $\hat{\sigma}_z=0$ on $Op(\partial I^m)$.
\item $F^z_t=id_{I^m}$ and $\hat{\sigma}_z=0$ on all of $I^m$ for $z \in Op(\partial I^q)$.
\item $V$ is invariant under $F^z_t$.
\end{itemize}
\end{theorem}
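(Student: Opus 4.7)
Using the trivialization from Section~\ref{Taylor polynomials}, primitive sections with respect to the constant hyperplane field $V = \ker(dx_1)$, which has co-orientation $u=\partial_1$, correspond bijectively to smooth families of functions $v_z \colon I^m \to \bR^n$: each $\sigma_z(x)$ is the $r$-jet at $x$ of the germ $y \mapsto (y_1 - x_1)^r v_z(x)$, and the hypotheses translate to $v_z = 0$ on $Op(\partial I^m)$ and $v_z \equiv 0$ for $z \in Op(\partial I^q)$. The $\perp$-jet of a smooth function $\hat h_z$ consists of every partial derivative of order at most $r$ except the pure $\partial_1^r \hat h_z$, so the task is to produce $\hat h_z$ with $\partial_1^r \hat h_z(x) \approx r!\, v_z(x)$ at points $x \in F_1^z(I^k)$ while $\hat h_z$ and every other partial derivative of order at most $r$ remain $C^0$-small on all of $I^m$.

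The $V$-invariance of $F_t^z$ is guaranteed by taking $F_t^z(x_1, x') = (x_1, G_t^z(x_1, x'))$ for a family of fiberwise diffeomorphisms $G_t^z$ depending smoothly on $(t,z,x_1)$. The transversality reduction of Section~\ref{reduction to transverse case} places us in the situation where $I^k$ contains the $x_1$-axis, so that the normal directions $\partial_{k+1}, \ldots, \partial_m$ to $I^k$ all lie inside $V$. The wiggling $G_t^z$ oscillates $I^k$ in these normal directions, with small amplitude and large frequency controlled by a single parameter $\varepsilon > 0$, so that $F_1^z(I^k)$ zig-zags through a thin tubular neighborhood of $I^k$ within each $V$-leaf $\{x_1 = c\}$. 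The function $\hat h_z$ is then constructed over this zig-zag neighborhood by a cutoff-weighted Taylor-polynomial interpolation: for $y$ in the neighborhood, $\hat h_z(y)$ is built from the Taylor polynomial $(y_1 - x_1)^r v_z(x)$ evaluated at the footpoint $x \in F_1^z(I^k)$, with all cutoff functions chosen to depend only on the transverse coordinates $(x_2, \ldots, x_m)$ in each $V$-leaf, so that their derivatives do not interact with $\partial_1^r \hat h_z$.

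The principal obstacle is the global $C^0$-bound on $\hat\sigma_z^\perp$ on all of $I^m$, as opposed to merely near $F_1^z(I^k)$. In classical holonomic approximation one localises by a coarse cutoff, but here that would generate large transverse derivatives that wreck the $\perp$-estimate. The remedy is to calibrate the cutoffs and the interpolation so that every $\partial_j$ derivative ($j \geq 2$) of the cutoff is compensated by the small wiggle amplitude $\varepsilon$; verifying this yields $\|\hat\sigma_z^\perp\|_{C^0} = O(\varepsilon)$ and $\text{dist}_{C^0}(\hat\sigma_z, \sigma_z) = O(\varepsilon)$ on $Op(F_1^z(I^k))$ by explicit differentiation and careful bookkeeping of frequencies and amplitudes. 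The parametric and boundary conditions are automatic, since every ingredient is smooth in $z$ and vanishes identically when $v_z$ does; in particular $F_t^z = id_{I^m}$ and $\hat\sigma_z = 0$ for $z \in Op(\partial I^q)$ and near $\partial I^m$, and the $V$-invariance holds by construction.
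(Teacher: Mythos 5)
Your overall architecture coincides with the paper's: reduce to the transverse local model, take a $V$-invariant wiggling $F^z_t(x_1,x')=(x_1,G^z_t(x_1,x'))$ that oscillates $I^k$ inside the leaves of $V$ with high frequency in $x_1$, define the approximating function by evaluating the Taylor polynomials $(y_1-x_1)^r v_z(x)$ at footpoints on $F^z_1(I^k)$, cut off, and estimate; the parametric statement is then obtained by inserting the parameter $z$ everywhere and cutting off the isotopy in $z$ near $\partial I^q$, exactly as you indicate.

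However, the mechanism you give for the crucial global bound on $\hat\sigma^\perp_z$ does not work as stated. You propose cutoff functions ``depending only on the transverse coordinates $(x_2,\dots,x_m)$'' so that their derivatives ``do not interact with $\partial_1^r$''. This is impossible: the glued interpolant $g_z$ is only defined on the wiggled neighborhood $U_z=F^z_1(\{|x_m|<\varepsilon/4\})$, whose position inside each leaf $\{x_1=c\}$ oscillates with $c$ by the full amplitude of the wiggle, so a cutoff that is $\equiv 1$ near $F^z_1(I^k)$ and supported in $U_z$ must depend on $x_1$, and its pure $x_1$-derivatives are unavoidably of size $\delta^{-1}$ per derivative, where $\delta$ is the period of the oscillation. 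The estimate is saved not by decoupling the cutoff from $x_1$ but by Leibniz bookkeeping: each $\partial_1$ landing on the cutoff costs $\delta^{-1}$ but is one fewer $\partial_1$ landing on $g_z$, whose derivatives $\partial_1^I\partial_\gamma g_z$ are $O(\delta^{\,r-I})$ because the argument stays within distance $\delta$ of the footpoint in the $y_1$-direction; every Leibniz term of $\partial_\alpha(\phi\, g_z)$ with fewer than $r$ total $\partial_1$'s then comes out $O(\delta/\varepsilon)$. Relatedly, you need two independent scales --- amplitude $\varepsilon$ and period $\delta$ with $\delta/\varepsilon$ small --- rather than a single parameter, and the compensation for the transverse derivatives of the cutoff (each of size $\varepsilon^{-1}$) comes from the factor $\delta^{\,r}$ in $g_z$, i.e.\ from the short period, not from the small amplitude. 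With these corrections your sketch becomes the paper's proof.
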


We note that there also is an priori bound on $||dF^z_t||_{C^0}$ depending on the desired accuracy of the $C^0$-approximation, just as in Remark \ref{technical}. We have now completed all preparatory reductions.

\section{Holonomic approximation with controlled cutoff}\label{holonomic approximation with controlled cutoff}
\subsection{The transverse local model}\label{transverse local model} We begin by establishing some simple estimates which will be crucial in the quantitative holonomic approximation process described below. We exploit the concreteness of the local models considered in Theorems \ref{final reformulation} and \ref{parametric final reformulation} by writing down the main objects explicitly, differentiating them by hand and thereby deducing the necessary bounds. We once again use $C>0$ to denote a constant, which only depends on $m$ and $r$, but which will be replaced with a bigger such constant whenever necessary.

Consider a section $\sigma:I^m \to J^r(\bR^m, \bR^n)$ such that $\sigma^{\perp}=0$ with respect to the constant hyperplane field $V=0 \times \bR^{m-1}$. In the spirit of Section \ref{Taylor polynomials}, we can give an explicit description of $\sigma$. Each $r$-jet $\sigma(x) \in J^r(\bR^m, \bR^n)$ at a point $x =(x_1, \ldots, x_m) \in I^m$ corresponds to a germ 
\[ y \mapsto h_x(y)=(y_1-x_1)^r \cdot \big( v_1(x), \ldots , v_n(x) \big) \in \bR^n, \qquad y = (y_1, \ldots, y_m)\in Op(x) \subset I^m.\]

\begin{figure}[h]
\includegraphics[scale=0.6]{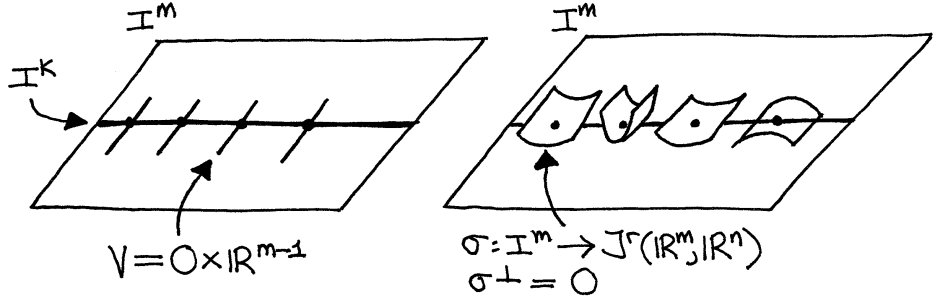}
\caption{A typical section $\sigma$ which is primitive with respect to the hyperplane field $V$.}
\label{V-primitive}
\end{figure}

 If $\sigma=0$ on $Op(\partial I^m)$, then the function $v=(v_1, \ldots, v_n):I^m \to \bR^n$ also satisfies $v=0$ on $Op(\partial I^m)$. We must control the derivatives of the function $h(x,y)=h_x(y)$ with respect to both $x$ and $y$. For this purpose, let $\alpha$ and $\beta$ be multi-indices such that $|\alpha|+ |\beta| \leq r$. Write $\alpha=(\xi,\gamma)$ where $\xi$ consists of $M$ indices $\xi_j=1$ and $\gamma$ consists only of indices $1 <\gamma_j  \leq m$. Similarly, write $\beta=(\zeta, \mu)$, where $\zeta$ consists of $N$ indices $\zeta_j=1$ and $\mu$ consists only of indices $1 <\mu_j \leq m$. We compute the following formula for the derivatives of $h(x,y). $
\[ \frac{\partial}{\partial x_\alpha} \frac{\partial}{\partial y_\beta} h|_{(x,y)} = \sum_{j=1}^M  \sum_{i=1}^{N}{ M \choose j} { N \choose i} (-1)^j \frac{r!}{(r-j-i)!}(y_1-x_1)^{r-j-i} \cdot   \big(\partial^{M-j-i}_1 \partial_\mu \partial_\gamma \, v \big)  (x).\]

We therefore obtain the estimate
\[ || \frac{\partial}{\partial x_\alpha} \frac{\partial}{\partial y_\beta} h || \leq  C \, || \sigma ||_{C^r} \, \delta^{r-(M+N)} \quad \text{for} \quad y \in \{ |y_1-x_1| < \delta \} \subset I^m. \]

\begin{figure}[h]
\includegraphics[scale=0.5]{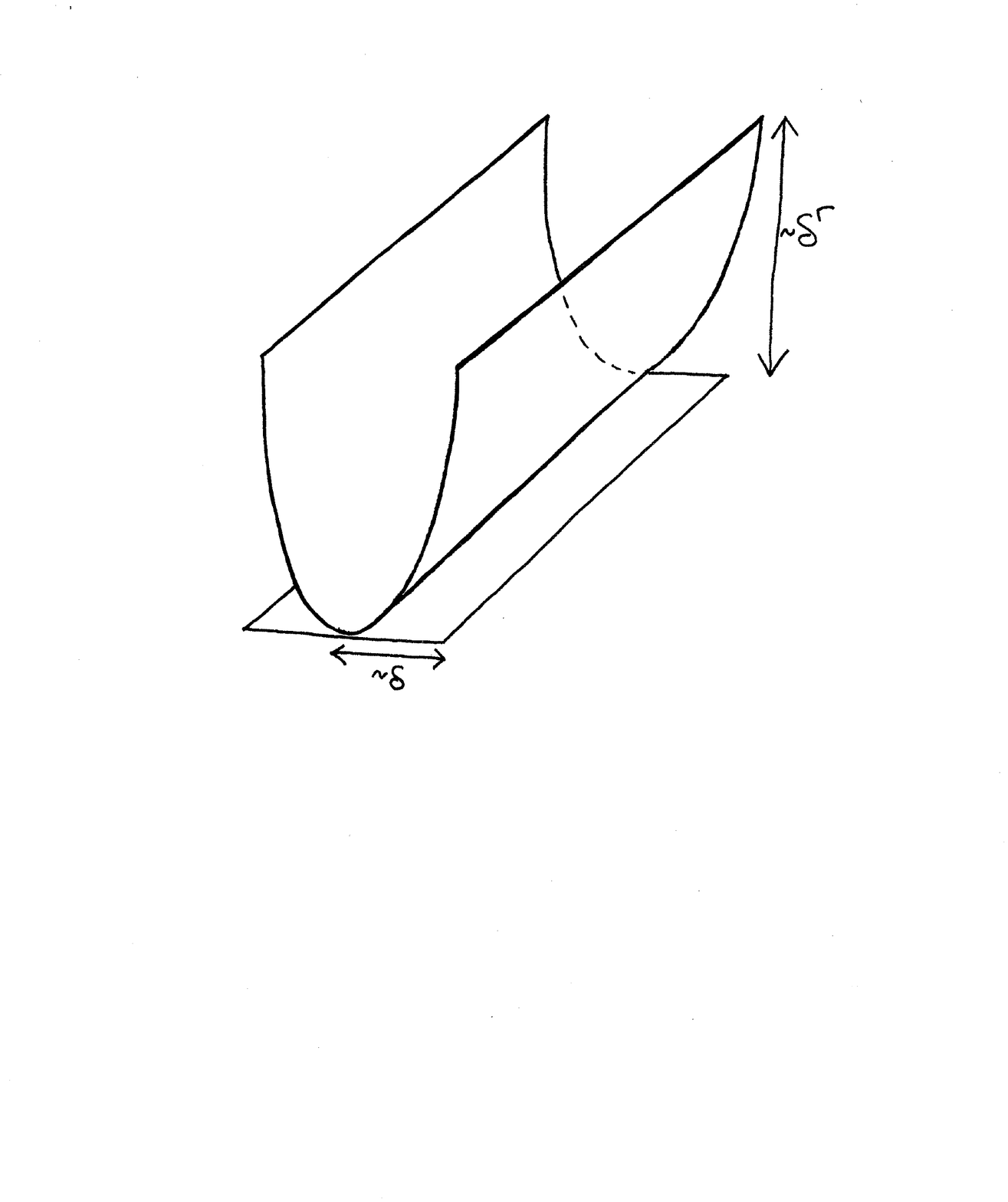}
\caption{Example: the estimate $|| h || \leq C || \sigma ||_{C^3} \delta^r$ .}
\label{estimate}
\end{figure}

We next give a local model for the wiggling. Many other choices work just as well, of course, but we want to write down an explicit model for concreteness. Given $\varepsilon, \delta>0$ small such that $\delta<< \varepsilon$ to an extent which will be made precise later, consider the sinusoidal curve
\[ w(u)= \frac{\varepsilon}{2} \,\sin\big(  \frac{ \pi u }{ 2 \delta} \big) , \qquad u \in \bR . \]

Fix a cutoff function $\psi:\bR \to \bR$ such that $\psi(u)=0$ for $|u|<1/2$ and $\psi(u)=1$ for $|u|>3/4$. For $\varepsilon>0$ small enough so that $\text{supp}(\sigma) \subset [-1+\varepsilon, 1-\varepsilon]^m$, define an isotopy $F_t:I^m \to I^m$ by the formula
\[ F_t(x_1, \ldots, x_m)=\big(x_1, \ldots, x_m+\varphi_t(x) \big), \qquad \varphi_t(x)=t \, \psi\left(\frac{1-|x_1|}{\varepsilon}\right) \cdots \,  \psi \left(\frac{1-|x_m|}{\varepsilon}\right) \, w(x_1). \]

\begin{figure}[h]
\includegraphics[scale=0.6]{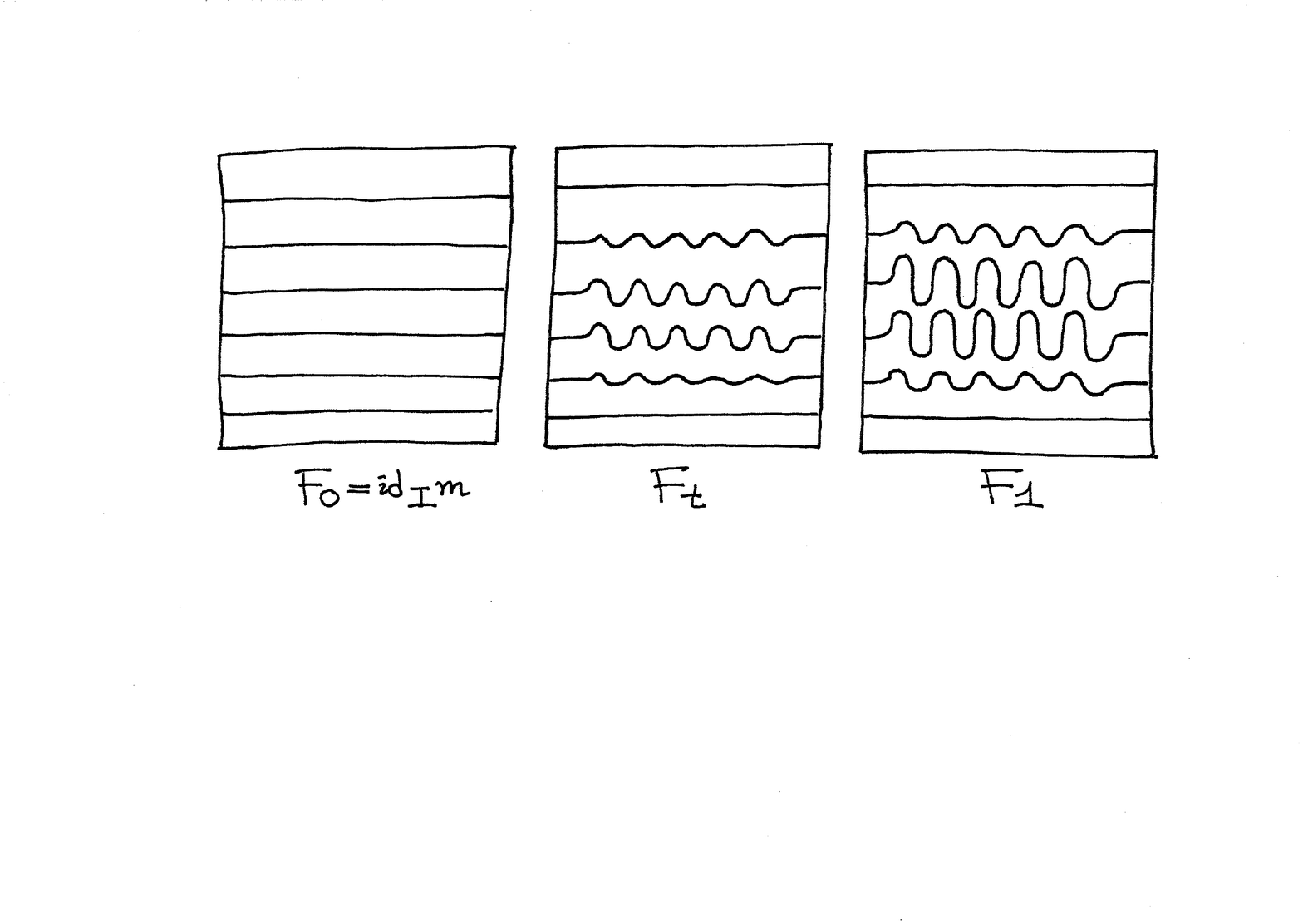}
\caption{The isotopy $F_t$.}
\label{holonomic approximation}
\end{figure}

Observe that $\text{dist}_{C^0}(F_t,id_{I^m})< \varepsilon$. We also have the following estimate for the derivative $dF_t$ of the isotopy $F_t$, where we note that the ratio $\varepsilon/\delta$ will typically be very big but remains invariant by a simultaneous scaling of $\varepsilon$ and $\delta$.
\[ ||dF_t||_{C^0} \leq C \, \, \frac{ \varepsilon }{ \delta } .\]

\begin{figure}[h]
\includegraphics[scale=0.6]{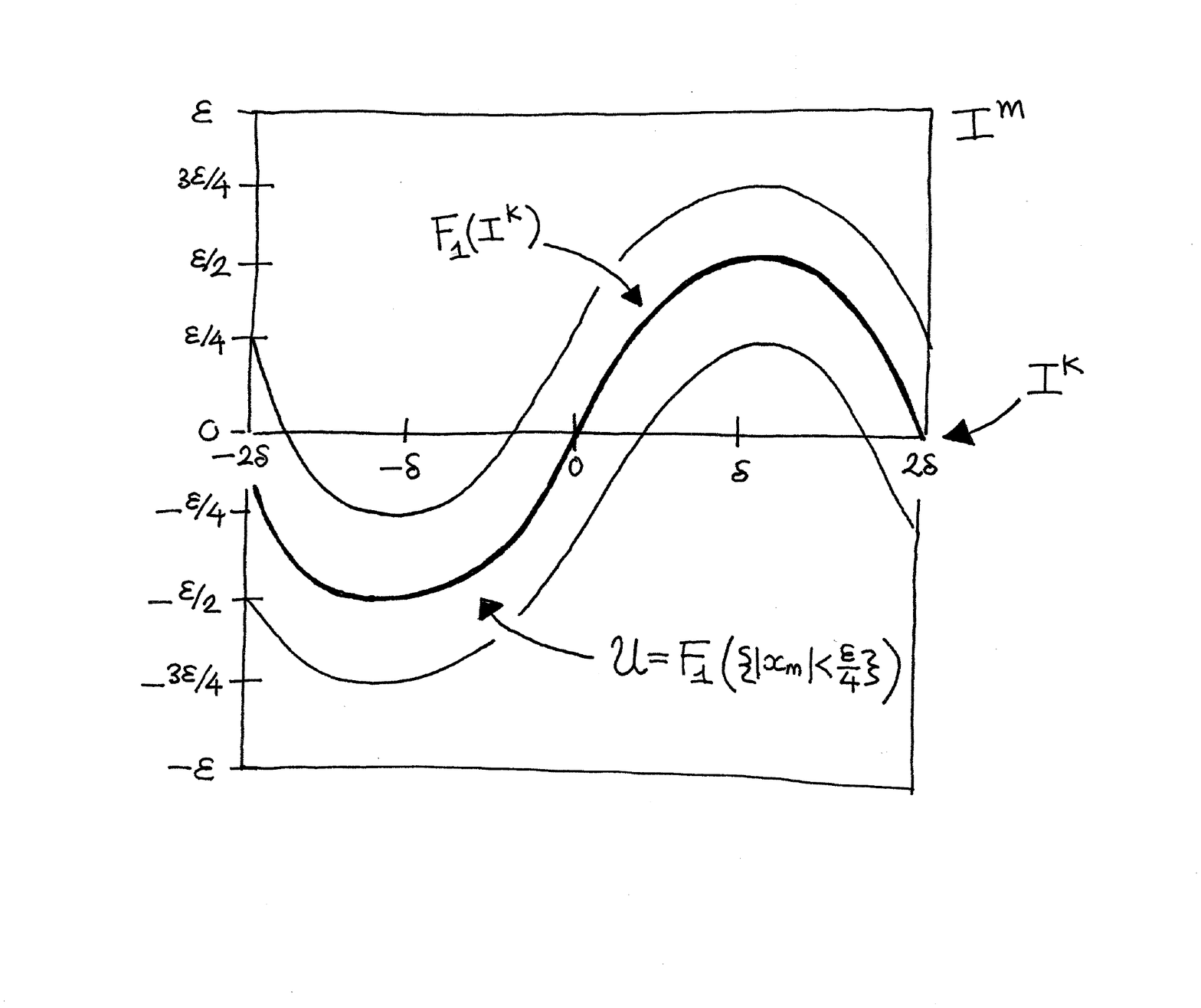}
\caption{The sinusoidal subset $U \subset I^m$ in a neighborhood of the origin.}
\label{sinusoidal}
\end{figure}

In Section \ref{holonomic approximation process} below we apply the same method of proof as Eliashberg and Mishachev in \cite{EM02}  to produce a holonomic approximation $j^r(g)$ of the section $\sigma$. The domain of definition of the function $g$ is the wiggled neighborhood of $I^k \subset I^m$ given by $U=F_t\big( \{ |x_m|<\varepsilon/4 \}\big) \subset I^m$. To extend our holonomic approximation to the whole of $I^m$, we multiply the function $g:U \to \bR^n$ by a cutoff function supported in $U$. We must control the derivatives of such a cutoff function, so we now write down an explicit model together with the appropriate estimate. 

 In terms of the function $\psi$ fixed above, let $\phi: I^m \to \bR$ be given by $\phi(x)=1-\psi\big(4|y_m|/\varepsilon\big)$, where $F_1(y)=x$. Note that $\phi=1$ near $F_1(I^k)$ and that $\text{supp}(\phi) \subset U$, see Figure \ref{cutoff}. The following bound holds for the derivatives of $\phi$. Let $\alpha$ be a multi-index of order $|\alpha| \leq r$. Write $\alpha=(\beta, \gamma)$, where $\beta$ consists of $N$ indices $1 < \beta_j \leq m$ and $\gamma$ consists of $M$ indices $\gamma_j=1$. Then we have
\[ | \partial_\alpha \phi | \leq C \, \frac{1}{ \varepsilon^N \delta^M} .  \]

\begin{figure}[h]
\includegraphics[scale=0.6]{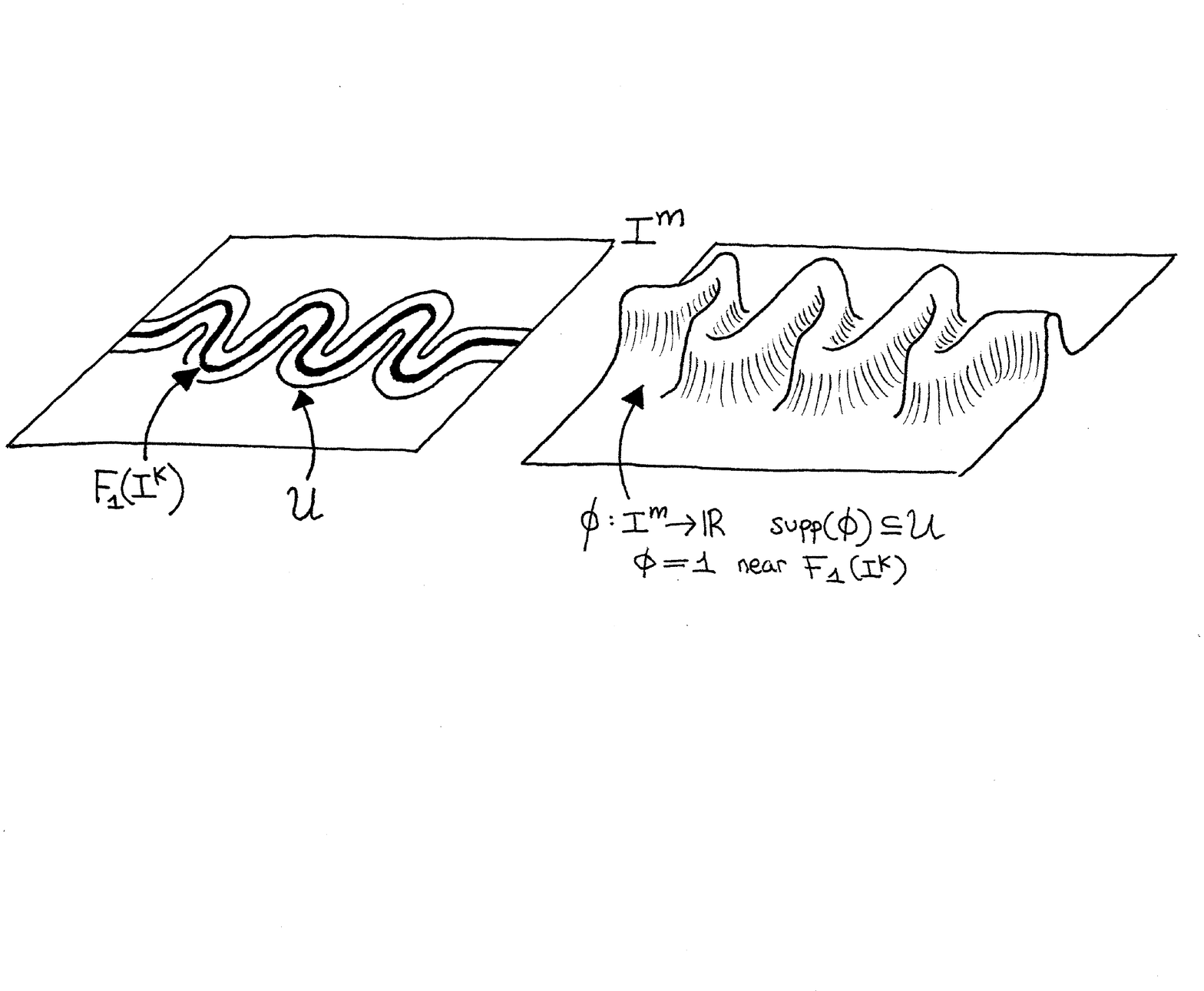}
\caption{The cutoff function $\phi$.}
\label{cutoff}
\end{figure}

\subsection{The holonomic approximation process}\label{holonomic approximation process}
We are ready to construct the holonomic approximation $\hat{\sigma}$ of $\sigma$. We use the isotopy $F_t:I^m \to I^m$ and the cutoff function $\phi:I^m \to \bR$ defined in Section \ref{transverse local model}, which depend on two parameters $\varepsilon$ and $\delta$. We will obtain an arbitrarily good $C^0$-approximation $\hat{\sigma}$ by choosing $\varepsilon,\delta>0$ arbitrarily small such that the ratio $\delta / \varepsilon$ is also arbitrarily small. Fix a function $\eta:\bR \to \bR$ such that 
\begin{itemize}
\item $\eta(u)=-1$ for $u<-1$, 
\item $1 \leq \eta(u) \leq 1$ for $-1 \leq u \leq 1$, 
\item $\eta(u)=1$ for $u >1$.
\end{itemize}
 We construct $\hat{\sigma}$ by writing down an explicit formula on each of the rectangles 
\[ R_j= \big[(2j-1)\delta,(2j+1)\delta\big] \times I^{m-2}\times[-\varepsilon,\varepsilon ] \subset I^m\]
such that $R_j$ is contained in the support of $\sigma$. Suppose first that $j \in \bN$ is even. Define a function $g:R_j \to \bR^n$ by
\[ g(x)=h\big(p(x),x\big), \qquad \text{where} \quad p(x)=\big((2j\delta)+\delta \eta(4x_m/\varepsilon),x_2, \ldots, x_{m-1},0 \big), \qquad x=(x_1, \ldots , x_m) \in R_j.\]

Let $b(u)=(2j\delta)+ \delta \eta(4u/ \varepsilon)$, so that $p(x)=\big(b(x_m),x_2, \ldots, x_{m-1},0\big)$. We note for future reference the following bound on the derivatives of the function $b$.
\[ | b^{(i)}| \leq C \, \frac{\delta }{\varepsilon^i}. \]

\begin{figure}[h]
\includegraphics[scale=0.6]{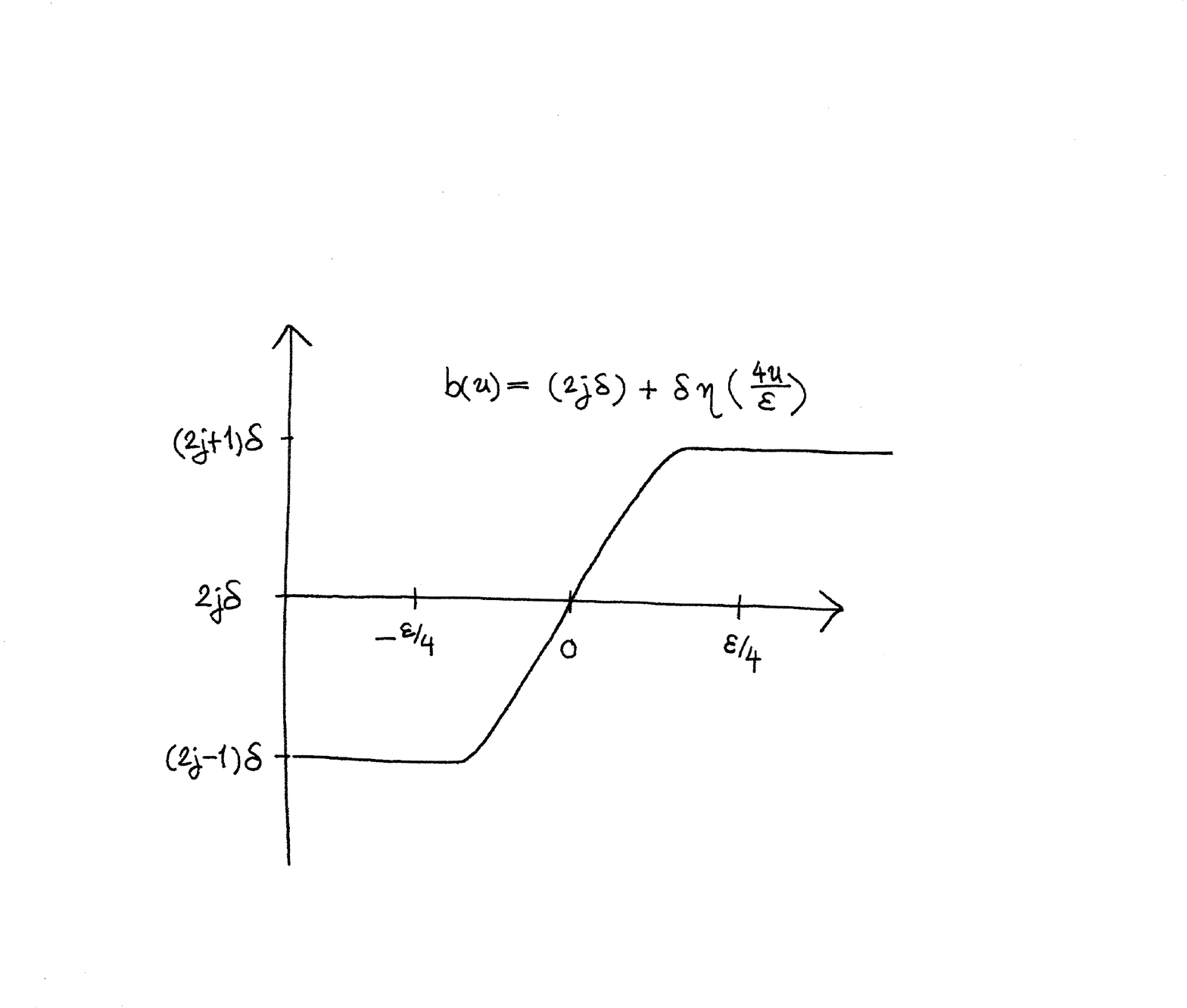}
\caption{The basepoint-interpolating function $b$.}
\label{gluing}
\end{figure}

\begin{remark}Observe that:
\begin{enumerate}
\item On $R_j \cap \{ x_m<-\varepsilon/4 \}$ we have $p(x)=\big( (2j-1)\delta, x_2, \ldots, x_{m-1},0 \big)$
\item On $R_j \cap \{ x_m> \varepsilon/4\}$ we have $p(x)=\big( (2j+1)\delta, x_2, \ldots, x_{m-1},0 \big)$.
\item On $R_j \cap \{|x_m|<\varepsilon/4\}$ the point $p(x)$ interpolates between $\big( (2j-1)\delta, x_2, \ldots, x_{m-1},0 \big)$ and $\big( (2j+1)\delta, x_2, \ldots, x_{m-1},0 \big)$.
\end{enumerate}

Similarly, if $j \in \bN$ is odd, we define a function $g:R_j \to \bR^n$ by
\[ g(x)=h\big(p(x),x\big), \qquad \text{where} \quad p(x)=\big((2j\delta)-\delta \eta(4x_m/\varepsilon),x_2, \ldots, x_{m-1},0 \big), \qquad x=(x_1, \ldots , x_m) \in R_j.\]

The difference in the sign corresponds to the fact that on the interval $\big[(2j-1)\pi/2, (2j+1)\pi/2\big]$ the function $u \mapsto \sin(u)$ is increasing for $j$ even and decreasing for $j$ odd. Note that the locally defined functions $g:R_j \to \bR^n$ do not glue together on $\{|x_m|<\varepsilon\}=\bigcup_j R_j \subset I^m$. However, they do glue together on the wiggled neighborhood $U=F_1\big( \{ |x_m|<\varepsilon/4 \} \big)$ of $F_1(I^k)$, see Figure \ref{gluing}. We therefore obtain a globally defined function $g:U \to \bR^n$.  

\begin{figure}[h]
\includegraphics[scale=0.6]{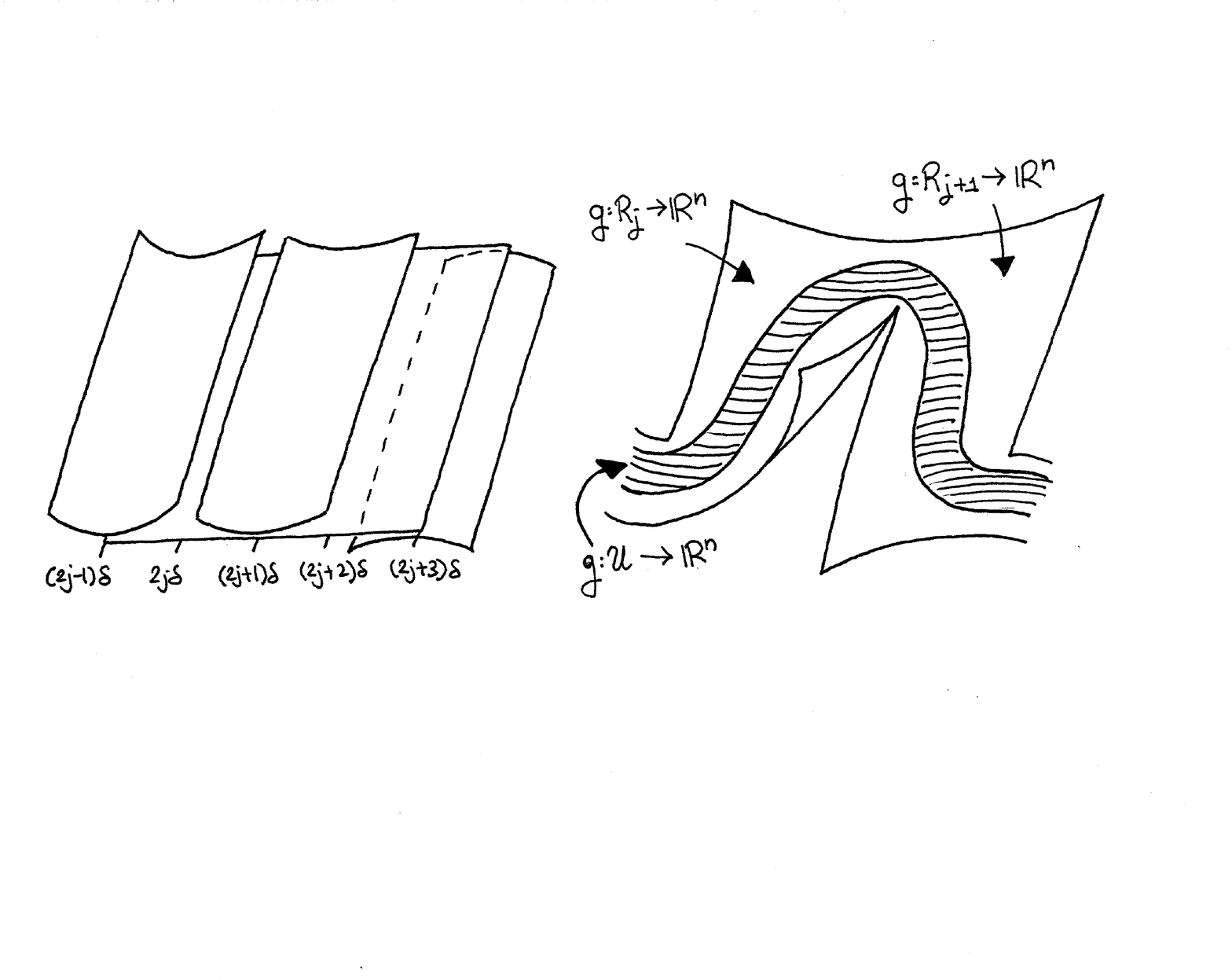}
\caption{The glued up function $g:U \to \bR^n$.}
\label{gluing}
\end{figure}

Set $f(x) = \phi(x) \cdot g(x)$. Since $\text{supp}(\phi) \subset U$, the function $f:I^m \to \bR$ is defined on all of $I^m$. The holonomic section $\hat{\sigma}=j^r(f)$ is the desired holonomic approximation to $\sigma$. It remains to prove that for an adequate choice of the parameters $\varepsilon$ and $\delta$ all of the properties listed in Theorem \ref{final reformulation} are satisfied. The last three properties can be verified by inspection. In the next section we carry out the calculation required to establish the other two.
\end{remark}

\subsection{Conclusion of the proof}\label{conclusion of the proof}  Let $\alpha$ be a multi-index of order $|\alpha| \leq r$. Write $\alpha=(\beta, \gamma , \xi)$, where $\beta$ consists of $I$ indices $\beta_j=1$, $\gamma$ consists of $J$ indices $1 < \gamma_j <m$ and $\xi$ consists of $K$ indices $\xi_j=m$. Then we compute 
\[ \frac{\partial }{\partial x_\alpha}g|_x=\sum \big( \frac{\partial^i}{\partial x_1^i} \frac{\partial}{\partial x_\gamma} \frac{\partial^I}{\partial y_1^I} \frac{\partial^j}{\partial y_m^j}  h|_{( p(x),x ) }   \big) b^{(k_1)}(x_m) \cdots b^{(k_i)}(x_m),\]
where the sum is taken over all non-negative integers such that $j+k_1 + \cdots + k_i=K$. From the estimates established in Section \ref{transverse local model} we deduce that
\[ || \frac{\partial}{\partial x_\alpha} g || \leq C \, || \sigma ||_{C^r}  \sum  \delta^{r-(i+I)} \big(\frac{ \delta }{ \varepsilon^{k_1}} \big) \cdots\big(\frac{ \delta }{ \varepsilon^{k_i}} \big)= C \,  || \sigma ||_{C^r} \sum \, \frac{\delta^{r-I}}{\varepsilon^{k_1+ \cdots + k_i}} \leq C \, || \sigma ||_{C^r} \frac{\delta^{r-I}}{\varepsilon^K } .\] 

Suppose first that $\alpha$ is a multi-index such that $I<r$. Observe that $\delta^{r-I} \leq \delta^{K +J} \leq \delta^K$. It follows that we have
\[ || \frac{\partial}{\partial x_{\alpha}}g || \leq   C\, || \sigma ||_{C^r} \frac{ \delta }{ \varepsilon} \]
and therefore we can make this derivative arbitrarily small by ensuring that the ratio $\delta / \varepsilon$ is arbitrarily small. When $I=r$, the multi-index $\alpha=(1, \ldots , 1)$ corresponds to the pure $r$-th derivative $\partial^r_1$. Observe that in this case the sum collapses to 
\[ \frac{\partial^r}{\partial x_1^r }g |_x = \frac{\partial^r}{\partial y_1^r}h |_{( p(x), x )}. \]
Since $||x-p(x)|| \leq C \, \varepsilon$, it follows that
\[ || \frac{\partial^r}{\partial y_1^r}h|_{(p(x),x)} - \frac{\partial^r}{\partial y_1^r }h|_{(x,x)} || \leq C \, || \sigma ||_{C^r} \varepsilon. \]
Hence we deduce the inequality
\[ \text{dist}_{C^0}(\hat{\sigma}, \sigma ) \leq C \, || \sigma ||_{C^r} \, \big(\varepsilon + \frac{ \delta }{ \varepsilon } \big)   \]
on the smaller neighborhood of $F_1(I^k)$ in $U$ where $\phi=1$, so that $f=g$ and $\hat{\sigma}=j^r(g)$. This proves that the $C^0$-approximation can be made arbitrarily accurate by choosing $\varepsilon, \delta>0$ arbitrarily small such that the ratio $\delta / \varepsilon$ is also arbitrarily small. 

It remains to show that for such a choice of $\varepsilon$ and $\delta$ we can also ensure that $\hat{\sigma}^{\perp}$ is small on all of $I^m$. We again must explicitly compute some derivatives. Let $\alpha$ be a multi-index of order $|\alpha| \leq r$. Then we have 
\[ \frac{\partial}{\partial x_\alpha} f = \sum_{\beta + \gamma = \alpha}( \frac{\partial}{\partial x_\beta} \phi )( \frac{\partial}{\partial x_\gamma} g ). \]

Suppose that $\alpha_j=1$ for exactly $N$ indices, where $N \leq r$. Invoking the estimates established in Section \ref{transverse local model} we deduce that 
\[ || \frac{\partial }{\partial x_\alpha} f ||  \leq C\, || \sigma ||_{C^r}\Big( \,  \frac{\delta}{\varepsilon}\,  \Big)^{r-N} . \]
It follows that with the exception of the case $N=r$ we have 
\[ || \frac{\partial }{\partial x_\alpha} f ||  \leq C\,||\sigma||_{C^r} \, \frac{\delta}{\varepsilon} . \]
Since the $\perp$-jet $\hat{\sigma}^{\perp}$ consists of all derivatives $\partial_\alpha f$ for multi-indices $\alpha$ of order $|\alpha| \leq r$ such that $N<r$, we obtain the inequality 
\[ ||\hat{ \sigma}^{\perp} ||_{C^0} \leq C \, || \sigma ||_{C^r} \, \frac{ \delta }{ \varepsilon }. \]

This concludes the proof of Theorem \ref{final reformulation}.

\subsection{The parametric case} The above calculation also works in families, by adding a parameter everywhere. We spell out the details for completeness. Let $\sigma_z:I^m \to J^r(\bR^m, \bR^n)$ be a family of sections parametrized by the unit cube $I^q$ such that $\sigma_z^{\perp}=0$ with respect to $V=0 \times \bR^{m-1}$, such that $\sigma_z=0$ on $Op(\partial I^m)$ and such that $\sigma_z=0$ on all of $I^m$ for $z \in Op(\partial I^q)$. We  think of $\sigma_z$ as a family of germs $(z,x,y) \mapsto h(z,x,y) \in \bR^m $, where $x \in I^m$, $y \in Op(x)$ and $z \in I^q$.  

We can define a family of functions $g_z$ as before by setting $g_z(x)=h\big(z,p(x),z\big)$ on each rectangle $R_j$. The domain of definition of $g_z$ is $U_z=F^z_1(\{|x_m|<\varepsilon/4\}) \subset I^m$ , where for each $z=(z_1, \ldots , z_q) \in I^q$ we have an isotopy $F^z_t:I^m \to I^m$ given by
\[ F^z_t(x_1, \ldots, x_m)=\big(x_1, \ldots, x_m+\varphi^z_t(x) \big), \] \[ \varphi^z_t(x)=t \, \psi\left(\frac{1-|x_1|}{\varepsilon}\right) \cdots \,  \psi \left(\frac{1-|x_m|}{\varepsilon}\right) \,  \psi \left(\frac{1-|z_1|}{\varepsilon}\right) \cdots  \psi \left(\frac{1-|z_q|}{\varepsilon}\right) \, w(x_1).  \] 

We use the same functions $\psi$ and $w$ as in the non-parametric case. We also use the corresponding family of cutoff functions $\phi_z:I^m \to \bR$, satisfying $\phi_z=1$ near $F^z_1(I^k)$ and $\text{supp}(\phi_z) \subset  U_z$, which are given by
\[  \phi_z(x)=1-\psi(4|y_m|/\varepsilon), \qquad F^z_1(x)=y.\]

Set $f_z(x)=\phi_z(x) \cdot g_z(x)$ and $\hat{\sigma}_z=j^r(f_z)$ to obtain the desired holonomic approximation. The computation carried out in Section \ref{holonomic approximation process} shows that by taking $\varepsilon$ and $\delta $ arbitrarily small such that the ratio $\delta / \varepsilon$ is also arbitrarily small, all of the properties stated in Theorem \ref{parametric final reformulation} are satisfied.

\end{document}